\providecommand{\makenomenclature}{\makeglossary}
\providecommand{\tabularnewline}{\\}
\def\RSthmtxt{theorem~}\newref{thm}{name = \RSthmtxt}}
\def\RSlemtxt{lemma~}\newref{lem}{name = \RSlemtxt}}
\theoremstyle{plain}
\newtheorem{thm}{\protect\theoremname}
\theoremstyle{remark}
\newtheorem{rem}[thm]{\protect\remarkname}
\theoremstyle{definition}
\newtheorem{defn}[thm]{\protect\definitionname}
\theoremstyle{plain}
\newtheorem{fact}[thm]{\protect\factname}
\theoremstyle{plain}
\newtheorem{lem}[thm]{\protect\lemmaname}
\theoremstyle{remark}
\newtheorem*{rem*}{\protect\remarkname}
\theoremstyle{plain}
\newtheorem{cor}[thm]{\protect\corollaryname}
\theoremstyle{remark}
\newtheorem{claim}[thm]{\protect\claimname}
\DeclareMathOperator{\vol}{vol}
\let\div\relax
\DeclareMathOperator{\div}{div}
\DeclareMathOperator{\Div}{div}
\DeclareMathOperator{\grad}{grad}
\DeclareMathOperator{\colim}{colim}
\DeclareMathOperator{\supp}{supp}
\DeclareMathOperator{\avg}{avg}
\def\XXint#1#2#3{{\setbox0=\hbox{$#1{#2#3}{\int}$ }
\vcenter{\hbox{$#2#3$ }}\kern-.6\wd0}}
    \let\old@@@nomenclature=\@@@nomenclature        
        \newcounter{@nomcount} \setcounter{@nomcount}{0}%
        \renewcommand\the@nomcount{\two@digits{\value{@nomcount}}}
        \def\@@@nomenclature[#1]#2#3{
          \addtocounter{@nomcount}{1}%
        \def\@tempa{#2}\def\@tempb{#3}%
          \protected@write\@nomenclaturefile{}%
          {\string\nomenclatureentry{\the@nomcount\nom@verb\@tempa @[{\nom@verb\@tempa}]%
          \begingroup\nom@verb\@tempb\protect\nomeqref{\theequation}%
          |nompageref}{\thepage}}%
          \endgroup
          \@esphack}%
\newcommand\iso{\xrightarrow{
   \,\smash{\raisebox{-0.65ex}{\ensuremath{\scriptstyle\sim}}}\,}}
\newcommand\restr[2]{{
  \left.\kern-\nulldelimiterspace 
  #1 
  \vphantom{\big|} 
  \right|_{#2} 
  }}
\providecommand{\claimname}{Claim}
\providecommand{\corollaryname}{Corollary}
\providecommand{\definitionname}{Definition}
\providecommand{\factname}{Fact}
\providecommand{\lemmaname}{Lemma}
\providecommand{\remarkname}{Remark}
\providecommand{\theoremname}{Theorem}
\begin{document}
\title{Construction of the Hodge-Neumann heat kernel, local Bernstein estimates,
and Onsager's conjecture in fluid dynamics}
\author{Khang Manh Huynh}
\maketitle
\begin{abstract}
Most recently, in \cite{huynhHodgetheoreticAnalysisManifolds2019},
we introduced the theory of heatable currents and proved Onsager's
conjecture on Riemannian manifolds with boundary, where the weak solution
has $B_{3,1}^{\frac{1}{3}}$ spatial regularity. In this sequel, by
applying techniques from geometric microlocal analysis to construct
the Hodge-Neumann heat kernel, we obtain off-diagonal decay and local
Bernstein estimates, and then use them to extend the result to the
Besov space $\widehat{B}_{3,V}^{\frac{1}{3}}$, which generalizes
both the space $\widehat{B}_{3,c(\mathbb{N})}^{1/3}$ from \parencite{isettHeatFlowApproach2014}
and the space $\underline{B}_{3,\text{VMO}}^{1/3}$ from \parencite{bardosOnsagerConjectureBounded2019,nguyenEnergyConservationInhomogeneous2020}
--- the best known function space where Onsager's conjecture holds
on flat backgrounds. 
\end{abstract}

\paragraph*{Acknowledgments}

The author is grateful to Terence Tao, Pierre Albin, Daniel Grieser,
András Vasy and Rafe Mazzeo for valuable discussions during the preparation
of this work.

\tableofcontents{}

\section{Introduction}

Recall the incompressible Euler equation in fluid dynamics:

\begin{equation}
\left\{ \begin{array}{rll}
\partial_{t}\mathcal{V}+\Div\left(\mathcal{V}\otimes\mathcal{V}\right) & =-\grad\mathfrak{p} & \text{ in }M\\
\Div\mathcal{V} & =0 & \text{ in }M\\
\left\langle \mathcal{V},\nu\right\rangle  & =0 & \text{ on }\partial M
\end{array}\right.\label{eq:Euler}
\end{equation}

where $\left\{ %
\begin{tabular}{l}
$(M,g)$ is an oriented, compact smooth Riemannian manifold with boundary,
dimension $\geq2$\tabularnewline
$\nu$ is the outwards unit normal vector field on $\partial M$.\tabularnewline
$I\subset\mathbb{R}$ is an open interval, $\mathcal{V}:I\to\mathfrak{X}M$,
$\mathfrak{p}:I\times M\to\mathbb{R}$.\tabularnewline
\end{tabular}\right.$

We observe that the Neumann condition $\left\langle \mathcal{V},\nu\right\rangle =0$
means $\mathcal{V}\in\mathfrak{X}_{N}$, where $\mathfrak{X}_{N}$
is the set of vector fields which are tangent to the boundary. 

The last two conditions can also be rewritten as $\mathcal{V}=\mathbb{P}\mathcal{V}$,
where $\mathbb{P}$ is the Leray projection operator.

Roughly speaking, Onsager's conjecture says that the energy $\left\Vert \mathcal{V}(t,\cdot)\right\Vert _{L^{2}(M)}$
is a.e. constant in time when $\mathcal{V}$ is a weak solution whose
regularity is at least $\frac{1}{3}$. Making that statement precise
is part of the challenge.

In the boundaryless case, the ``positive direction'' (conservation
when regularity is at least $\frac{1}{3})$ has been known for a long
time \parencite{eyinkEnergyDissipationViscosity1994,constantinOnsagerConjectureEnergy1994,cheskidovEnergyConservationOnsager2008}.
The ``negative direction'' (failure of energy conservation when
regularity is less than $\frac{1}{3}$) is substantially harder \parencite{delellisDissipativeEulerFlows2014,delellisDissipativeContinuousEuler2013},
and was finally settled by Isett in his seminal paper \cite{isettProofOnsagerConjecture2018}
(see the survey in \parencite{delellisTurbulenceGeometryNash2019}
for more details and references).

Since then more attention has been directed towards the case with
boundary on flat backgrounds \parencite{bardosOnsagerConjectureIncompressible2018,drivasOnsagerConjectureAnomalous2018,bardosExtensionOnsagerConjecture2019,bardosOnsagerConjecturePhysical2019,nguyenOnsagerConjectureEnergy2019,bardosOnsagerConjectureBounded2019}.
The case of manifolds without boundary was first handled via a heat-flow
approach in \parencite{isettHeatFlowApproach2014}. This inspired
the consideration of manifolds with boundary in \cite{huynhHodgetheoreticAnalysisManifolds2019},
with the weak solution lying in $L_{t}^{3}B_{3,1}^{\frac{1}{3}}$,
the largest space in which the trace theorem applies. However, the
best results on flat backgrounds hold in the slightly bigger space
$L_{t}^{3}\underline{B}_{3,\text{VMO}}^{\frac{1}{3}}$, so this sequel
aims to make that improvement.

In essence, the absolute Neumann heat flow, created via functional
analysis, is a replacement for the usual convolution on flat spaces,
with special properties like commutativity with divergence. However,
obtaining a pointwise profile of heat kernels for differential forms
(let alone their derivatives) is a difficult problem, so it was hard
to reconcile the heat-flow approach with local-type convolution arguments
on flat backgrounds. Even the definition of $\underline{B}_{3,\text{VMO}}^{\frac{1}{3}}$
itself is local, and it was not immediately obvious that the heat-flow
approach could handle such function spaces.

The solution to this is a manual construction of the Hodge-Neumann
heat kernel (\Secref{Construction-of-heat-kernel}), using techniques
from microlocal analysis and index theory (in particular, Richard
Melrose's calculus on manifolds with corners \parencite{melroseAtiyahPatodiSingerIndexTheorem2018,melroseCalculusConormalDistributions1992}).
The theory mimics the development of pseudodifferential operators,
in creating a filtered algebra that quantifies how ``nonsingular''
an operator is as we approach the edges. In particular, much like
the pseudolocality of {\textPsi}DOs, the construction yields a precise
description near the diagonal, as well as rapid decay away from the
diagonal. This enables the use of the heat flow as local convolution,
and we obtain local Bernstein estimates which allow us to handle VMO-type
function spaces.

The addition of local Besov-type estimates also marks another stage
of development for the theory of \textbf{intrinsic harmonic analysis}
for differential forms (including scalar functions and vector fields)
on compact Riemannian manifolds with boundary, originally set forth
in the prequel with Hodge theory as the foundation. In particular,
we have extended the notion of tempered distributions, and the methods
of Littlewood-Paley frequency decomposition (e.g. Bernstein-type estimates),
which have proved useful on flat backgrounds for problems in fluid
dynamics and dispersive PDEs (cf. \parencite{taoQuantitativeFormulationGlobal2009,taoLocalisationCompactnessProperties2013,taoNonlinearDispersiveEquations2006,lemarie-rieussetRecentDevelopmentsNavierStokes2002}),
to manifolds with boundary. More history and references can be found
in \parencite{huynhHodgetheoreticAnalysisManifolds2019}.

\subsection{Main result}

To state the main result, we need some terminology. 

The standard Besov spaces $B_{p,q}^{s}$, and the absolute Neumann
heat semigroup $e^{s\widetilde{\Delta_{N}}}$ were discussed in \cite{huynhHodgetheoreticAnalysisManifolds2019}.

For $r>0$, we define $M_{>r}:=\left\{ x\in M:\mathrm{dist}(x,\partial M)>r\right\} $.
Let $\accentset{\circ}{M}$ denote the interior of $M$.

For $p\in\left(1,\infty\right)$, we say $X\in\widehat{B}_{p,V}^{1/p}\mathfrak{X}\left(M\right)$
if $X\in L^{p}\mathfrak{X}\left(M\right)$ and $\forall r>0:$
\[
\left(\frac{1}{\sqrt{s}}\right)^{\frac{1}{p}}\left\Vert X-e^{s\widetilde{\Delta_{N}}}X\right\Vert _{L^{p}\left(M_{>r}\right)}\xrightarrow{s\to0}0
\]
Or equivalently (by \Corref{equiv_cN}), $\left(\sqrt{s}\right)^{1-\frac{1}{p}}\left\Vert e^{s\widetilde{\Delta_{N}}}X\right\Vert _{W^{1,p}\left(M_{>r}\right)}\xrightarrow{s\to0}0$

Similarly, for $p\in\left(1,\infty\right)$, we say $\mathcal{X}\in L_{t}^{p}\widehat{B}_{p,V}^{1/p}\mathfrak{X}\left(M\right)$
if $\mathcal{X}\in L_{t}^{p}L^{p}\mathfrak{X}\left(M\right)$ and
$\forall r>0:$ 
\[
\left(\frac{1}{\sqrt{s}}\right)^{\frac{1}{p}}\left\Vert \mathcal{X}-e^{s\widetilde{\Delta_{N}}}\mathcal{X}\right\Vert _{L_{t}^{p}L^{p}\left(M_{>r}\right)}\xrightarrow{s\to0}0
\]

As shown in \Lemref{equal_isett}, $\widehat{B}_{3,V}^{\frac{1}{3}}$
contains the space $\widehat{B}_{3,c(\mathbb{N})}^{1/3}$ from \parencite{isettHeatFlowApproach2014}
(with equality when $\partial M=\emptyset$). While on flat backgrounds,
by \Thmref{contain_VMO}, $\widehat{B}_{3,V}^{1/3}$ coincides with
$\underline{B}_{3,\text{VMO}}^{\frac{1}{3}}$ from \parencite{bardosOnsagerConjectureBounded2019,nguyenEnergyConservationInhomogeneous2020,wiedemannConservedQuantitiesRegularity2020}.

Let $\mathfrak{X}_{00}$ be the space of smooth vector fields compactly
supported in the interior of $M$. We say $\left(\mathcal{V},\mathfrak{p}\right)$
is a \textbf{weak solution }to the Euler equation when
\begin{itemize}
\item $\mathcal{V}\in L_{\mathrm{}}^{3}\left(I,\mathbb{P}L^{3}\mathfrak{X}\right)$,
$\mathfrak{p}\in L^{\frac{3}{2}}(I,H^{-\beta}\left(M\right))$ for
any $\beta\in\mathbb{N}_{0}$
\item $\forall\mathcal{X}\in C_{c}^{\infty}\left(I,\mathfrak{X}_{00}\right):$$\iint_{I\times M}\left\langle \mathcal{V},\partial_{t}\mathcal{X}\right\rangle +\left\langle \mathcal{V}\otimes\mathcal{V},\nabla\mathcal{X}\right\rangle +\mathfrak{p}\Div\mathcal{X}=0$.

The last condition means $\partial_{t}\mathcal{V}+\mathrm{div}(\mathcal{V}\otimes\mathcal{V})+\grad\mathfrak{p}=0$
as spacetime distributions.

\end{itemize}
\begin{rem}[Local elliptic regularity]
 As $\mathcal{V}\in L_{t}^{3}L^{3}\mathfrak{X}$, we have $\Delta\mathfrak{p}=-\Div\left(\Div\left(\mathcal{V}\otimes\mathcal{V}\right)\right)$
in $L_{t}^{\frac{3}{2}}H^{-2,\frac{3}{2}}(M)$. By embedding, there
is $\beta\in\mathbb{N}_{1}$ such that $\mathfrak{p}\in L^{\frac{3}{2}}\left(I,H^{-\beta,\frac{3}{2}}\left(M\right)\right)$.
Let $K\subset\subset W\subset\subset\accentset{\circ}{M}$ where $K$
and $W$ are precompact open sets. Then by interior elliptic regularity
(see \cite[Subsection 5.11, Theorem 11.1]{taylorPartialDifferentialEquations2011a}
and \cite[Subsection 13.6]{taylorPartialDifferentialEquations2011}),
we have for a.e. $t\in I:$ 
\[
\left\Vert \mathfrak{p}\left(t\right)\right\Vert _{L^{\frac{3}{2}}\left(K\right)}\lesssim_{K,W}\left\Vert \Delta\mathfrak{p}\left(t\right)\right\Vert _{H^{-2,\frac{3}{2}}(W)}+\left\Vert \mathfrak{p}\left(t\right)\right\Vert _{H^{-\beta,\frac{3}{2}}\left(W\right)}
\]
Then we can conclude $\mathfrak{p}\in L_{t}^{\frac{3}{2}}L^{\frac{3}{2}}(K)$,
for any $K\subset\accentset{\circ}{M}$ precompact. 
\end{rem}

As can be seen in \parencite{nguyenOnsagerConjectureEnergy2019,bardosOnsagerConjectureBounded2019,nguyenEnergyConservationInhomogeneous2020},
the correct replacement for the trace theorem is the following ``strip
decay'' hypothesis near the boundary:

\begin{equation}
\left\Vert \left(\frac{\left|\mathcal{V}\right|^{2}}{2}+\mathfrak{p}\right)\left\langle \mathcal{V},\widetilde{\nu}\right\rangle \right\Vert _{L_{t}^{1}L^{1}\left(M_{[\frac{r}{2},r]},\mathrm{avg}\right)}\xrightarrow{r\downarrow0}0\label{eq:strip-decay-hypo}
\end{equation}
where $\left\{ %
\begin{tabular}{l}
$\widetilde{\nu}$ is the extension of $\nu$ near the boundary.\tabularnewline
$M_{[r/2,r]}=\left\{ x\in M:\mathrm{dist}(x,\partial M)\in\left[r/2,r\right]\right\} $.\tabularnewline
$\mathrm{\avg}$ means the measure is normalized to become a probability
measure.\tabularnewline
\end{tabular}\right.$ 
\begin{thm}
Let $M$ be as in (\ref{eq:Euler}). Then $\left\Vert \mathcal{V}(t,\cdot)\right\Vert _{L^{2}(M)}$
is a.e. constant in time if $\left(\mathcal{V},\mathfrak{p}\right)$
is a weak solution with $\mathcal{V}\in L_{t}^{3}\mathbb{P}L^{3}\mathfrak{X}\cap L_{t}^{3}\widehat{B}_{3,V}^{\frac{1}{3}}\mathfrak{X}$
and (\ref{eq:strip-decay-hypo}) being true.
\end{thm}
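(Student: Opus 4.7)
The plan is to regularize the weak solution by the absolute Neumann heat semigroup, test the resulting smoothed equation against the regularization, and send the regularization parameter to zero. Set $\mathcal{V}_s := e^{s\widetilde{\Delta_N}}\mathcal{V}$. Since $\mathcal{V} = \mathbb{P}\mathcal{V}$ and the Hodge--Neumann heat flow commutes with $\mathbb{P}$ (a central feature established in the prequel), $\mathcal{V}_s$ is smooth, divergence-free, and tangent to $\partial M$ for every $s > 0$. Applying $e^{s\widetilde{\Delta_N}}$ to the weak Euler equation produces a classical evolution for $\mathcal{V}_s$, and pairing with $\mathcal{V}_s$ over $[t_1, t_2] \times M$ gives
\[
\tfrac{1}{2} \|\mathcal{V}_s(t_2)\|_{L^2}^2 - \tfrac{1}{2}\|\mathcal{V}_s(t_1)\|_{L^2}^2 = -\int_{t_1}^{t_2}\!\!\int_M \bigl\langle e^{s\widetilde{\Delta_N}}\Div(\mathcal{V}\otimes\mathcal{V}),\, \mathcal{V}_s\bigr\rangle\, dV\,dt,
\]
with the pressure contribution vanishing globally because $e^{s\widetilde{\Delta_N}}$ is self-adjoint and $\mathcal{V}_{2s}$ is orthogonal to all gradients in $L^2$ by Hodge theory (no boundary term, since $\mathcal{V}_{2s}$ is tangent to $\partial M$). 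As $\mathcal{V}_s \to \mathcal{V}$ in $L_t^3 L^3$, the theorem reduces to showing the right-hand side tends to $0$ as $s \downarrow 0$.

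I next introduce a cutoff $\chi_r \in C_c^\infty(\accentset{\circ}{M})$ with $\chi_r \equiv 1$ on $M_{>r}$, $\supp \chi_r \subset M_{>r/2}$, and $|\nabla \chi_r| \lesssim 1/r$. For the interior contribution weighted by $\chi_r$, the off-diagonal decay and near-diagonal description of the Hodge--Neumann heat kernel from \Secref{Construction-of-heat-kernel} let one treat $e^{s\widetilde{\Delta_N}}$ as an essentially local mollifier on scale $\sqrt{s}$. A heat-flow version of the Constantin--E--Titi symmetrization (in the spirit of \parencite{isettHeatFlowApproach2014}) then reduces this interior piece to the schematic form
\[
\int \chi_r \bigl\langle (\mathcal{V}\otimes\mathcal{V})_s - \mathcal{V}_s\otimes\mathcal{V}_s,\, \nabla \mathcal{V}_s\bigr\rangle\, dV\, dt,
\]
which a triple Hölder inequality, together with the local Bernstein estimates, dominates by
\[
\bigl\| s^{-1/6}(\mathcal{V} - \mathcal{V}_s)\bigr\|_{L_t^3 L^3(M_{>r/4})}^{2} \cdot \bigl\| s^{1/2}\,\nabla \mathcal{V}_s\bigr\|_{L_t^3 L^3(M_{>r/4})}.
\]
The second factor stays bounded by $\|\mathcal{V}\|_{L_t^3 L^3}$ by local Bernstein, while the first vanishes as $s \downarrow 0$ by the very definition of $\widehat{B}_{3,V}^{1/3}$.

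The remaining terms, originating from $\nabla \chi_r$ and $1 - \chi_r$, are supported in the strip $M_{[r/2, r]}$. Integrating by parts, adding back the identically zero pressure contribution in its cutoff-localized form, and using $\Div \mathcal{V}_s = 0$ together with the near-boundary localization of the heat kernel, these surviving pieces converge as $s \downarrow 0$ to a quantity bounded (up to a dimensional constant) by
\[
\bigl\| \bigl(\tfrac{|\mathcal{V}|^2}{2} + \mathfrak{p}\bigr)\langle \mathcal{V}, \widetilde{\nu}\rangle \bigr\|_{L_t^1 L^1(M_{[r/2,r]},\,\mathrm{avg})},
\]
which vanishes as $r \downarrow 0$ by hypothesis \eqref{eq:strip-decay-hypo}. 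The main obstacle I anticipate is rigorously justifying the iterated limit $\lim_{r\downarrow 0}\lim_{s\downarrow 0}$: the interior estimate demands first sending $s\downarrow 0$ at fixed $r$, while the strip estimate must be uniform in $s$. This is precisely where the uniformity of the off-diagonal decay and of the local Bernstein constants supplied by the Melrose-style construction becomes indispensable, ensuring that the commutator produced by the non-local heat operator acting across $\nabla \chi_r$ does not leak mass between the interior and the boundary strip.
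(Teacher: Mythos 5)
The core structural error is in the first step. You claim to apply $e^{s\widetilde{\Delta_N}}$ to the weak Euler equation and then pair against $\mathcal{V}_s$ globally, concluding that the pressure contribution vanishes because $\mathcal{V}_{2s}$ is divergence-free and tangent to $\partial M$. But the weak formulation is stated only for test fields $\mathcal{X}\in C_c^{\infty}(I,\mathfrak{X}_{00})$, i.e.\ compactly supported in $\accentset{\circ}{M}$, and $\mathcal{V}_{2s}$ is emphatically not such a field: after one application of the heat flow the support spreads to all of $M$ including $\partial M$. Extending the weak identity from $\mathfrak{X}_{00}$ to the heated test space $\mathscr{D}_{N}\mathfrak{X}$ is not automatic, because $\mathfrak{X}_{00}$ is not dense in $\mathscr{D}_{N}\mathfrak{X}$ in the $C^\infty$ topology; a heatable current can vanish on all of $\mathfrak{X}_{00}$ and still be nontrivial (one concentrated on $\partial M$). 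Moreover $\mathfrak{p}$ is only $L^{3/2}_{\mathrm{loc}}$ in the interior and $H^{-\beta}$ globally, so $\grad\mathfrak{p}$ is not a priori a heatable current and you cannot take $\langle\langle\grad\mathfrak{p},\mathcal{V}_{2s}\rangle\rangle=0$ for granted. Precisely this gap is why the paper's proof multiplies by the interior cutoff $\chi_r$ \emph{before} mollifying: the test field $\left(\eta(\chi_r\mathcal{V})_\tau^{2\varepsilon}\right)_\tau\chi_r$ carries a trailing $\chi_r$, so it is admissible for the weak formulation, and the pressure is not discarded --- it is carried along and combined with the velocity flux terms generated by $\nabla\chi_r$, which is what the strip-decay hypothesis (\ref{eq:strip-decay-hypo}) is tailored to kill as $r\downarrow 0$. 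Your later remark about ``adding back the identically zero pressure contribution in its cutoff-localized form'' contradicts your earlier claim that the pressure already vanished, which signals that the bookkeeping has gone off track.

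A second, independent gap concerns the interior estimate. You describe the interior commutator as a ``heat-flow version of the Constantin--E--Titi symmetrization'' and write it schematically as $\int\chi_r\langle(\mathcal{V}\otimes\mathcal{V})_s-\mathcal{V}_s\otimes\mathcal{V}_s,\nabla\mathcal{V}_s\rangle$. But the heat flow does not act on $2$-tensors such as $\mathcal{V}\otimes\mathcal{V}$ (the paper explicitly warns about this), so $(\mathcal{V}\otimes\mathcal{V})_s$ is undefined, and CET's pointwise quadratic-increment identity is a feature of convolution that does not transfer for free. The mechanism used in \parencite{isettHeatFlowApproach2014} and in this paper is not CET; it is a Duhamel argument: one defines $\mathcal{W}(t,s)=\Div(\mathcal{U}\otimes\chi_r\mathcal{U})^{3s}-\Div(\mathcal{U}^{2s}\otimes(\chi_r\mathcal{U})^{2s})^{s}$, computes $\mathcal{N}=(\partial_s-3\Delta)\mathcal{W}$ explicitly, solves backwards by the Duhamel formula, and estimates the resulting time-integrals with the two error lemmas (\Lemref{err_estimates}). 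Your proposed bound $\|s^{-1/6}(\mathcal{V}-\mathcal{V}_s)\|^2\cdot\|s^{1/2}\nabla\mathcal{V}_s\|$ might be provable a posteriori using the interior kernel blow-up, but you have not shown how the commutator reduces to it, and the trivial triple-H\"older does not apply when the heat flow sits between the nonlinear term and the test field. Finally, you omit temporal mollification entirely; the paper uses $\Phi_\tau*\mathcal{V}$ to make sense of $\partial_t\mathcal{V}$ before any of the limit exchanges, and that step is not purely cosmetic.
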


\subsection{Outline of the paper}

In \Secref{Onsager's-conjecture}, we summarize the key tools from
\parencite{huynhHodgetheoreticAnalysisManifolds2019}, discuss some
connections between the heat flow and Besov spaces, and then prove
Onsager's conjecture. However, at certain points we will need some
local-type estimates involving the heat flow, which are themselves
derived from the construction of the heat kernel. To avoid interrupting
the flow of the paper, the local estimates are proved in \Secref{Local_analysis},
while the construction of the kernel, arguably the most technical
step of the paper, can be found in \Secref{Construction-of-heat-kernel}.

\section{Common notation\label{sec:Common-notation}}

Some common notation we use:
\begin{itemize}
\item $A\lesssim_{x,\neg y}B$ means $A\leq CB$ where $C>0$ depends on
$x$ and not $y$. Similarly, $A\sim_{x,\neg y}B$ means $A\lesssim_{x,\neg y}B$
and $B\lesssim_{x,\neg y}A$. When the dependencies are obvious by
context, we do not need to make them explicit.
\item $\mathbb{N}_{0},\mathbb{N}_{1}:$ the set of natural numbers, starting
with $0$ and $1$ respectively.
\item DCT: dominated convergence theorem, FTC: fundamental theorem of calculus,
WLOG: without loss of generality.
\item TVS: topological vector space. For TVS $X$, $Y\leq X$ means $Y$
is a subspace of $X$.
\item $\mathcal{L}(X,Y):$ the space of continuous linear maps from TVS
$X$ to $Y$. Also $\mathcal{L}(X)=\mathcal{L}(X,X)$.
\item $C^{0}(S\to Y)$: the space of bounded, continuous functions from
metric space $S$ to normed vector space $Y$. Not to be confused
with $C_{\text{loc}}^{0}(S\to Y)$, which is the space of locally
bounded, continuous functions.
\item $\left\Vert x\right\Vert _{D(A)}=\left\Vert x\right\Vert _{X}+\left\Vert Ax\right\Vert _{X}$
and $\left\Vert x\right\Vert _{D(A)}^{*}=\left\Vert Ax\right\Vert _{X}$
where $A$ is an unbounded operator on (real/complex) Banach space
$X$ and $x\in D(A).$ Note that $\left\Vert \cdot\right\Vert _{D(A)}^{*}$
is not always a norm. We also define $D(A^{\infty})=\cap_{k\in\mathbb{N}_{1}}D(A^{k}).$
\item $B(x,r)=B_{r}(x)$: the open ball of radius $r$ centered at $x$
in a metric space.
\end{itemize}

\section{Onsager's conjecture\label{sec:Onsager's-conjecture}}

\subsection{Summary of preliminaries}

We will quickly summarize the key tools that we need for the proof
(see \parencite[Subsection 3.1]{huynhHodgetheoreticAnalysisManifolds2019}
for the precise locations where they are proved).
\begin{defn}
For the rest of the paper, unless otherwise stated, let $M$ be a
compact, smooth, Riemannian $n$-dimensional manifold, with no or
smooth boundary. We also let $I\subset\mathbb{R}$ be an open time
interval. We write $M_{<r}=\{x\in M:\mathrm{dist}(x,\partial M)<r\}$
for $r>0$ small. Similarly define $M_{\geq r},M_{<r},M_{[r_{1},r_{2}]}$
etc. Let $\accentset{\circ}{M}$ denote the interior of $M$.

By the musical isomorphism, we can consider $\mathfrak{X}M$ (the
space of \textbf{smooth vector fields}) mostly the same as $\Omega^{1}(M)$
(the space of \textbf{smooth $1$-forms}), \emph{mutatis mutandis}.
We note that $\mathfrak{X}M$, $\mathfrak{X}\left(\partial M\right)$
and $\restr{\mathfrak{X}M}{\partial M}$ are different. Unless otherwise
stated, let the implicit domain be $M$, so $\mathfrak{X}$ stands
for $\mathfrak{X}M$, and similarly $\Omega^{k}$ for $\Omega^{k}M$.
For $X\in\mathfrak{X}$, we write $X^{\flat}$ as its dual $1$-form.

Let $\mathfrak{X}_{00}\left(M\right)$ denote the set of smooth vector
fields of compact support in $\accentset{\circ}{M}$. We define $\Omega_{00}^{k}\left(M\right)$
similarly (smooth differential forms with compact support in $\accentset{\circ}{M}$).

Let $\nu$ denote the outwards unit normal vector field on $\partial M$.
$\nu$ can be extended via geodesics to a smooth vector field $\widetilde{\nu}$
which is of unit length near the boundary (and cut off at some point
away from the boundary).

For $X\in\mathfrak{X}M,$ define $\mathbf{n}X=\left\langle X,\nu\right\rangle \nu\in\left.\mathfrak{X}M\right|_{\partial M}$
(the \textbf{normal part}) and $\mathbf{t}X=\left.X\right|_{\partial M}-\mathbf{n}X$
(the \textbf{tangential part}). We note that $\mathbf{t}X$ and $\mathbf{n}X$
only depend on $\restr{X}{\partial M}$, so $\mathbf{t}$ and $\mathbf{n}$
can be defined on $\restr{\mathfrak{X}M}{\partial M}$, and $\mathbf{t}\left(\left.\mathfrak{X}M\right|_{\partial M}\right)\iso\mathfrak{X}(\partial M)$.

For $\omega\in\Omega^{k}\left(M\right),$ define $\mathbf{t}\omega$
and $\mathbf{n}\omega$ by 
\[
\mathbf{t}\omega(X_{1},...,X_{k}):=\omega(\mathbf{t}X_{1},...,\mathbf{t}X_{k})\;\;\forall X_{j}\in\mathfrak{X}M,j=1,...,k
\]
and $\mathbf{n}\omega=\left.\omega\right|_{\partial M}-\mathbf{t}\omega$.
Note that $\left(\mathbf{n}X\right)^{\flat}=\mathbf{n}X^{\flat}\;\forall X\in\mathfrak{X}$.

Let $\nabla$ denote the \textbf{Levi-Civita connection}, $d$ the
\textbf{exterior derivative}, $\delta$ the \textbf{codifferential},
and $\Delta=-\left(d\delta+\delta d\right)$ the \textbf{Hodge-Laplacian},
which is defined on vector fields by the musical isomorphism.

Familiar scalar function spaces such as $L^{p},W^{m,p}$ (\textbf{Lebesgue-Sobolev
spaces}), $B_{p,q}^{s}$ (\textbf{Besov spaces}), $C^{0,\alpha}$
(\textbf{Holder spaces}) can be defined on $M$ by partitions of unity
and given a unique topology. Similarly, we define such function spaces
for \textbf{tensor fields} and \textbf{differential forms} on $M$
by partitions of unity and local coordinates. For instance, we can
define $L^{2}\mathfrak{X}$ or $B_{3,1}^{\frac{1}{3}}\mathfrak{X}$.
\end{defn}

\begin{fact}
\label{fact:Fact_embed} $\forall\alpha\in\left(\frac{1}{3},1\right),\forall p\in\left(1,\infty\right):W^{1,p}\mathfrak{X}\hookrightarrow B_{p,1}^{\frac{1}{p}}\mathfrak{X\hookrightarrow}L^{p}\mathfrak{X}$
and $C^{0,\alpha}\mathfrak{X}=B_{\infty,\infty}^{\alpha}\mathfrak{X}\hookrightarrow B_{3,\infty}^{\alpha}\mathfrak{X}\hookrightarrow B_{3,1}^{\frac{1}{3}}\mathfrak{X}\hookrightarrow B_{3,\infty}^{\frac{1}{3}}\mathfrak{X}$ 
\end{fact}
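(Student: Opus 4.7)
The plan is to reduce each claimed continuous inclusion to the corresponding scalar-valued embedding on an open subset of Euclidean space via the partition-of-unity definition of vector-field function spaces, and then invoke classical Littlewood--Paley theory. No manifold-specific ingredients beyond this reduction are needed.

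First I would fix a finite atlas $\{(U_i,\psi_i)\}$ of $M$ (interior charts in $\mathbb{R}^n$, boundary charts in $\mathbb{R}^n_+$) and a subordinate partition of unity $\{\chi_i\}$. By the very definition of Besov, Sobolev, and H\"older spaces of vector fields on $M$, one has
\[
\|X\|_{B^s_{p,q}\mathfrak{X}} \sim \sum_i \|\chi_i X\circ\psi_i^{-1}\|_{B^s_{p,q}(\psi_i(U_i))^n},
\]
and analogously for $W^{1,p}\mathfrak{X}$, $L^p\mathfrak{X}$, and $C^{0,\alpha}\mathfrak{X}$. So every embedding in the statement reduces to the analogous scalar-valued embedding on a bounded open set in $\mathbb{R}^n$ or $\mathbb{R}^n_+$.

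For the first chain: $W^{1,p}\hookrightarrow B^{1/p}_{p,1}$ follows from the real-interpolation identity $B^{1/p}_{p,1} = (L^p,W^{1,p})_{1/p,1}$, since the right endpoint of any real interpolation couple embeds continuously into each intermediate space. Then $B^{1/p}_{p,1}\hookrightarrow L^p$ factors as $B^{1/p}_{p,1}\hookrightarrow B^{0}_{p,1}\hookrightarrow L^p$, the first being monotonicity in the smoothness index and the second the classical Littlewood--Paley fact that absolutely summable dyadic blocks converge in $L^p$ for $p\in(1,\infty)$.

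For the second chain, $C^{0,\alpha} = B^{\alpha}_{\infty,\infty}$ for $\alpha\in(0,1)$ is the standard H\"older--Besov identification (cf.\ Triebel). The inclusion $B^{\alpha}_{\infty,\infty}\hookrightarrow B^{\alpha}_{3,\infty}$ on bounded $\Omega$ is $L^\infty(\Omega)\hookrightarrow L^3(\Omega)$ applied blockwise. For $B^{\alpha}_{3,\infty}\hookrightarrow B^{1/3}_{3,1}$ with $\alpha>1/3$, using Littlewood--Paley projectors $P_j$ I would estimate
\[
\sum_j 2^{j/3}\|P_j f\|_{L^3} \leq \Big(\sup_k 2^{k\alpha}\|P_k f\|_{L^3}\Big)\sum_j 2^{-j(\alpha-1/3)} \lesssim_\alpha \|f\|_{B^{\alpha}_{3,\infty}},
\]
finite since $\alpha-1/3>0$; and $B^{1/3}_{3,1}\hookrightarrow B^{1/3}_{3,\infty}$ is trivially the $\ell^1\subset\ell^\infty$ inclusion on block norms. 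The only mild obstacle is verifying that Besov spaces on the half-space, as defined via partitions of unity, satisfy the same embeddings as the full-space scalar Besov spaces; this is routine since the embeddings are all preserved under restriction and admit bounded Stein-type extension operators for the parameter ranges in play.
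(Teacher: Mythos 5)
The paper itself does not prove this Fact; it is imported verbatim from the prequel \parencite[Subsection~3.1]{huynhHodgetheoreticAnalysisManifolds2019}, so there is no in-paper argument to compare against. Judged on its own, your proof is correct and follows the standard route: reduce via the partition-of-unity definition to scalar Besov spaces on $\mathbb{R}^n$ or $\mathbb{R}^n_+$, then invoke $W^{1,p}\hookrightarrow (L^p,W^{1,p})_{1/p,1}=B^{1/p}_{p,1}$, $B^{1/p}_{p,1}\hookrightarrow B^0_{p,1}\hookrightarrow L^p$, the H\"older--Besov identification $C^{0,\alpha}=B^\alpha_{\infty,\infty}$, a summability estimate for $B^\alpha_{3,\infty}\hookrightarrow B^{1/3}_{3,1}$ when $\alpha>1/3$, and $\ell^1\subset\ell^\infty$.

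One small gloss worth flagging: the step $B^\alpha_{\infty,\infty}\hookrightarrow B^\alpha_{3,\infty}$ ``by $L^\infty\hookrightarrow L^3$ applied blockwise'' is not quite literal, because Littlewood--Paley blocks $P_j f$ are not compactly supported even when $f$ is, so one cannot directly replace $\|P_jf\|_{L^3}$ by $\|P_jf\|_{L^\infty}$ on a bounded set. The standard fix is either (i) to note that $\chi_i X$ has fixed compact support, decompose $P_j(\chi_iX)$ near and far from that support, and use the rapid (Schwartz) decay of LP kernels to control the far part by $O(2^{-jN})\|\chi_iX\|_{L^1}$; or (ii) to bypass blockwise reasoning altogether and interpolate, e.g.\ $B^\alpha_{\infty,\infty}=(L^\infty,W^{1,\infty})_{\alpha,\infty}\hookrightarrow(L^3,W^{1,3})_{\alpha,\infty}=B^\alpha_{3,\infty}$ on the compact manifold. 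Either patch is routine, and the rest of your argument is sound as written.
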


\begin{defn}
We write $\left\langle \cdot,\cdot\right\rangle $ to denote the \textbf{Riemannian
fiber metric} for tensor fields on $M$. We also define the dot product
\[
\left\langle \left\langle \sigma,\theta\right\rangle \right\rangle =\int_{M}\left\langle \sigma,\theta\right\rangle \vol
\]
where $\sigma$ and $\theta$ are tensor fields of the same type,
while $\vol$ is the \textbf{Riemannian volume form}. When there is
no possible confusion, we will omit writing $\vol$.

Define $\Omega(M)=\bigoplus_{k=0}^{n}\Omega^{k}(M)$ as the \textbf{graded
algebra} of differential forms where multiplication is the wedge product.
We then naturally define $W^{m,p}\Omega(M)=\bigoplus_{k=0}^{n}W^{m,p}\Omega^{k}(M)$,
and similarly for $B_{p,q}^{s},F_{p,q}^{s}$ spaces. Spaces like $\Omega_{N}\left(M\right)$,
$\Omega_{00}\left(M\right)$ are also defined by direct sums.

We define $\mathfrak{X}_{N}=\{X\in\mathfrak{X}:\begin{array}{c}
\mathbf{n}X=0\end{array}\}$ (\textbf{Neumann condition}). In order to define the Neumann condition
for less regular vector fields, we use the \textbf{trace theorem}.
We can similarly define $\Omega_{N}^{k}$.
\end{defn}

\begin{fact}[Trace theorem]
\label{fact:fact_trace}  Let $p\in[1,\infty)$. Then
\begin{itemize}
\item $B_{p,1}^{\frac{1}{p}}\left(M\right)\twoheadrightarrow L^{p}\left(\partial M\right)$
and $B_{p,1}^{\frac{1}{p}}\mathfrak{X}M\twoheadrightarrow L^{p}\restr{\mathfrak{X}M}{\partial M}$
are continuous surjections.
\item $\forall m\in\mathbb{N}_{1}:B_{p,1}^{m+\frac{1}{p}}\mathfrak{X}M\twoheadrightarrow B_{p,1}^{m}\restr{\mathfrak{X}M}{\partial M}\hookrightarrow W^{m,p}\restr{\mathfrak{X}M}{\partial M}$
is continuous.
\end{itemize}
\end{fact}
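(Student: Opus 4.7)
The plan is to reduce everything to a model estimate on the upper half-space $\mathbb{R}^n_+$. By definition, the Besov norms on $M$ are assembled via a partition of unity subordinate to an atlas containing boundary-adapted charts (Fermi coordinates flattening $\partial M$ to $\{x_n=0\}$), so it suffices to prove the corresponding trace statements for compactly supported functions on $\mathbb{R}^n_+$. Using a Stein-type reflection extension preserving $B^{1/p}_{p,1}$, the model problem further reduces to showing that $u\mapsto u(\cdot,0)$ is a continuous surjection $B^{1/p}_{p,1}(\mathbb{R}^n)\twoheadrightarrow L^p(\mathbb{R}^{n-1})$. The vector-field case is then handled componentwise in these coordinates, since $\restr{X}{\partial M}$ transforms linearly with coefficients smooth up to $\partial M$.

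For continuity, apply the standard Littlewood-Paley decomposition $u=\sum_j \Delta_j u$, where $\Delta_j u$ has Fourier support in an annulus of scale $2^j$. Bernstein's inequality in the normal variable yields
\[
\|(\Delta_j u)(\cdot,0)\|_{L^p(\mathbb{R}^{n-1})}\lesssim 2^{j/p}\|\Delta_j u\|_{L^p(\mathbb{R}^n)},
\]
so summing in $j$ gives $\|u(\cdot,0)\|_{L^p}\leq \sum_j\|(\Delta_j u)(\cdot,0)\|_{L^p}\lesssim \sum_j 2^{j/p}\|\Delta_j u\|_{L^p}=\|u\|_{B^{1/p}_{p,1}}$. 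The hypothesis $q=1$ is sharp here: $\ell^1$-summability is precisely what absorbs the Bernstein factor $2^{j/p}$. For surjectivity, given $f\in L^p(\mathbb{R}^{n-1})$, decompose $f=\sum_j f_j$ into frequency-localized pieces at scale $2^j$ and define the extension
\[
Ef(x',x_n)=\sum_j f_j(x')\,\phi(2^j x_n)
\]
for a fixed Schwartz $\phi$ with $\phi(0)=1$ and Fourier support in an annulus. A scaling computation gives $\|f_j(x')\phi(2^j x_n)\|_{L^p(\mathbb{R}^n)}\sim 2^{-j/p}\|f_j\|_{L^p(\mathbb{R}^{n-1})}$; hence $\|Ef\|_{B^{1/p}_{p,1}}\lesssim \sum_j \|f_j\|_{L^p}\lesssim \|f\|_{L^p(\mathbb{R}^{n-1})}$ by the $L^p$-boundedness of the Littlewood-Paley square function, while $Ef(\cdot,0)=f$ by construction.

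For the higher-order statement, the identical Littlewood-Paley argument yields a continuous surjection $B^{m+1/p}_{p,1}(\mathbb{R}^n)\twoheadrightarrow B^m_{p,1}(\mathbb{R}^{n-1})$: the Bernstein factor $2^{j/p}$ is exactly absorbed by the index shift from $m+1/p$ down to $m$, and the extension $E$ above generalizes directly. The embedding $B^m_{p,1}\hookrightarrow W^{m,p}$ for integer $m\geq 1$ and $p\in[1,\infty)$ follows by Bernstein in each coordinate direction: $\|\partial^\alpha u\|_{L^p}\lesssim \sum_j 2^{j|\alpha|}\|\Delta_j u\|_{L^p}\lesssim \|u\|_{B^m_{p,1}}$ for $|\alpha|\leq m$, where once again the $q=1$ hypothesis is essential. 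The main technical obstacle is verifying that the intrinsic localization on $M$ (via partition of unity) is compatible with the half-space extension and that Besov norms transfer cleanly across chart transitions; this is classical and relies on the smoothness of the transition maps together with the invariance of $B^s_{p,1}$ under composition with diffeomorphisms.
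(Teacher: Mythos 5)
This result is stated as a \emph{Fact} in the paper, with the proof attributed to \parencite[Subsection 3.1]{huynhHodgetheoreticAnalysisManifolds2019}, so there is no in-paper argument to compare against. Your continuity estimate for the trace (via the one-dimensional Bernstein inequality in the normal variable), the higher-order trace $B^{m+1/p}_{p,1}\twoheadrightarrow B^m_{p,1}$, and the embedding $B^m_{p,1}\hookrightarrow W^{m,p}$ are all standard and correct. The genuine gap is in the surjectivity onto $L^p$ at the critical index. You construct the extension $Ef=\sum_j f_j(x')\,\phi(2^jx_n)$ with $f_j$ the Littlewood--Paley pieces of $f$ at scale $2^j$, and correctly compute $\|Ef\|_{B^{1/p}_{p,1}(\mathbb{R}^n)}\lesssim\sum_j\|f_j\|_{L^p(\mathbb{R}^{n-1})}$. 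But the next step, $\sum_j\|f_j\|_{L^p}\lesssim\|f\|_{L^p}$, invoked ``by the $L^p$-boundedness of the Littlewood--Paley square function,'' is false: the square function theorem gives $\|f\|_{L^p}\sim\bigl\|\bigl(\sum_j|f_j|^2\bigr)^{1/2}\bigr\|_{L^p}$, whereas $\sum_j\|f_j\|_{L^p}$ is the $B^0_{p,1}(\mathbb{R}^{n-1})$ norm, which is strictly stronger than $L^p$ for every $p\in(1,\infty)$ (already $B^0_{2,1}\subsetneq L^2$; concretely $f=\sum_j a_j e^{i2^jx_1}g(x')$ with $\{a_j\}\in\ell^2\setminus\ell^1$ lies in $L^p$ but has $\sum_j\|f_j\|_{L^p}=\infty$).

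What your argument actually shows is that $E$ is bounded from $B^0_{p,1}(\mathbb{R}^{n-1})$ into $B^{1/p}_{p,1}(\mathbb{R}^n)$, giving surjectivity only onto the proper subspace $B^0_{p,1}\subsetneq L^p$. The endpoint $s=1/p$, $q=1$ is genuinely harder than the subcritical regime: any extension that places the entire $j$-th frequency block of $f$ at one normal scale $\sim 2^{-j}$ --- including the Poisson-type extension $\phi(x_nD')f$ --- unavoidably produces the $B^0_{p,1}$ norm of the boundary datum. The known surjectivity proofs must spread the boundary data in the normal direction in a way adapted to the $L^p$ (rather than $B^0_{p,1}$) structure of $f$, e.g.\ via atomic or wavelet decompositions of $f$ in $L^p$ that tile the prism $Q\times[0,\ell(Q)]$ over each boundary dyadic cube $Q$, so that an $\ell^p$-sum over cubes at a fixed scale replaces the $\ell^1$-sum over scales, or via a Gagliardo-type telescoping construction. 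This missing idea does not affect your higher-order bullet, where the boundary target is the Besov space $B^m_{p,1}(\mathbb{R}^{n-1})$ itself and your single-scale-per-frequency extension is exactly the right one; the discrepancy is specific to the $s=1/p$ endpoint, where the trace space jumps from $B^0_{p,1}$ up to $L^p$.
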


\begin{defn}
We define $\mathbb{P}$ as the \textbf{Leray projection}, which projects
$\mathfrak{X}$ onto $\mathrm{Ker}\left(\restr{\mathrm{div}}{\mathfrak{X}_{N}}\right)$.
Note that the Neumann condition is enforced by $\mathbb{P}$.
\begin{fact}
$\forall m\in\mathbb{N}_{0},\forall p\in\left(1,\infty\right)$, \textbf{$\mathbb{P}$
}is continuous on $W^{m,p}\mathfrak{X}$ and $\mathbb{P}\left(W^{m,p}\mathfrak{X}\right)=W^{m,p}\text{-}\mathrm{cl}\left(\mathbb{P}\mathfrak{X}\right)$
(closure in the $W^{m,p}$-topology).
\end{fact}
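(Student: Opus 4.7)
The plan is to realize $\mathbb{P}$ via the scalar Hodge/Helmholtz decomposition and reduce the continuity claim to standard $L^{p}$ elliptic regularity for the Neumann Laplacian. Concretely, for $X\in\mathfrak{X}$ one writes $X=\mathbb{P}X+\nabla\phi$, where $\phi\in C^{\infty}(M)/\mathbb{R}$ solves the scalar Neumann boundary value problem
\[
\Delta\phi=\Div X\ \text{ in }M,\qquad\partial_{\nu}\phi=\langle X,\nu\rangle\ \text{ on }\partial M.
\]
The compatibility condition $\int_{M}\Div X=\int_{\partial M}\langle X,\nu\rangle$ is automatic by the divergence theorem, and $Y:=X-\nabla\phi$ is easily checked to lie in $\Ker(\Div|_{\mathfrak{X}_{N}})$, so $\mathbb{P}X=Y$. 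Consequently, proving continuity of $\mathbb{P}$ on $W^{m,p}\mathfrak{X}$ is equivalent to the estimate $\|\nabla\phi\|_{W^{m,p}}\lesssim\|X\|_{W^{m,p}}$.

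For $m\geq1$ and $p\in(1,\infty)$, if $X\in W^{m,p}\mathfrak{X}$ then $\Div X\in W^{m-1,p}(M)$ and, by the trace theorem, $\langle X,\nu\rangle\in W^{m-1/p,p}(\partial M)$. Standard $L^{p}$-regularity for the scalar Neumann Laplacian (a classical Agmon--Douglis--Nirenberg result; cf.\ \cite[\S 5]{taylorPartialDifferentialEquations2011a} and \cite[\S 13]{taylorPartialDifferentialEquations2011}) then yields $\phi\in W^{m+1,p}(M)$ with
\[
\|\nabla\phi\|_{W^{m,p}}\lesssim_{m,p}\|\Div X\|_{W^{m-1,p}}+\|\langle X,\nu\rangle\|_{W^{m-1/p,p}(\partial M)}\lesssim\|X\|_{W^{m,p}}.
\]
The $m=0$ case has to be handled via the weak formulation: find $\phi\in W^{1,p}(M)/\mathbb{R}$ with $\int_{M}\langle\nabla\phi,\nabla\psi\rangle=\int_{M}\langle X,\nabla\psi\rangle$ for every $\psi\in W^{1,p'}(M)$, for which $\|\nabla\phi\|_{L^{p}}\lesssim\|X\|_{L^{p}}$ again follows from the same Neumann $L^{p}$-theory (or by duality against the $m=1$ estimate). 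Density of $\mathfrak{X}$ in $W^{m,p}\mathfrak{X}$ then extends $\mathbb{P}$ from smooth fields to a continuous projection on all of $W^{m,p}\mathfrak{X}$, and $\mathbb{P}^{2}=\mathbb{P}$ on $W^{m,p}\mathfrak{X}$ by density.

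The range identification is then purely abstract functional analysis. Since $\mathbb{P}$ is a continuous idempotent on the Banach space $W^{m,p}\mathfrak{X}$, its range is closed, so $\mathbb{P}(W^{m,p}\mathfrak{X})$ is $W^{m,p}$-closed and contains $\mathbb{P}\mathfrak{X}$, hence contains $W^{m,p}\text{-}\mathrm{cl}(\mathbb{P}\mathfrak{X})$. Conversely, for any $X\in W^{m,p}\mathfrak{X}$, pick smooth $X_{k}\to X$ in $W^{m,p}$; by continuity $\mathbb{P}X=\lim_{k}\mathbb{P}X_{k}\in W^{m,p}\text{-}\mathrm{cl}(\mathbb{P}\mathfrak{X})$, giving the opposite inclusion. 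The only real technical step is the $L^{p}$-regularity estimate for the scalar Neumann Laplacian when $p\neq2$, but for a smooth compact Riemannian manifold with smooth boundary this is entirely classical.
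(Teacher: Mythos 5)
The paper does not prove this statement in-text; it is listed as a Fact cited from the prequel \parencite{huynhHodgetheoreticAnalysisManifolds2019}, so there is no argument here to compare against directly. Your proof follows what is essentially the standard and canonical route: realize $\mathbb{P}$ via the Helmholtz decomposition $X=\mathbb{P}X+\nabla\phi$, reduce $W^{m,p}$-boundedness of $\mathbb{P}$ to Agmon--Douglis--Nirenberg elliptic regularity for the inhomogeneous scalar Neumann problem, and obtain the range identity from the abstract fact that a continuous idempotent on a Banach space has closed range, combined with density of $\mathfrak{X}$ in $W^{m,p}\mathfrak{X}$. This is almost certainly what the prequel does as well, whether phrased for vector fields as you do or for $1$-forms via the absolute Hodge decomposition (which is the same statement). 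The argument is sound. The one slightly loose point is the $m=0$ case: the parenthetical suggestion that $\|\nabla\phi\|_{L^{p}}\lesssim\|X\|_{L^{p}}$ follows \emph{by duality against the $m=1$ estimate} is not immediate as stated, since dualizing $W^{1,p'}$-boundedness places you in $(W^{1,p'})^{*}$ rather than $L^{p}$, and an interpolation step would still be required. However, your primary cited route --- the classical $L^{p}$ solvability theory for the weak Neumann problem (Simader--Sohr, Fabes--M\'endez--Mitrea) --- handles $m=0$ directly, so the proof stands.
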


\end{defn}

We collect some results regarding our heat flow in one place:
\begin{fact}[Absolute Neumann heat flow]
\label{fact:Fact_heat} There exists a semigroup of operators $\left(S(t)\right)_{t\geq0}$
acting on $\cup_{p\in\left(1,\infty\right)}L^{p}\mathfrak{X}$ such
that
\begin{enumerate}
\item $S\left(t_{1}\right)S\left(t_{2}\right)=S\left(t_{1}+t_{2}\right)\;\forall t_{1},t_{2}\geq0$
and $S\left(0\right)=1$.
\item $\forall p\in\left(1,\infty\right),\forall X\in L^{p}\mathfrak{X}:$
\begin{enumerate}
\item $S(t)X\in\mathfrak{X}_{N}$ and $\partial_{t}\left(S(t)X\right)=\Delta S(t)X$
$\forall t>0$.
\item $S(t)X\xrightarrow[t\to t_{0}]{C^{\infty}}S\left(t_{0}\right)X$ $\forall t_{0}>0$.
\item $\left\Vert S(t)X\right\Vert _{W^{m,p}}\lesssim_{m,p}\left(\frac{1}{t}\right)^{\frac{m}{2}}\left\Vert X\right\Vert _{L^{p}}$
$\forall m\in\mathbb{N}_{0},\forall t\in\left(0,1\right)$.
\item $S(t)X\xrightarrow[t\to0]{L^{p}}X$.
\end{enumerate}
\item $\forall p\in\left(1,\infty\right),\forall X\in W^{1,p}\mathfrak{X}_{N}:$
\begin{enumerate}
\item $\left\Vert S(t)X\right\Vert _{W^{m+1,p}}\lesssim_{m,p}\left(\frac{1}{t}\right)^{\frac{m}{2}}\left\Vert X\right\Vert _{W^{1,p}}$
$\forall m\in\mathbb{N}_{0},\forall t\in\left(0,1\right)$.
\item $S(t)X\xrightarrow[t\to0]{W^{1,p}}X$.
\end{enumerate}
\item $S\left(t\right)\mathbb{P}=\mathbb{P}S\left(t\right)$ on $W^{m,p}\mathfrak{X}$
$\forall m\in\mathbb{N}_{0},\forall p\in\left(1,\infty\right),\forall t\geq0$.
\item $\left\langle \left\langle S(t)X,Y\right\rangle \right\rangle =\left\langle \left\langle X,S(t)Y\right\rangle \right\rangle \forall t\geq0,\forall p\in\left(1,\infty\right),\forall X\in L^{p}\mathfrak{X},\forall Y\in L^{p'}\mathfrak{X}$.
\end{enumerate}
\end{fact}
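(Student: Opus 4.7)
Via the musical isomorphism I identify $\mathfrak{X}$ with $\Omega^{1}$, so that $\Delta=-(d\delta+\delta d)$ acts on vector fields. I would realize $-\widetilde{\Delta_{N}}$ as the Friedrichs self-adjoint operator on $L^{2}\mathfrak{X}$ associated with the closed, nonnegative quadratic form
\[
Q(X,Y)=\left\langle \left\langle dX^{\flat},dY^{\flat}\right\rangle \right\rangle +\left\langle \left\langle \delta X^{\flat},\delta Y^{\flat}\right\rangle \right\rangle
\]
having form domain $\{X\in W^{1,2}\mathfrak{X}:\mathbf{n}X=0\}$. Integrating by parts identifies its operator domain as $\{X\in W^{2,2}\mathfrak{X}:\mathbf{n}X=0,\,\mathbf{n}dX^{\flat}=0\}$, the \emph{absolute boundary conditions}. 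The spectral theorem then delivers $S(t):=e^{t\widetilde{\Delta_{N}}}$ as a strongly continuous, self-adjoint contraction semigroup on $L^{2}\mathfrak{X}$, immediately giving (1), the $L^{2}$ instances of (2)(a), (2)(b), (2)(d), and the $p=p'=2$ case of (5).

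\textbf{$L^{p}$ extension and smoothing.} For $p\in(1,\infty)$ I would show that $-\widetilde{\Delta_{N}}$ realizes on $L^{p}\mathfrak{X}$ as a densely defined sectorial operator generating a bounded analytic semigroup. The key ingredient is that the mixed boundary system $(\mathbf{n}X,\mathbf{n}dX^{\flat})$ for the elliptic system $-\Delta$ satisfies the Lopatinskii--Shapiro condition, so standard $L^{p}$-parabolic theory applies and yields $\left\Vert (t\widetilde{\Delta_{N}})^{k}S(t)\right\Vert _{L^{p}\to L^{p}}\lesssim 1$ for $t\in(0,1)$. Combined with the local and boundary elliptic regularity estimate $\left\Vert X\right\Vert _{W^{2k,p}}\lesssim\left\Vert \widetilde{\Delta_{N}}^{k}X\right\Vert _{L^{p}}+\left\Vert X\right\Vert _{L^{p}}$ on the domain, together with complex interpolation to fill in odd-order Sobolev scales, this yields the smoothing estimate (2)(c). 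The strong $L^{p}$-continuity (2)(d) then follows by density of $C^{\infty}\cap\mathfrak{X}_{N}$ and uniform boundedness.

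\textbf{Boundary-compatible data, commutation with $\mathbb{P}$, and duality.} For (3)(a) I would use the factorization $S(t)=S(t/2)S(t/2)$: first establish uniform $W^{1,p}$-boundedness of $S(s)$ on $W^{1,p}\mathfrak{X}_{N}$ for $s\in[0,1]$ (via a commutator estimate between $\nabla$ and $S(s)$ that absorbs the boundary terms, or by noting $W^{1,p}\mathfrak{X}_{N}\hookrightarrow D((-\widetilde{\Delta_{N}})^{1/2}_{L^{p}})$), then apply (2)(c) to $S(t/2)X$ to gain $m$ extra derivatives; (3)(b) follows analogously by density. For (4), observe that $\mathbb{P}$ is the orthogonal projection in $L^{2}$ onto $\ker(\mathrm{div}|_{\mathfrak{X}_{N}})=\ker(\delta|_{\Omega_{N}^{1}})$; since $\delta$ commutes with $\Delta$ and the absolute boundary conditions are preserved by the flow, this subspace is $\widetilde{\Delta_{N}}$-invariant, so $S(t)\mathbb{P}=\mathbb{P}S(t)$ on $L^{2}$ by functional calculus, and this extends to $W^{m,p}$ via density together with the uniform $L^{p}/W^{m,p}$-boundedness of both sides. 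For general $p,p'$ in (5), start from the $L^{2}$ self-adjointness and approximate by fields in $L^{2}\cap L^{p}$ (respectively $L^{2}\cap L^{p'}$).

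The principal obstacle will be the $L^{p}$-theory for the mixed vector-field boundary system: Lopatinskii--Shapiro must be verified for \emph{both} $\mathbf{n}X=0$ and $\mathbf{n}dX^{\flat}=0$ simultaneously, which is strictly more delicate than scalar Neumann, and the smoothing constants must be shown uniform as $t\downarrow0$ and across all $p\in(1,\infty)$. This is precisely where the kernel-level construction of \Secref{Construction-of-heat-kernel} later pays off, by furnishing pointwise off-diagonal heat-kernel bounds that render these $L^{p}$-mapping properties nearly automatic via Young-type inequalities; but at the level of this preliminary fact, the Hilbert-space and semigroup-theory route sketched above (as in the prequel) is sufficient.
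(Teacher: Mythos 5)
This statement is labeled a \emph{Fact} in Section 3.1, which the paper explicitly presents as a ``summary of preliminaries'' and refers for proofs to the prequel \parencite{huynhHodgetheoreticAnalysisManifolds2019}; the present paper therefore contains no proof to compare against. That said, the paper's own narration (``the absolute Neumann heat flow, created via functional analysis'') confirms that the prequel proceeds by the Hilbert-space-plus-semigroup-theory route, which is exactly what you sketch: Friedrichs extension with absolute boundary conditions on $L^{2}$, $L^{p}$ sectoriality via Lopatinskii--Shapiro ellipticity of the pair $(\mathbf{n}X,\mathbf{n}dX^{\flat})$, elliptic regularity plus interpolation for the smoothing scale, and invariance of $\ker(\delta|_{\Omega_{N}^{1}})$ for the commutation with $\mathbb{P}$. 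So your proposal is a faithful outline of the intended argument.

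Two refinements are worth flagging. First, for (3)(a) the factorization $S(t)=S(t/2)S(t/2)$ alone does not ``gain $m$ extra derivatives'' beyond (2)(c): applying (2)(c) to $S(t/2)X\in L^{p}$ just reproduces (2)(c). The clean version is the alternative you mention, namely identifying $W^{1,p}\mathfrak{X}_{N}$ with $D\bigl((-\widetilde{\Delta_{N}})^{1/2}_{L^{p}}\bigr)$ (the half-power domain on $L^{p}$ drops the first-order condition $\mathbf{n}dX^{\flat}=0$ but retains the zero-order condition $\mathbf{n}X=0$), then using
\[
\bigl\Vert(-\widetilde{\Delta_{N}})^{(m+1)/2}S(t)X\bigr\Vert_{L^{p}}=\bigl\Vert(-\widetilde{\Delta_{N}})^{m/2}S(t)(-\widetilde{\Delta_{N}})^{1/2}X\bigr\Vert_{L^{p}}\lesssim t^{-m/2}\bigl\Vert(-\widetilde{\Delta_{N}})^{1/2}X\bigr\Vert_{L^{p}},
\]
followed by the elliptic-regularity equivalence of $\Vert\cdot\Vert_{W^{m+1,p}}$ with the operator-power norm. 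Second, for (2)(a) one should make explicit that $S(t)X\in\mathfrak{X}_{N}$ requires smoothness up to the boundary, not merely interior smoothness; this comes from iterating the elliptic boundary-regularity estimate together with the analytic smoothing, and it is worth spelling out since $\mathfrak{X}_{N}$ by definition consists of smooth fields. Neither of these is a structural gap; your route is correct and matches the prequel's.
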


These estimates precisely fit the analogy $e^{t\Delta}\approx P_{\leq\frac{1}{\sqrt{t}}}$
where $P$ is the \textbf{Littlewood-Paley projection}. 

Analogous results hold for scalar functions and differential forms. 

We observe some basic identities from differential geometry:
\begin{itemize}
\item Using \textbf{Penrose abstract index notation}, for any smooth tensors
$T_{a_{1}...a_{k}}$, we define $\left(\nabla T\right)_{ia_{1}...a_{k}}=\nabla_{i}T_{a_{1}...a_{k}}$
and $\div T=\nabla^{i}T_{ia_{2}...a_{k}}.$
\item For all smooth tensors $T_{a_{1}...a_{k}}$ and $Q_{a_{1}...a_{k+1}}$:
\[
\int_{M}\nabla_{i}\left(T_{a_{1}...a_{k}}Q^{ia_{1}...a_{k}}\right)=\int_{M}\nabla_{i}T_{a_{1}...a_{k}}Q^{ia_{1}...a_{k}}+\int_{M}T_{a_{1}...a_{k}}\nabla_{i}Q^{ia_{1}...a_{k}}=\int_{\partial M}\nu_{i}T_{a_{1}...a_{k}}Q^{ia_{1}...a_{k}}
\]
\item $\left(\nabla_{a}\nabla_{b}-\nabla_{b}\nabla_{a}\right)T^{ij}{}_{kl}=-R_{ab\sigma}{}^{i}T^{\sigma j}{}_{kl}-R_{ab\sigma}{}^{j}T^{i\sigma}{}_{kl}+R_{abk}{}^{\sigma}T^{ij}{}_{\sigma l}+R_{abl}{}^{\sigma}T^{ij}{}_{k\sigma}$
for any tensor $T^{ij}{}_{kl}$, where $R$ is the \textbf{Riemann
curvature tensor}.\textbf{ }Similar identities hold for other types
of tensors. When we do not care about the exact indices and how they
contract, we can just write the \textbf{schematic identity }$\left(\nabla_{a}\nabla_{b}-\nabla_{b}\nabla_{a}\right)T^{ij}{}_{kl}=R*T.$
As $R$ is bounded on compact $M$, interchanging derivatives is a
zeroth-order operation on $M$. In particular, we have the \textbf{Weitzenbock
formula}:
\begin{equation}
\Delta X=\nabla_{i}\nabla^{i}X+R*X\;\forall X\in\mathfrak{X}M\label{eq:Weitzen_schematic}
\end{equation}
\end{itemize}
There is an elementary lemma which is useful for convergence (the
proof is straightforward and omitted):
\begin{lem}[Dense convergence]
\label{lem:dense_conv} Let $X,Y$ be (real/complex) Banach spaces
and $X_{0}\leq X$ be norm-dense. Let $(T_{j})_{j\in\mathbb{N}}$
be bounded in $\mathcal{L}(X,Y)$ and $T\in\mathcal{L}(X,Y)$.

If $T_{j}x_{0}\to Tx_{0}\;\forall x_{0}\in X_{0}$ then $T_{j}x\to Tx\;\forall x\in X$.
\end{lem}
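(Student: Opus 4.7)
The plan is a standard three-$\varepsilon$ argument, using the uniform boundedness hypothesis to handle the approximation of $x$ by an element of $X_0$.

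Fix $x \in X$ and $\varepsilon > 0$, and let $M := \sup_j \|T_j\|_{\mathcal{L}(X,Y)} < \infty$. By norm-density of $X_0$, I would choose $x_0 \in X_0$ with $\|x - x_0\|_X < \varepsilon$. The triangle inequality then gives
\[
\|T_j x - Tx\|_Y \leq \|T_j(x-x_0)\|_Y + \|T_j x_0 - Tx_0\|_Y + \|T(x_0 - x)\|_Y \leq (M + \|T\|)\varepsilon + \|T_j x_0 - Tx_0\|_Y.
\]
By hypothesis the middle term tends to $0$ as $j \to \infty$, so $\limsup_{j\to\infty} \|T_j x - Tx\|_Y \leq (M + \|T\|)\varepsilon$. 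Sending $\varepsilon \downarrow 0$ finishes the proof.

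There is essentially no obstacle here; the only ingredient beyond the triangle inequality is the finiteness of $M$, which is given. (If the boundedness of $(T_j)$ were not assumed, one would need to invoke Banach--Steinhaus to extract it from pointwise convergence on $X_0$ — but $X_0$ being merely dense is not in general enough to apply uniform boundedness, which is precisely why the hypothesis is stated as it is.)
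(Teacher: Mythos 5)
Your three-$\varepsilon$ argument is correct and is precisely the standard proof that the paper describes as ``straightforward and omitted''; there is nothing in the paper to compare against, and your write-up supplies exactly the intended reasoning, using the assumed uniform bound $M$ to control the approximation error and density to make that error small.
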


As the heat flow does not preserve compact supports in $\accentset{\circ}{M}$,
it is not defined on distributions. This inspires the formulation
of \textbf{heatable currents}.
\begin{defn}[Heatable currents]

Define:
\begin{itemize}
\item $\mathscr{D}\Omega^{k}=\Omega_{00}^{k}=\colim\{\left(\Omega_{00}^{k}\left(K\right),C^{\infty}\text{ topo}\right):K\subset\accentset{\circ}{M}\text{ compact}\}$
as the space of \textbf{test $k$-forms} with \textbf{Schwartz's topology}\footnote{Confusingly enough, ``Schwartz's topology'' refers to the topology
on the space of distributions, not the topology for Schwartz functions.} (colimit in the category of locally convex TVS).
\item $\mathscr{D}'\Omega^{k}=\left(\mathscr{D}\Omega^{k}\right)^{*}$ as
the space of\textbf{ $k\text{-}$currents} (or \textbf{distributional
$k$}-\textbf{forms}), equipped with the weak{*} topology.
\item $\mathscr{D}_{N}\Omega^{k}=\{\omega\in\Omega^{k}:\mathbf{n}\Delta^{m}\omega=0,\mathbf{n}d\Delta^{m}\omega=0\;\forall m\in\mathbb{N}_{0}\}$
as the space of \textbf{heated $k$-forms }with the Frechet $C^{\infty}$
topology and $\mathscr{D}'_{N}\Omega^{k}=\left(\mathscr{D}_{N}\Omega^{k}\right)^{*}$
as the space of\textbf{ heatable $k$-currents} (or \textbf{heatable
distributional $k$}-\textbf{forms}) with the weak{*} topology.
\end{itemize}
In particular, $\mathscr{D}_{N}\mathfrak{X}$ is defined from $\mathscr{D}_{N}\Omega^{1}$
by the musical isomorphism, and it is invariant under our heat flow
(much like how the space of Schwartz functions $\mathcal{S}(\mathbb{R}^{n})$
is invariant under the Littlewood-Paley projection). By that analogy,
heatable currents are tempered distributions on manifolds, and we
can write 
\[
\left\langle \left\langle S(t)\Lambda,X\right\rangle \right\rangle =\left\langle \left\langle \Lambda,S\left(t\right)X\right\rangle \right\rangle \;\forall\Lambda\in\mathscr{D}'_{N}\mathfrak{X},\forall X\in\mathscr{D}_{N}\mathfrak{X},\forall t\geq0
\]
where the dot product $\left\langle \left\langle \cdot,\cdot\right\rangle \right\rangle $
is simply abuse of notation.
\end{defn}

\begin{fact}
\label{fact:D_N-basic-properties}Some basic properties of $\mathscr{D}_{N}\Omega\left(M\right)$
and $\mathscr{D}'_{N}\Omega\left(M\right)$:
\begin{itemize}
\item $\left\langle \left\langle \Delta X,Y\right\rangle \right\rangle =\left\langle \left\langle X,\Delta Y\right\rangle \right\rangle \;\forall X,Y\in\mathscr{D}_{N}\mathfrak{X}$.
\item $\mathscr{D}\Omega\hookrightarrow\mathscr{D}_{N}\Omega$ and $L^{p}\Omega\hookrightarrow\mathscr{D}'_{N}\Omega$
$\forall p\in\left(1,\infty\right)$. 
\item $S(t)\Lambda\in\mathscr{D}_{N}\Omega$ $\forall t>0,\forall\Lambda\in\mathscr{D}'_{N}\Omega$.
(a heatable current becomes heated once the heat flow is applied)
\item $W^{1,p}\text{-}\mathrm{cl}\left(\mathscr{D}_{N}\Omega\right)=W^{1,p}\Omega_{N}$
and $B_{p,1}^{\frac{1}{p}}\text{-}\mathrm{cl}\left(\mathbb{P}\mathscr{D}_{N}\Omega\right)=\mathbb{P}B_{p,1}^{\frac{1}{p}}\Omega_{N}$
$\forall p\in\left(1,\infty\right)$ 
\item $\forall X\in\mathscr{D}_{N}\Omega:S(t)X\xrightarrow[t\downarrow0]{C^{\infty}}X$
and $\partial_{t}\left(S(t)X\right)=\Delta S(t)X=S(t)\Delta X\;\forall t\geq0.$ 
\item $\forall t\in(0,1),\forall m,m'\in\mathbb{N}_{0},\forall p\in(1,\infty),\forall X\in\mathscr{D}_{N}\Omega:$
\begin{enumerate}
\item $\left\Vert S(t)X\right\Vert _{W^{m+m',p}}\lesssim\left(\frac{1}{t}\right)^{\frac{m'}{2}}\left\Vert X\right\Vert _{W^{m,p}}$
\item $\left\Vert S(t)X\right\Vert _{W^{m,p}}\lesssim\left(\frac{1}{t}\right)^{\frac{1}{2}\left(m-\frac{1}{p}\right)}\left\Vert X\right\Vert _{B_{p,1}^{\frac{1}{p}}}$
when $m\geq1$
\item $\left\Vert S(t)X\right\Vert _{B_{p,1}^{m+m'+\frac{1}{p}}}\lesssim\left(\frac{1}{t}\right)^{\frac{1}{2p}+\frac{m'}{2}}\left\Vert X\right\Vert _{W^{m,p}}$
\end{enumerate}
\end{itemize}
\end{fact}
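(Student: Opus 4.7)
The plan is to peel off the bullets in order, with the self-adjoint Neumann extension $\widetilde{\Delta_N}$ doing most of the work. The self-adjointness $\left\langle\left\langle\Delta X,Y\right\rangle\right\rangle=\left\langle\left\langle X,\Delta Y\right\rangle\right\rangle$ on $\mathscr{D}_N\mathfrak{X}$ is an integration-by-parts exercise: writing $\Delta=-(d\delta+\delta d)$ and applying Stokes produces boundary integrals controlled by $\mathbf{n}X,\mathbf{n}dX,\mathbf{n}Y,\mathbf{n}dY$, all of which vanish by the very definition of $\mathscr{D}_N$. The embedding $\mathscr{D}\Omega\hookrightarrow\mathscr{D}_N\Omega$ is immediate at the set level (compact support inside $\accentset{\circ}{M}$ forces every boundary trace to vanish) and continuous because the colimit $C^\infty$-topology on $\mathscr{D}\Omega$ is finer than the restriction of the Frechet $C^\infty$-topology. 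For $L^p\Omega\hookrightarrow\mathscr{D}'_N\Omega$, boundedness of the pairing on $\mathscr{D}_N\Omega\subset C^\infty\subset L^{p'}$ gives continuity, and injectivity follows because $\mathscr{D}\Omega\subset\mathscr{D}_N\Omega$, so an $L^p$-form killing $\mathscr{D}_N\Omega$ kills all test forms and is therefore a.e.\ zero.

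The central move is extending $S(t)$ to $\mathscr{D}'_N\Omega$ and upgrading the output to $\mathscr{D}_N\Omega$. First one checks $S(t)(\mathscr{D}_N\Omega)\subset\mathscr{D}_N\Omega$ by identifying $\mathscr{D}_N\Omega$ with $C^\infty\cap D(\widetilde{\Delta_N}^\infty)$ (absolute boundary conditions at every order) and using that $S(t)$ preserves the domain tower of its self-adjoint generator. Then define $\left\langle\left\langle S(t)\Lambda,X\right\rangle\right\rangle:=\left\langle\left\langle\Lambda,S(t)X\right\rangle\right\rangle$ for $X\in\mathscr{D}_N\Omega$. To show $S(t)\Lambda$ is actually represented by a smooth form, split $S(t)=S(t/2)\circ S(t/2)$: continuity of $\Lambda$ on the Frechet space $\mathscr{D}_N\Omega$ gives $|\left\langle\Lambda,Y\right\rangle|\lesssim\|Y\|_{W^{N,p'}}$ for some $N,p'$; composing with the smoothing bound of Fact~\ref{fact:Fact_heat}(2c) makes $X\mapsto\left\langle\Lambda,S(t/2)X\right\rangle$ a bounded linear functional on $L^{p'}$, so by Riesz $S(t/2)\Lambda$ is realized by an $L^p$-element; a second $S(t/2)$ then lifts this into $C^\infty\cap D(\widetilde{\Delta_N}^\infty)=\mathscr{D}_N\Omega$ by iterating the smoothing estimates.

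With invariance in hand, the density statements follow by approximation. Given $X\in W^{1,p}\Omega_N$, Fact~\ref{fact:Fact_heat}(3) gives $S(t)X\to X$ in $W^{1,p}$ with each $S(t)X\in\mathscr{D}_N\Omega$; the reverse inclusion uses that $W^{1,p}\Omega_N$ is closed by trace continuity (Fact~\ref{fact:fact_trace}). For the Besov variant, apply $\mathbb{P}S(t)=S(t)\mathbb{P}$ (Fact~\ref{fact:Fact_heat}(4)): for $X\in\mathbb{P}B_{p,1}^{1/p}\Omega_N$ the approximants $S(t)X=\mathbb{P}S(t)X\in\mathbb{P}\mathscr{D}_N\Omega$ converge in $B_{p,1}^{1/p}$, which requires an analogue of Fact~\ref{fact:Fact_heat}(3) at Besov regularity — obtained by real interpolation between the $L^p$ and $W^{1,p}$ convergence statements. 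The $C^\infty$-convergence $S(t)X\to X$ for $X\in\mathscr{D}_N\Omega$ follows by applying $\Delta^m$ (which commutes with $S(t)$ on the invariant core) and using the $L^p$ convergence from Fact~\ref{fact:Fact_heat}(2d), then smoothing back up via (2c). The identities $\partial_t(S(t)X)=\Delta S(t)X=S(t)\Delta X$ are then the usual infinitesimal-generator statements on the invariant core $\mathscr{D}_N\Omega$.

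The three quantitative bounds are semigroup bookkeeping on top of Fact~\ref{fact:Fact_heat}. For (a), write $S(t)X=S(t/2)S(t/2)X$, commute $\nabla^{m'}$ past the first factor using $\Delta$-commutation and the Weitzenbock schematic (\ref{eq:Weitzen_schematic}), then apply (2c) to the second factor. For (b), combine (a) at $m=1$ with the trace surjection (Fact~\ref{fact:fact_trace}): duality and the continuous map $B_{p,1}^{1/p}\twoheadrightarrow L^p(\partial M)$ yield the $B_{p,1}^{1/p}$-input estimate via the heat flow. For (c), upgrade (a) to Besov output via $\|S(t)Y\|_{B_{p,1}^{s+1/p}}\lesssim t^{-1/(2p)}\|Y\|_{W^{s,p}}$, itself derived by applying the trace-theorem surjection to $\sqrt{t}$-scaled derivatives. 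The hardest step is the invariance and smoothing of $S(t)\Lambda$ for $\Lambda\in\mathscr{D}'_N\Omega$ without invoking a heat kernel: this requires the domain identification $\mathscr{D}_N\Omega=C^\infty\cap D(\widetilde{\Delta_N}^\infty)$ and a bootstrap through the semigroup smoothing, while everything else is a routine consequence once that core fact is established.
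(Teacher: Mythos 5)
This is one of the items the paper labels a \emph{fact} and explicitly cites from the prequel \parencite[Subsection 3.1]{huynhHodgetheoreticAnalysisManifolds2019}; there is no proof in the present paper, so I cannot compare your route against the authors'. Taking your proposal on its own terms: the core organization is sound. The symmetry of $\Delta$ on $\mathscr{D}_{N}\mathfrak{X}$ by Green's formula and the iterated absolute boundary conditions, the two embeddings, the identification $\mathscr{D}_{N}\Omega=C^{\infty}\cap D(\widetilde{\Delta_{N}}^{\infty})$, and above all the bootstrap that splits $S(t)=S(t/2)\circ S(t/2)$, uses Fréchet continuity of $\Lambda$ plus the smoothing estimate of \Factref{Fact_heat}(2c) to land in $L^{p}$, then one more half-step to land in $\mathscr{D}_{N}\Omega$ --- this is the right and essentially standard argument for the invariance and smoothing bullets. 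The $W^{1,p}$-density via $S(t)X\to X$ and closedness of $W^{1,p}\Omega_{N}$ under the trace map, the $C^{\infty}$-convergence via $\Delta^{m}$-commutation, and part (a) via $S(t)=S(t/2)^{2}$ plus Weitzenb\"ock bookkeeping are all reasonable.

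The place where the sketch is genuinely underspecified is parts (b) and (c), where you invoke ``duality and the trace surjection'' and ``applying the trace-theorem surjection to $\sqrt{t}$-scaled derivatives.'' I do not see how the surjection $B_{p,1}^{1/p}(M)\twoheadrightarrow L^{p}(\partial M)$, applied directly, produces the stated operator norms for $S(t)$. The natural mechanism here is real interpolation of the BC-constrained couple. For (b) one interpolates the source: $S(t)\colon L^{p}\to W^{m,p}$ with constant $t^{-m/2}$ and $S(t)\colon W^{1,p}\Omega_{N}\to W^{m,p}$ with constant $t^{-(m-1)/2}$ give $S(t)\colon\bigl(L^{p},W^{1,p}\Omega_{N}\bigr)_{1/p,1}\to W^{m,p}$ with constant $t^{-(m-1/p)/2}$. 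For (c) one interpolates the target: $S(t)\colon W^{m,p}\Omega_{N}\to W^{m,p}$ and $\to W^{m+1,p}$ with constants $1$ and $t^{-1/2}$ give $S(t)\colon W^{m,p}\Omega_{N}\to\bigl(W^{m,p},W^{m+1,p}\bigr)_{1/p,1}=B_{p,1}^{m+1/p}$ with constant $t^{-1/(2p)}$, and then part (a) supplies the extra $m'$ derivatives. The non-trivial point buried in (b) --- and not addressed by your sketch --- is the comparison between $\bigl(L^{p},W^{1,p}\Omega_{N}\bigr)_{1/p,1}=\widehat{B}_{p,1}^{1/p}$ and the unconstrained $B_{p,1}^{1/p}$, i.e.\ why $\|X\|_{\widehat{B}_{p,1}^{1/p}}\lesssim\|X\|_{B_{p,1}^{1/p}}$ for $X\in\mathscr{D}_{N}\Omega$. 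Because $\theta=1/p$ is exactly the borderline exponent at which the Neumann trace constraint in $W^{1,p}\Omega_{N}$ begins to matter (this is where the trace theorem actually enters the story), this identity is delicate and needs a genuine argument; as written, ``trace surjection and duality'' is a pointer, not a proof. The smaller statement you need for the Besov-closure bullet --- that $S(t)X\to X$ in $B_{p,1}^{1/p}$ for $X\in\mathbb{P}B_{p,1}^{1/p}\Omega_{N}$ --- does follow cleanly from \Lemref{dense_conv} plus uniform $S(t)$-boundedness on $L^{p}$ and $W^{1,p}\Omega_{N}$, so you should lean on that lemma explicitly rather than gesture at ``an analogue of \Factref{Fact_heat}(3) at Besov regularity.''
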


\subsection{Heating the nonlinear term}

\label{subsec:heating_nonlinear} Recall integration by parts:
\[
\left\langle \left\langle \Div\left(Y\otimes Z\right),X\right\rangle \right\rangle =-\left\langle \left\langle Y\otimes Z,\nabla X\right\rangle \right\rangle +\int_{\partial M}\left\langle \nu,Y\right\rangle \left\langle Z,X\right\rangle \;\forall X,Y,Z\in\mathfrak{X}\left(M\right)
\]
Let $U,V\in B_{3,1}^{\frac{1}{3}}\mathfrak{X}$. Then $U\otimes V\in L^{1}\mathfrak{X}$
and $\Div\left(U\otimes V\right)$ is defined as a distribution. So
we will define the heatable $1$-current $\left(\Div\left(U\otimes V\right)\right)^{\flat}$
by
\[
\left\langle \left\langle \Div\left(U\otimes V\right),X\right\rangle \right\rangle :=-\left\langle \left\langle U\otimes V,\nabla X\right\rangle \right\rangle +\int_{\partial M}\left\langle \nu,U\right\rangle \left\langle V,X\right\rangle \;\forall X\in\mathscr{D}_{N}\mathfrak{X}\;(X\text{ is heated})
\]
It is continuous on $\mathscr{D}_{N}\mathfrak{X}$ since $\left|\left\langle \left\langle \Div\left(U\otimes V\right),X\right\rangle \right\rangle \right|\lesssim\left\Vert U\right\Vert _{B_{3,1}^{\frac{1}{3}}}\left\Vert V\right\Vert _{B_{3,1}^{\frac{1}{3}}}\left\Vert X\right\Vert _{B_{3,1}^{\frac{1}{3}}}+\left\Vert U\right\Vert _{L^{3}}\left\Vert V\right\Vert _{L^{3}}\left\Vert \nabla X\right\Vert _{L^{3}}$.
By the same formula and reasoning, we see that $\left(\Div\left(U\otimes V\right)\right)^{\flat}$
is not just heatable, but also a continuous linear functional on $\left(\mathfrak{X}\left(M\right),C^{\infty}\text{ topo}\right)$.

On the other hand, we can get away with less regularity by assuming
$U\in\mathbb{P}L^{2}\mathfrak{X}$ and $V\in L^{2}\mathfrak{X}$.
Then $\left(\Div\left(U\otimes V\right)\right)^{\flat}$ is heatable
as we simply need to define 
\begin{equation}
\left\langle \left\langle \Div\left(U\otimes V\right),X\right\rangle \right\rangle =-\left\langle \left\langle U\otimes V,\nabla X\right\rangle \right\rangle \;\forall X\in\mathfrak{X}\label{eq:int_parts}
\end{equation}

\subsection{\label{subsec:Besov-spaces}Besov spaces}

For the rest of the proof, we will write $e^{t\Delta}$ for the absolute
Neumann heat flow, as we will not need another heat flow. For $\varepsilon>0$
and vector field $X$, we will write $X^{\varepsilon}$ for $e^{\varepsilon\Delta}X$.
\nomenclature{$e^{t\Delta}$}{the absolute Neumann heat flow, defined for the proof of Onsager's conjecture\nomrefpage}

Now we define a crude version of the Littlewood-Paley projections:
$P_{\leq t}=e^{\frac{1}{t^{2}}\Delta}$ for $t>0$ and $P_{N}=P_{\leq N}-P_{\leq\frac{N}{2}}$
for $N>1,N\in2^{\mathbb{Z}}.$ 

The definition of $P_{\leq t}$ gives a quick Bernstein estimate:
\begin{thm}
\label{thm:global_bernstein}For $N\geq1$ and $X\in\mathscr{D}'_{N}\Omega^{k}$,
$\left\Vert P_{N}X\right\Vert _{p}\lesssim\frac{1}{N^{2}}\left\Vert P_{\leq\sqrt{2}N}X\right\Vert _{W^{2,p}}\lesssim\frac{1}{N}\left\Vert P_{\leq2N}X\right\Vert _{W^{1,p}}$
\end{thm}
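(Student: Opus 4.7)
The strategy is the standard Littlewood--Paley gymnastics adapted to the heat-flow Littlewood--Paley projection: for the first inequality use the fundamental theorem of calculus in the heat-time variable to expose a factor of $1/N^{2}$, and for the second use the smoothing estimate of the heat flow to trade one derivative for a factor of $N$. The key algebraic tool is the semigroup identity, which lets us factor $P_{\leq t}$ as $e^{s\Delta}\circ P_{\leq t'}$ whenever $s=1/t^{2}-1/(t')^{2}\geq 0$.

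For the first inequality, use $P_{\leq\sqrt{2}N}=e^{(1/(2N^{2}))\Delta}$ to rewrite
\[
P_{\leq N}\;=\;e^{(1/(2N^{2}))\Delta}\,P_{\leq\sqrt{2}N},\qquad P_{\leq N/2}\;=\;e^{(7/(2N^{2}))\Delta}\,P_{\leq\sqrt{2}N},
\]
so that the fundamental theorem of calculus gives
\[
P_{N}X\;=\;-\!\int_{1/(2N^{2})}^{7/(2N^{2})}\!\!\Delta\,e^{s\Delta}\bigl(P_{\leq\sqrt{2}N}X\bigr)\,ds\;=\;-\!\int_{1/(2N^{2})}^{7/(2N^{2})}\!\!e^{s\Delta}\,\Delta\bigl(P_{\leq\sqrt{2}N}X\bigr)\,ds,
\]
where the second equality uses the commutativity of $\Delta$ with the heat flow on $\mathscr{D}_{N}$ (\Factref{D_N-basic-properties}). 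Since $e^{s\Delta}$ is $L^{p}$-bounded (\Factref{Fact_heat}(2)(c), $m=0$), pulling the norm inside and using that the length of the time interval is $3/N^{2}$ yields $\|P_{N}X\|_{p}\lesssim (1/N^{2})\|\Delta P_{\leq\sqrt{2}N}X\|_{p}$; the Weitzenbock formula (\ref{eq:Weitzen_schematic}) then controls $\|\Delta Y\|_{p}$ by $\|Y\|_{W^{2,p}}$, giving the first inequality.

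For the second inequality, apply the semigroup identity the other way, $P_{\leq\sqrt{2}N}=e^{(1/(4N^{2}))\Delta}\circ P_{\leq 2N}$. Because $X\in\mathscr{D}'_{N}$ implies $P_{\leq 2N}X\in\mathscr{D}_{N}\subset W^{1,p}\Omega^{k}_{N}$, \Factref{Fact_heat}(3)(a) with $m=1$ and $t=1/(4N^{2})$ immediately yields
\[
\|P_{\leq\sqrt{2}N}X\|_{W^{2,p}}\;=\;\|e^{(1/(4N^{2}))\Delta}P_{\leq 2N}X\|_{W^{2,p}}\;\lesssim\;N\,\|P_{\leq 2N}X\|_{W^{1,p}},
\]
and combining this with the first inequality produces the second. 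There is no real obstacle: the only piece of bookkeeping is that the smoothing estimate in \Factref{Fact_heat}(3)(a) requires a Neumann-type boundary condition on its input, which is automatic here since $\mathscr{D}_{N}$ is invariant under the heat flow and its elements satisfy $\mathbf{n}\Delta^{m}\omega=0$ by definition, so $P_{\leq 2N}X$ qualifies.
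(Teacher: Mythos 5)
Your proof is correct and follows essentially the same route as the paper's (which is stated as a one-line hint: the FTC identity $P_{N}X=\int_{7/(2N^{2})}^{1/(2N^{2})}\Delta e^{t\Delta}P_{\leq\sqrt{2}N}X\,\mathrm{d}t$ together with the semigroup factorization $P_{\leq\sqrt{2}N}=P_{\leq 2N}P_{\leq 2N}$). You have correctly filled in the details, including the key point that $P_{\leq 2N}X\in\mathscr{D}_{N}$ so the Neumann-conditioned smoothing estimate of Fact~\ref{fact:Fact_heat}(3)(a) applies; the invocation of Weitzenbock for the bound $\|\Delta Y\|_{p}\lesssim\|Y\|_{W^{2,p}}$ is unnecessary (this is immediate since $\Delta$ is a second-order operator with smooth coefficients on a compact manifold) but harmless.
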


\begin{proof}
Recall that $e^{\varepsilon\Delta}X\in\mathscr{D}{}_{N}\Omega^{k}$
$\forall\varepsilon>0$. Then observe that
\[
P_{N}X=\left(\exp\left(\frac{\Delta}{2N^{2}}\right)-\exp\left(\frac{7\Delta}{2N^{2}}\right)\right)\exp\left(\frac{\Delta}{2N^{2}}\right)X=\int_{\frac{7}{2N^{2}}}^{\frac{1}{2N^{2}}}\Delta e^{t\Delta}\exp\left(\frac{\Delta}{2N^{2}}\right)X\;\mathrm{d}t
\]
 and $P_{\leq\sqrt{2}N}=P_{\leq2N}P_{\leq2N}$.
\end{proof}
\begin{defn}
For $\alpha\in(0,1)$, $p\in\left(1,\infty\right),$ $q\in\left[1,\infty\right]$,
we define the \textbf{Besov heat space} $\widehat{B}_{p,q}^{\alpha}\Omega^{k}$
as the space of heatable $k$-currents $X$ where the norm 
\begin{align*}
\left\Vert X\right\Vert _{\widehat{B}_{p,q}^{\alpha}} & =\left\Vert X\right\Vert _{L^{p}}+\left\Vert s^{\frac{1}{2}\left(1-\alpha\right)}\left\Vert e^{s\Delta}X\right\Vert _{W^{1,p}}\right\Vert _{L^{q}\left(\frac{\mathrm{d}s}{s},\left(0,1\right)\right)}\\
 & \sim\left\Vert X\right\Vert _{L^{p}}+\left\Vert N^{\alpha-1}\left\Vert P_{\leq N}X\right\Vert _{W^{1,p}}\right\Vert _{l_{N}^{q}\left(N\in2^{\mathbb{Z}},N>1\right)}
\end{align*}
is finite. 
\end{defn}

Recall the theory of real interpolation. The following fact justifies
the name ``Besov'' in Besov heat space:
\begin{thm}
$\left[L^{p}\Omega^{k},W^{1,p}\Omega_{N}^{k}\right]_{\theta,q}=\widehat{B}_{p,q}^{\theta}\Omega^{k}$
for $q\in[1,\infty],p\in\left(1,\infty\right),\theta\in\left(0,1\right)$.
\end{thm}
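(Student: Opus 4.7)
The plan is to use the K-functional characterization of real interpolation:
$$\|X\|_{[L^p\Omega^k,\,W^{1,p}\Omega_N^k]_{\theta,q}}\sim \|t^{-\theta}K(t,X)\|_{L^q(dt/t,(0,\infty))},$$
where $K(t,X)=\inf_{X=X_0+X_1}(\|X_0\|_{L^p}+t\|X_1\|_{W^{1,p}})$ with $X_1\in W^{1,p}\Omega_N^k$. Setting $f(s):=\|e^{s\Delta}X\|_{W^{1,p}}$ and changing variables $u=t^2$ (so $dt/t=du/(2u)$), I want to match this with $\|u^{(1-\theta)/2}f(u)\|_{L^q(du/u,(0,1))}+\|X\|_{L^p}$, which is the $\widehat{B}_{p,q}^\theta$-norm.

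For the inclusion $\widehat{B}_{p,q}^\theta\hookrightarrow[L^p,W^{1,p}\Omega_N^k]_{\theta,q}$, I use the canonical decomposition $X=(X-e^{t^2\Delta}X)+e^{t^2\Delta}X$. The second summand lies in $W^{1,p}\Omega_N^k$ by \Factref{Fact_heat}. For the first summand, the FTC gives $X-e^{t^2\Delta}X=-\int_0^{t^2}\Delta e^{s\Delta}X\,ds$. I estimate the integrand by splitting $e^{s\Delta}=e^{(s/2)\Delta}\circ e^{(s/2)\Delta}$ and applying \Factref{Fact_heat}.3(a) with $m=1$ to $Y:=e^{(s/2)\Delta}X\in W^{1,p}\Omega_N^k$, yielding $\|\Delta e^{s\Delta}X\|_{L^p}\lesssim s^{-1/2}f(s/2)$. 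Combining,
$$K(t,X)\lesssim\int_0^{t^2}s^{-1/2}f(s/2)\,ds+t\,f(t^2).$$
A change of variables reduces the first term to $\int_0^{t^2/2}u^{-1/2}f(u)\,du$, and Hardy's inequality in the weighted space $L^q(du/u)$ (valid since $\theta/2>0$) bounds it by $\|u^{(1-\theta)/2}f(u)\|_{L^q(du/u)}$; the second term is already in this form. The tail $t>1$ is handled by the trivial estimate $K(t,X)\leq\|X\|_{L^p}$.

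For the reverse inclusion, fix any decomposition $X=X_0+X_1$ with $X_1\in W^{1,p}\Omega_N^k$. By \Factref{Fact_heat}.2(c) ($m=1$) applied to $X_0$, and \Factref{Fact_heat}.3(a) ($m=0$) applied to $X_1$, for $s\in(0,1)$:
$$f(s)\leq\|e^{s\Delta}X_0\|_{W^{1,p}}+\|e^{s\Delta}X_1\|_{W^{1,p}}\lesssim s^{-1/2}\|X_0\|_{L^p}+\|X_1\|_{W^{1,p}}=s^{-1/2}(\|X_0\|_{L^p}+\sqrt{s}\|X_1\|_{W^{1,p}}).$$
Optimizing over decompositions yields $\sqrt{s}\,f(s)\lesssim K(\sqrt{s},X)$, so $\|s^{(1-\theta)/2}f(s)\|_{L^q(ds/s,(0,1))}\lesssim\|u^{-\theta}K(u,X)\|_{L^q(du/u,(0,1))}$ after $u=\sqrt{s}$, and the $\|X\|_{L^p}$-piece is controlled by the interpolation norm via the embedding $W^{1,p}\Omega_N^k\hookrightarrow L^p$.

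No serious obstacle arises; the routine care points are (i) verifying that $e^{s/2\,\Delta}X\in W^{1,p}\Omega_N^k$ so that the Neumann-regularity estimate 3(a) of \Factref{Fact_heat} is available, (ii) reconciling the integration intervals — $(0,1)$ in the definition of $\widehat{B}_{p,q}^\theta$ versus $(0,\infty)$ in the K-functional norm — which is done by the trivial tail bound $K(t,X)\leq\|X\|_{L^p}$, and (iii) correctly tracking the Jacobian $du/u=ds/(2s)$ when switching between the $t$-scale (K-functional) and the $s$-scale (heat-semigroup time).
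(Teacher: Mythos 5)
Your proof is correct, and the reverse inclusion $[L^{p},W^{1,p}\Omega_{N}^{k}]_{\theta,q}\hookrightarrow\widehat{B}_{p,q}^{\theta}$ is essentially identical to the paper's (estimate $\left\Vert e^{s\Delta}X\right\Vert _{W^{1,p}}$ against $K(\sqrt{s},X)$ using the two smoothing estimates in \Factref{Fact_heat}). For the forward inclusion $\widehat{B}_{p,q}^{\theta}\hookrightarrow[L^{p},W^{1,p}\Omega_{N}^{k}]_{\theta,q}$, however, you take a genuinely different route: you stay on the K-side, decompose $X=(X-e^{t^{2}\Delta}X)+e^{t^{2}\Delta}X$, express the low-regularity piece by FTC as $-\int_{0}^{t^{2}}\Delta e^{s\Delta}X\,\mathrm{d}s$, and then close the estimate with Hardy's inequality on $L^{q}(\mathrm{d}s/s)$ (valid since $\theta>0$). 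The paper instead uses the J-method with the discrete Littlewood--Paley decomposition $X=P_{\leq1}X+\sum_{N>1}P_{N}X$ and the Bernstein estimate of \Thmref{global_bernstein} to control each dyadic piece. Both arguments are standard real-interpolation techniques; yours is more "continuous" and avoids invoking the J-method machinery at the cost of needing Hardy's inequality and a slightly more delicate justification of the FTC identity (the convergence of $\int_{0}^{t^{2}}\left\Vert \Delta e^{s\Delta}X\right\Vert _{L^{p}}\mathrm{d}s$ near $s=0$, which you correctly ensure via the smoothing estimate $\left\Vert \Delta e^{s\Delta}X\right\Vert _{L^{p}}\lesssim s^{-1/2}\left\Vert e^{(s/2)\Delta}X\right\Vert _{W^{1,p}}$ together with the finiteness of the $\widehat{B}_{p,q}^{\theta}$-norm), whereas the paper's route keeps everything at the level of dyadic sums once the Bernstein estimate is in hand. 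Your bookkeeping of the Jacobian in the change of variables $u=t^{2}$ and of the $t>1$ tail is also correct.
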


\begin{proof}
By definition, $\widehat{B}_{p,q}^{\theta}\Omega^{k}\hookrightarrow L^{p}\Omega^{k}$.
We first show $\widehat{B}_{p,q}^{\theta}\Omega^{k}\hookrightarrow\left[L^{p}\Omega^{k},W^{1,p}\Omega_{N}^{k}\right]_{\theta,q}$.

Assume $\left\Vert X\right\Vert _{\widehat{B}_{p,q}^{\theta}}\leq1$.
Then we decompose $X\stackrel{L^{p}}{=}P_{\leq1}X+\sum_{N>1,N\in2^{\mathbb{Z}}}P_{N}X$
. Set $X_{0}=P_{\leq1}X$ and $X_{k}=P_{2^{-k}}X$ $\forall k\in\mathbb{Z},k\leq-1$.
Then by the J-method, and the fact that $X=\sum_{k\leq0}X_{k}$, we
have
\begin{align*}
\left\Vert X\right\Vert _{\left[L^{p}\Omega^{k},W^{1,p}\Omega_{N}^{k}\right]_{\theta,q}} & \lesssim\left\Vert 2^{-k\theta}\left\Vert X_{k}\right\Vert _{L^{p}}+2^{k\left(1-\theta\right)}\left\Vert X_{k}\right\Vert _{W^{1,p}}\right\Vert _{l_{k}^{q}\left(k\leq0\right)}\\
 & \lesssim\left\Vert X\right\Vert _{L^{p}}+\left\Vert \left(\frac{1}{2}\right)^{-m\theta}\left\Vert P_{2^{m}}X\right\Vert _{L^{p}}+\left(\frac{1}{2}\right)^{m\left(1-\theta\right)}\left\Vert P_{2^{m}}X\right\Vert _{W^{1,p}}\right\Vert _{l_{m}^{q}\left(m\geq1\right)}\\
 & \lesssim\left\Vert X\right\Vert _{L^{p}}+\left\Vert \left(\frac{1}{2}\right)^{m\left(1-\theta\right)}\left\Vert P_{\leq2^{m+1}}X\right\Vert _{W^{1,p}}\right\Vert _{l_{m}^{q}\left(m\geq1\right)}\lesssim1
\end{align*}
Now we will show $\left[L^{p}\Omega^{k},W^{1,p}\Omega_{N}^{k}\right]_{\theta,q}\hookrightarrow\widehat{B}_{p,q}^{\theta}\Omega^{k}$.
Assume $\left\Vert Y\right\Vert _{\left[L^{p}\Omega^{k},W^{1,p}\Omega_{N}^{k}\right]_{\theta,q}}\leq1$,
then $\left\Vert Y\right\Vert _{L^{p}}\lesssim1$. We will use the
K-method: for any $N\geq1,Y_{0}\in L^{p}\Omega^{k},Y_{1}\in W^{1,p}\Omega_{N}^{k}$
such that $Y=Y_{0}+Y_{1},$ we have
\[
\left\Vert P_{\leq N}Y\right\Vert _{W^{1,p}}\leq\left\Vert P_{\leq N}Y_{0}\right\Vert _{W^{1,p}}+\left\Vert P_{\leq N}Y_{1}\right\Vert _{W^{1,p}}\lesssim N\left\Vert Y_{0}\right\Vert _{L^{p}}+\left\Vert Y_{1}\right\Vert _{W^{1,p}}
\]
Note that this is why we need $W^{1,p}\Omega_{N}^{k}$ instead of
$W^{1,p}\Omega^{k}$. Then
\[
N^{\theta-1}\left\Vert P_{\leq N}Y\right\Vert _{W^{1,p}}\lesssim\inf_{Y_{0}+Y_{1}=Y}N^{\theta}\left\Vert Y_{0}\right\Vert _{L^{p}}+N^{\theta-1}\left\Vert Y_{1}\right\Vert _{W^{1,p}}=N^{\theta}K\left(\frac{1}{N},Y\right)
\]
so 
\[
\left\Vert N^{\theta-1}\left\Vert P_{\leq N}Y\right\Vert _{W^{1,p}}\right\Vert _{l_{N}^{q}\left(N\in2^{\mathbb{Z}},N>1\right)}\lesssim\left\Vert N^{-\theta}K\left(N,Y\right)\right\Vert _{l_{N}^{q}\left(N\in2^{\mathbb{Z}},N<1\right)}\leq1
\]
\end{proof}
\begin{rem}
We recover the standard Besov space when the manifold is boundaryless,
effectively generalizing the proof in \parencite[Appendix B]{isettHeatFlowApproach2014}.
More importantly, in the case with boundary, we have 
\[
\mathbb{P}B_{3,q}^{\frac{1}{3}}\Omega^{k}=\left[\mathbb{P}L^{3}\Omega^{k},\mathbb{P}W^{1,p}\Omega^{k}\right]_{1/3,q}=\left[\mathbb{P}L^{3}\Omega^{k},\mathbb{P}W^{1,p}\Omega_{N}^{k}\right]_{1/3,q}=\mathbb{P}\widehat{B}_{3,q}^{\frac{1}{3}}\Omega^{k}
\]
for $q\in\left[1,\infty\right]$. The fact that we need to apply the
Leray projection is an important technicality.
\end{rem}

\begin{defn}
\label{def:besov_v_space}For $p\in\left(1,\infty\right)$, we say
$X\in\widehat{B}_{p,V}^{1/p}\mathfrak{X}\left(M\right)$ if $X\in L^{p}\mathfrak{X}\left(M\right)$
and $\forall r>0:$
\begin{equation}
N^{\frac{1}{p}-1}\left\Vert P_{\leq N}X\right\Vert _{W^{1,p}\left(M_{>r}\right)}\xrightarrow{N\to\infty}0\label{eq:temp_3r}
\end{equation}
Similarly, we say $\mathcal{X}\in L_{t}^{p}\widehat{B}_{p,V}^{1/p}\mathfrak{X}\left(M\right)$
if $\mathcal{X}\in L_{t}^{p}L^{p}\mathfrak{X}\left(M\right)$ and
$\forall r>0:$
\begin{equation}
N^{\frac{1}{p}-1}\left\Vert P_{\leq N}\mathcal{X}\right\Vert _{L_{t}^{p}W^{1,p}\left(M_{>r}\right)}\xrightarrow{N\to\infty}0\label{eq:vanishing_property}
\end{equation}
\end{defn}

\begin{rem*}
The vanishing property in (\ref{eq:vanishing_property}) becomes important
for the commutator estimate in Onsager's conjecture at the critical
regularity $\frac{1}{3}$, while higher regularity has enough room
for vanishing in norm (which is better).

It is shown in \Corref{equiv_cN} that (\ref{eq:temp_3r}) is equivalent
to 
\[
N^{\frac{1}{p}}\left\Vert P_{>N}X\right\Vert _{L^{p}\left(M_{>r}\right)}\xrightarrow{N\to\infty}0\;\forall r>0
\]
\end{rem*}
We briefly note that when $\partial M=\emptyset,$ it is customary
to set $\mathrm{dist}\left(x,\partial M\right)=\infty$, $M_{>r}=M=\accentset{\circ}{M}$,
$M_{<r}=\emptyset$, and $\mathscr{D}_{N}\mathfrak{X}M=\mathscr{D}\mathfrak{X}M=\mathfrak{X}M$.

Recall the space $\widehat{B}_{3,c(\mathbb{N})}^{1/3}\mathfrak{X}=\widehat{B}_{3,\infty}^{1/3}\text{-cl}\left(\mathscr{D}_{N}\mathfrak{X}\right)$
from \parencite{isettHeatFlowApproach2014}. 
\begin{lem}
\label{lem:equal_isett}$\widehat{B}_{3,c(\mathbb{N})}^{1/3}\mathfrak{X}\hookrightarrow\widehat{B}_{3,V}^{1/3}\mathfrak{X}$.
When $\partial M=\emptyset$, $\widehat{B}_{3,V}^{1/3}\mathfrak{X}=\widehat{B}_{3,c(\mathbb{N})}^{1/3}\mathfrak{X}$. 
\end{lem}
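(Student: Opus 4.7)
\textbf{Embedding $\widehat{B}_{3,c(\mathbb{N})}^{1/3}\mathfrak{X}\hookrightarrow\widehat{B}_{3,V}^{1/3}\mathfrak{X}$.} My plan is to pass to the approximating sequence and exploit smoothness. Given $X$ in the $\widehat{B}_{3,\infty}^{1/3}$-closure of $\mathscr{D}_{N}\mathfrak{X}$, pick $X_{j}\in\mathscr{D}_{N}\mathfrak{X}$ with $\|X-X_{j}\|_{\widehat{B}_{3,\infty}^{1/3}}\to 0$. Each $X_{j}$ is smooth and lies in $W^{1,3}\mathfrak{X}_{N}$, so the $W^{1,3}$-continuity of the heat semigroup on $W^{1,3}\mathfrak{X}_{N}$ (item 3(b) of \ref{fact:Fact_heat}) gives $P_{\leq N}X_{j}\to X_{j}$ in $W^{1,3}$, hence $N^{-2/3}\|P_{\leq N}X_{j}\|_{W^{1,3}}\to 0$ as $N\to\infty$. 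A standard two-step $\varepsilon$-argument --- pick $j$ large so $\|X-X_{j}\|_{\widehat{B}_{3,\infty}^{1/3}}<\varepsilon/2$, then $N_{\ast}$ large so the smooth piece contributes $<\varepsilon/2$ for $N>N_{\ast}$ --- yields $\limsup_{N\to\infty}N^{-2/3}\|P_{\leq N}X\|_{W^{1,3}(M)}=0$, and the bound on $M_{>r}$ is immediate from $\|\cdot\|_{W^{1,3}(M_{>r})}\leq\|\cdot\|_{W^{1,3}(M)}$.

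\textbf{Reverse direction when $\partial M=\emptyset$.} Here $\mathscr{D}_{N}\mathfrak{X}=\mathfrak{X}$ and $M_{>r}=M$, so it suffices to exhibit smooth approximants: for $X\in\widehat{B}_{3,V}^{1/3}\mathfrak{X}$ I would show $\|X-P_{\leq N_{0}}X\|_{\widehat{B}_{3,\infty}^{1/3}}\to 0$ as $N_{0}\to\infty$. The $L^{3}$ piece converges by the $L^{3}$-continuity of the heat flow. For the sup over $N>1$, I split: when $N\leq N_{0}$, Bernstein $\|P_{\leq N}Y\|_{W^{1,3}}\lesssim N\|Y\|_{L^{3}}$ applied to $Y=X-P_{\leq N_{0}}X=P_{>N_{0}}X$ yields
\[
N^{-2/3}\|P_{\leq N}(X-P_{\leq N_{0}}X)\|_{W^{1,3}}\lesssim N_{0}^{1/3}\|P_{>N_{0}}X\|_{L^{3}},
\]
which vanishes as $N_{0}\to\infty$ via the equivalent $L^{p}$-based characterization of $\widehat{B}_{3,V}^{1/3}$ furnished by \Corref{equiv_cN}; when $N>N_{0}$ the triangle inequality together with the $W^{1,3}$-boundedness of $e^{N^{-2}\Delta}$ on $W^{1,3}\mathfrak{X}_{N}$ (again \ref{fact:Fact_heat}) gives
\[
N^{-2/3}\|P_{\leq N}(X-P_{\leq N_{0}}X)\|_{W^{1,3}}\lesssim N^{-2/3}\|P_{\leq N}X\|_{W^{1,3}}+N_{0}^{-2/3}\|P_{\leq N_{0}}X\|_{W^{1,3}},
\]
whose two summands are small uniformly in $N>N_{0}$ as $N_{0}\to\infty$ by the hypothesis on $X$ (the first at all frequencies $>N_{0}$, the second directly at scale $N_{0}$).

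\textbf{Main obstacle.} The subtle point is matching the two regimes uniformly in $N_{0}$: the $N\leq N_{0}$ bound would pick up a fatal $N_{0}^{1/3}$ factor from Bernstein without the conversion to the $L^{p}$-side vanishing, while the $N>N_{0}$ bound requires $W^{1,3}$-boundedness of $e^{t\Delta}$ on $W^{1,3}\mathfrak{X}_{N}$ rather than on all of $W^{1,3}\mathfrak{X}$ --- this is why $P_{\leq N_{0}}X\in\mathfrak{X}_{N}$, automatic from the semigroup landing in $\mathfrak{X}_{N}$, is essential. Together, \Corref{equiv_cN} and the semigroup bounds in \ref{fact:Fact_heat} close the loop.
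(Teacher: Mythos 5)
Your proof is correct, and the first inclusion is essentially the paper's argument written out concretely: the paper compresses your two-step $\varepsilon$-argument into the single observation that $c_{0}(\mathbb{N})$ is closed in $l^{\infty}(\mathbb{N})$, using the bound $N^{-2/3}\|P_{\leq N}X\|_{W^{1,3}(M_{>r})}\leq N^{-2/3}\|P_{\leq N}X\|_{W^{1,3}(M)}\lesssim\|X\|_{\widehat{B}_{3,\infty}^{1/3}}$ as the Lipschitz comparison of tails.

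For the reverse direction (boundaryless case) you take a genuinely different route for the low-frequency regime. The paper also shows $P_{\leq K}X\to X$ in $\widehat{B}_{3,\infty}^{1/3}$, but handles all $N$ at once via the single uniform bound $N^{-2/3}\|P_{\leq N}(1-P_{\leq K})X\|_{W^{1,3}}\lesssim N^{-2/3}\|P_{\leq N}X\|_{W^{1,3}}$ (triangle inequality plus the $W^{1,3}$-contraction on $\mathfrak{X}_{N}$), together with the pointwise-in-$N$ limit $\|P_{\leq N}(1-P_{\leq K})X\|_{W^{1,3}}\xrightarrow{K\to\infty}0$ from Bernstein and $L^{3}$-continuity; a $\limsup$ in $K$ split at an auxiliary threshold $N_{0}$, then $N_{0}\to\infty$, closes the argument. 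You instead split at the approximating parameter $N_{0}$ itself and invoke \Corref{equiv_cN} to convert the hypothesis into the $L^{3}$-side decay $N_{0}^{1/3}\|P_{>N_{0}}X\|_{L^{3}}\to0$, which lets you absorb the $N^{1/3}$ factor from Bernstein at low frequencies. Both are valid and the dependency chain is non-circular (\Corref{equiv_cN} rests on the local Bernstein estimates and heat-kernel off-diagonal decay from the appendices, not on this lemma). The trade-off: the paper's route is more self-contained, needing only the semigroup facts in \Factref{Fact_heat}; yours imports the sharper appendix machinery, at the benefit of making the low/high-frequency bookkeeping entirely explicit. Your identification of the pitfall — that the $W^{1,3}$-bound on $e^{t\Delta}$ is only available on $W^{1,3}\mathfrak{X}_{N}$, and $P_{\leq N_{0}}X$ lands there automatically — is exactly the right technical point, and the paper relies on it implicitly in the same way.
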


\begin{proof}
Observe that $\mathscr{D}_{N}\mathfrak{X}\hookrightarrow\widehat{B}_{3,V}^{1/3}\mathfrak{X}$
. For any $r>0,N\geq1$ and $X\in\widehat{B}_{3,\infty}^{1/3}$, 
\[
N^{-2/3}\left\Vert P_{\leq N}X\right\Vert _{W^{1,3}\left(M_{>r}\right)}\leq N^{-2/3}\left\Vert P_{\leq N}X\right\Vert _{W^{1,3}\left(M\right)}\lesssim\left\Vert X\right\Vert _{\widehat{B}_{3,\infty}^{1/3}\left(M\right)}
\]
 Because $\left\{ f\in l^{\infty}\left(\mathbb{N}\right):f(k)\xrightarrow{k\to\infty}0\right\} $
is closed in $l^{\infty}\left(\mathbb{N}\right)$, we conclude $\widehat{B}_{3,c(\mathbb{N})}^{1/3}\mathfrak{X}\hookrightarrow\widehat{B}_{3,V}^{1/3}\mathfrak{X}$.

On the other hand, when $\partial M=\emptyset$, observe that $M_{>r}=M$.
Let $X\in\widehat{B}_{3,V}^{1/3}\mathfrak{X}$. We aim to show $P_{\leq K}X\xrightarrow[K\to\infty]{\widehat{B}_{3,\infty}^{1/3}}X$.
For any $N,K\in2^{\mathbb{N}_{0}}:$
\[
N^{-2/3}\left\Vert P_{\leq N}X\right\Vert _{W^{1,3}\left(M\right)}\gtrsim_{\neg K,\neg N}N^{-2/3}\left\Vert P_{\leq N}\left(1-P_{\leq K}\right)X\right\Vert _{W^{1,3}\left(M\right)}\xrightarrow{K\to\infty}0
\]
Let $N_{0}\in2^{\mathbb{N}_{1}}$. Then observe that 
\begin{align*}
\limsup_{K\to\infty}\left\Vert N^{-2/3}\left\Vert P_{\leq N}\left(1-P_{\leq K}\right)X\right\Vert _{W^{1,3}\left(M\right)}\right\Vert _{l_{N>1}^{\infty}} & \leq\underbrace{\limsup_{K\to\infty}\left\Vert N^{-2/3}\left\Vert P_{\leq N}\left(1-P_{\leq K}\right)X\right\Vert _{W^{1,3}\left(M\right)}\right\Vert _{l_{N}^{\infty}\left(N\in2^{\mathbb{N}_{0}},N<N_{0}\right)}}_{0}\\
 & \;\;\;\;\;+\left\Vert N^{-2/3}\left\Vert P_{\leq N}X\right\Vert _{W^{1,3}\left(M\right)}\right\Vert _{l_{N}^{\infty}\left(N\in2^{\mathbb{N}_{0}},N\geq N_{0}\right)}
\end{align*}
As $N_{0}$ is arbitrary, let $N_{0}\to\infty$ and we are done.
\end{proof}
\begin{rem}
On the other hand, \Thmref{contain_VMO} shows that, on flat backgrounds,
$\widehat{B}_{3,V}^{1/3}$ coincides with the VMO-type Besov space
$\underline{B}_{3,\text{VMO}}^{1/3}$ from \parencite{bardosOnsagerConjectureBounded2019,nguyenEnergyConservationInhomogeneous2020}.
\end{rem}

We will also need to borrow a result from \Secref{Local_analysis},
which allows us to employ cutoffs. 
\begin{fact}[Pointwise multiplier]
\label{fact:cutoff_interpol} If $f\in\mathscr{D}\left(M\right)$
and $\mathcal{X}\in L_{t}^{3}\widehat{B}_{3,V}^{\frac{1}{3}}\mathfrak{X},$
then $f\mathcal{X}\in L_{t}^{3}\widehat{B}_{3,V}^{\frac{1}{3}}\mathfrak{X}$. 
\end{fact}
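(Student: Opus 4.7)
The plan is a commutator decomposition that exploits the compact interior support of $f$ together with a local commutator estimate for the heat-based Littlewood-Paley projection. Since $f \in \mathscr{D}(M)$, there exists $r_0 > 0$ with $\mathrm{supp}\,f \subset M_{>r_0}$. The $L_t^3 L^3$ bound $\|f\mathcal{X}\|_{L_t^3 L^3} \leq \|f\|_{L^\infty}\|\mathcal{X}\|_{L_t^3 L^3}$ is trivial, so only the vanishing property (\ref{eq:vanishing_property}) needs work. Fix $r > 0$ and write
\[
P_{\leq N}(f\mathcal{X}) = f\cdot P_{\leq N}\mathcal{X} + [P_{\leq N}, f]\mathcal{X}.
\]

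For the first term, pointwise multiplication localizes everything to $\mathrm{supp}\,f$ and the usual Leibniz rule on $\nabla(fY)$ gives
\[
\|f\cdot P_{\leq N}\mathcal{X}\|_{L_t^3 W^{1,3}(M_{>r})} \lesssim \|f\|_{C^1}\,\|P_{\leq N}\mathcal{X}\|_{L_t^3 W^{1,3}(M_{>r_0})},
\]
so multiplying by $N^{-2/3}$ and letting $N\to\infty$ vanishes by applying the defining property of $\mathcal{X} \in L_t^3 \widehat{B}_{3,V}^{1/3}\mathfrak{X}$ at radius $r_0$. For the commutator, the key input is a \emph{uniform-in-$N$} local Bernstein-type commutator estimate — which is exactly what \Secref{Local_analysis} is devoted to proving — of the shape
\[
\|[P_{\leq N}, f]\mathcal{X}\|_{W^{1,3}(M)} \lesssim_{f} \|\mathcal{X}\|_{L^3(M)}.
\]
Accepting this, one obtains $N^{-2/3}\|[P_{\leq N}, f]\mathcal{X}\|_{L_t^3 W^{1,3}(M_{>r})} \lesssim_f N^{-2/3}\|\mathcal{X}\|_{L_t^3 L^3} \to 0$, completing the verification.

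The hard part is the commutator estimate. Heuristically, one differentiation of $f$ across the kernel of $P_{\leq N} = e^{N^{-2}\Delta}$ produces the factor $|f(x)-f(y)| \lesssim \|\nabla f\|_\infty d(x,y)$, and $d(x,y)$ is effectively of size $N^{-1}$ on the region where the heat kernel is not negligible, thereby compensating exactly for the $N$ that a derivative introduces. On flat space this is immediate from the Gaussian convolution kernel, but on a compact Riemannian manifold with boundary the Hodge-Neumann heat semigroup is defined abstractly by functional calculus and carries no explicit integral representation. Making the heuristic rigorous is the main reason the authors develop the manifold-with-corners calculus and construct $K_t(x,y)$ with its full off-diagonal asymptotics in \Secref{Construction-of-heat-kernel}; once pointwise bounds on $K_t$ and $\nabla_x K_t$ are available, the commutator estimate reduces to a standard pseudolocality argument. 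All remaining steps above are routine.
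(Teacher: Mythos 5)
Your commutator decomposition is structurally close to the paper's — the paper also isolates a commutator $\mathcal{W}(s) = f^s X^s - (fX)^s$ (differing from your $[P_{\leq N},f]$ only in that $f$ is also heated) and bounds it uniformly by $\lVert X\rVert_{L^p}$, which is enough because $s^{(1-1/p)/2} \to 0$ kills the remainder. But the proposal misidentifies how that commutator estimate is actually proved, and leaves the crucial step as an "accepting this." The paper does \emph{not} derive the commutator bound from off-diagonal decay of the heat kernel or any pseudolocality argument; instead it computes $\mathcal{N}(s)=(\partial_s-\Delta)\mathcal{W}(s)$, uses the Weitzenb\"ock/Leibniz identity to show $\mathcal{N}(s)=(D^1 f^s)*(D^1 X^s)$ for first-order $D^1$, applies the Duhamel formula $\mathcal{W}(s)=\int_0^s e^{(s-\sigma)\Delta}\mathcal{N}(\sigma)\,\mathrm{d}\sigma$, and closes via the smoothing bounds $\lVert e^{(s-\sigma)\Delta}\cdot\rVert_{W^{1,p}}\lesssim (s-\sigma)^{-1/2}\lVert\cdot\rVert_{L^p}$ and $\lVert X^\sigma\rVert_{W^{1,p}}\lesssim \sigma^{-1/2}\lVert X\rVert_{L^p}$ from Fact \ref{fact:Fact_heat}, giving a convergent Beta-function integral. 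This is purely semigroup-theoretic; the kernel construction of \Secref{Construction-of-heat-kernel} is never invoked in this proof (it is needed for the local Bernstein estimates and \Corref{equiv_cN}, not here). A weak convergence step, $\mathcal{W}(\varepsilon)^{s-\varepsilon}\rightharpoonup 0$ combined with \Lemref{func_anal_lemma}, is also required to legitimize passing the norm through the improper Duhamel integral — a nontrivial point your sketch does not address.

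Two smaller remarks. First, the paper's theorem is stated for the wider class $f\in\mathscr{D}_N(M)$ (which need not be compactly supported in $\accentset{\circ}{M}$), and accordingly its treatment of the principal term does not invoke compact support of $f$ at all: it bounds $\lVert f^t X^t\rVert_{W^{1,p}(M_{>r})}\lesssim\lVert f^t\rVert_{C^1}\lVert X^t\rVert_{W^{1,p}(M_{>r})}$ and applies the $\widehat{B}_{p,V}^{1/p}$-property directly at radius $r$; your reduction to $M_{>r_0}$ via $\mathrm{supp}\,f$ is also valid but less general. Second, even granting your Gaussian-kernel heuristic, making it rigorous in $W^{1,3}$ (not just $L^3$) would require careful bookkeeping of $\nabla_x$ falling on the kernel versus on $f$ — the Duhamel route sidesteps all of that. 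In sum: the strategy is right, but the commutator estimate — the part you call "the hard part" — is not merely an appeal to pseudolocality of an explicit kernel, and as written your proof has a gap there.
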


\subsection{Proof of Onsager's conjecture}
\begin{defn}
We define the cutoffs 
\begin{equation}
\psi_{r}(x)=\Psi_{r}\left(\mathrm{dist}\left(x,\partial M\right)\right)\label{eq:cutoff}
\end{equation}
where $r>0$ small, $\Psi_{r}\in C^{\infty}([0,\infty),[0,\infty))$
such that $\mathbf{1}_{[0,\frac{3}{4}r)}\geq\Psi_{r}\geq\mathbf{1}_{[0,\frac{r}{2}]}$
and $\left\Vert \Psi'_{r}\right\Vert _{\infty}\lesssim\frac{1}{r}.$\\
Then there is $f_{r}$ smooth such that $\nabla\psi_{r}(x)=f_{r}(x)\widetilde{\nu}(x)$
with $|f_{r}(x)|\lesssim\frac{1}{r}$ and $\supp f_{r}\subset M_{[\frac{r}{2},\frac{3r}{4}]}$.
\nomenclature{$\psi_{r},f_{r}$}{cutoffs on $M$ living near the boundary\nomrefpage}
\end{defn}

Let $\chi_{r}=1-\psi_{r}$. Then $\nabla\chi_{r}=-f_{r}\widetilde{\nu}$.
As usual, there is a \textbf{commutator estimate} which we will now
assume (leaving the proof to later):
\begin{align}
 & \int_{I}\eta\left\langle \left\langle \Div\left(\mathcal{U}\otimes\chi_{r}\mathcal{U}\right)^{2\varepsilon},\left(\chi_{r}\mathcal{U}\right)^{2\varepsilon}\right\rangle \right\rangle -\int_{I}\eta\left\langle \left\langle \Div\left(\mathcal{U}^{2\varepsilon}\otimes\left(\chi_{r}\mathcal{U}\right)^{2\varepsilon}\right),\left(\chi_{r}\mathcal{U}\right)^{2\varepsilon}\right\rangle \right\rangle \nonumber \\
= & \int_{I}\eta\left\langle \left\langle \Div\left(\mathcal{U}\otimes\chi_{r}\mathcal{U}\right)^{3\varepsilon},\left(\chi_{r}\mathcal{U}\right)^{\varepsilon}\right\rangle \right\rangle -\int_{I}\eta\left\langle \left\langle \Div\left(\mathcal{U}^{2\varepsilon}\otimes\left(\chi_{r}\mathcal{U}\right)^{2\varepsilon}\right)^{\varepsilon},\left(\chi_{r}\mathcal{U}\right)^{\varepsilon}\right\rangle \right\rangle \xrightarrow{\varepsilon\downarrow0}0\label{eq:commutator_est-1}
\end{align}
for fixed $r>0$, $\mathcal{U}\in L_{t}^{3}\widehat{B}_{3,V}^{\frac{1}{3}}\mathfrak{X}\cap L_{t}^{3}\mathbb{P}L^{3}\mathfrak{X},\eta\in C_{c}^{\infty}\left(I\right)$.
\begin{rem*}
For any $U$ in $\mathbb{P}L^{2}\mathfrak{X}$ and $V\in L^{2}\mathfrak{X}$,
$\Div\left(U\otimes V\right)^{\flat}$ is a heatable $1$-current
(see \Subsecref{heating_nonlinear}). In particular, for $\varepsilon>0$,
$\Div\left(U\otimes V\right)^{\varepsilon}$ is smooth and
\begin{equation}
\left\langle \left\langle \Div\left(U\otimes V\right)^{\varepsilon},Y\right\rangle \right\rangle =-\left\langle \left\langle U\otimes V,\nabla\left(Y^{\varepsilon}\right)\right\rangle \right\rangle \;\forall Y\in\mathfrak{X}\label{eq:distribution_parts_ok}
\end{equation}
Consequently, (\ref{eq:commutator_est-1}) is well-defined.

Notation: we write $\Div\left(\mathcal{U}\otimes\mathcal{V}\right)^{\varepsilon}$
for $\left(\Div\left(\mathcal{U}\otimes\mathcal{V}\right)\right)^{\varepsilon}$
and $\nabla\mathcal{U}^{\varepsilon}$ for $\nabla\left(\mathcal{U}^{\varepsilon}\right)$
(recall that the heat flow does not work on tensors $\mathcal{U}\otimes\mathcal{V}$
and $\nabla\mathcal{U}$). 
\end{rem*}
\begin{thm}[Onsager's conjecture]
 Let $M$ be a compact, oriented Riemannian manifold with no or smooth
boundary. Let $\left(\mathcal{V},\mathfrak{p}\right)$ be a weak solution
and $\mathcal{V}\in L_{t}^{3}\widehat{B}_{3,V}^{\frac{1}{3}}\mathfrak{X}\cap L_{t}^{3}\mathbb{P}L^{3}\mathfrak{X}$.

Assume (\ref{eq:commutator_est-1}) is true. Also assume strip decay:
\[
\left\Vert \left(\frac{\left|\mathcal{V}\right|^{2}}{2}+\mathfrak{p}\right)\left\langle \mathcal{V},\widetilde{\nu}\right\rangle \right\Vert _{L_{t}^{1}L^{1}\left(M_{[\frac{r}{2},r]},\mathrm{avg}\right)}\xrightarrow{r\downarrow0}0
\]

Then we can show
\[
\int_{I}\eta'(t)\left\langle \left\langle \mathcal{V}(t),\mathcal{V}(t)\right\rangle \right\rangle \mathrm{d}t=0\;\forall\eta\in C_{c}^{\infty}(I)
\]
Consequently, $\left\langle \left\langle \mathcal{V}(t),\mathcal{V}(t)\right\rangle \right\rangle $
is constant for a.e. $t\in I$.
\end{thm}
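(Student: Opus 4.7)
The plan is to approximate the $L^{2}$ energy by the localized-and-mollified quantity $E_{r,\varepsilon}(t) := \|(\chi_{r}\mathcal{V}(t))^{2\varepsilon}\|_{L^{2}}^{2}$, derive an approximate energy identity for $E_{r,\varepsilon}$, and then pass to the limits $\varepsilon\to 0$ (via the commutator estimate (\ref{eq:commutator_est-1})) and $r\to 0$ (via the strip decay hypothesis). The cutoff $\chi_{r}$ moves us into $\accentset{\circ}{M}$, where $\mathfrak{p}$ is locally integrable by interior elliptic regularity, while mollifying by $e^{2\varepsilon\Delta}$ turns every heatable current into a smooth vector field.

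First I would interpret the Euler equation $\partial_{t}\mathcal{V}+\Div(\mathcal{V}\otimes\mathcal{V})+\nabla\mathfrak{p}=0$ as an identity of heatable currents in time, and use self-adjointness of the heat flow to compute $\partial_{t}E_{r,\varepsilon}(t)=2\langle\langle\chi_{r}\partial_{t}\mathcal{V},(\chi_{r}\mathcal{V})^{4\varepsilon}\rangle\rangle$. Since $\Div\mathcal{V}=0$ and $\nabla\chi_{r}=-f_{r}\widetilde{\nu}$, the distributional identity $\chi_{r}\Div(\mathcal{V}\otimes\mathcal{V})=\Div(\mathcal{V}\otimes\chi_{r}\mathcal{V})+f_{r}\langle\widetilde{\nu},\mathcal{V}\rangle\mathcal{V}$ holds, which after multiplying by $\eta$ and integrating by parts in $t$ yields
\begin{equation*}
-\int_{I}\eta'(t)\,E_{r,\varepsilon}(t)\,dt \;=\; 2A^{\varepsilon}+2B^{\varepsilon}+2P^{\varepsilon},
\end{equation*}
where $A^{\varepsilon}:=\int_{I}\eta\,\langle\langle\Div(\mathcal{V}\otimes\chi_{r}\mathcal{V})^{2\varepsilon},(\chi_{r}\mathcal{V})^{2\varepsilon}\rangle\rangle$ is precisely the main term of (\ref{eq:commutator_est-1}), $B^{\varepsilon}:=\int_{I}\eta\,\langle\langle f_{r}\langle\widetilde{\nu},\mathcal{V}\rangle\mathcal{V},(\chi_{r}\mathcal{V})^{4\varepsilon}\rangle\rangle$ is the boundary-strip kinetic term, and $P^{\varepsilon}:=\int_{I}\eta\,\langle\langle\chi_{r}\nabla\mathfrak{p},(\chi_{r}\mathcal{V})^{4\varepsilon}\rangle\rangle$ is the pressure term.

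Next, I argue $A^{\varepsilon}\to 0$: the companion term in (\ref{eq:commutator_est-1}) equals $\tfrac{1}{2}\int_{I}\eta\int_{M}\mathcal{V}^{2\varepsilon}\cdot\nabla|(\chi_{r}\mathcal{V})^{2\varepsilon}|^{2}$, which vanishes after integration by parts because $\mathcal{V}^{2\varepsilon}$ is smooth, divergence-free (from $S(t)\mathbb{P}=\mathbb{P}S(t)$ and $\mathcal{V}\in\mathbb{P}L^{3}$), and in $\mathfrak{X}_{N}$. Continuity of $e^{t\Delta}$ on $L^{3}$ with Hölder gives $B^{\varepsilon}\to B_{r}:=\int_{I}\eta\int_{M}\chi_{r}f_{r}\langle\widetilde{\nu},\mathcal{V}\rangle|\mathcal{V}|^{2}$. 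For $P^{\varepsilon}$, since $\chi_{r}$ is compactly supported in $\accentset{\circ}{M}$ (where $\mathfrak{p}$ is locally $L^{3/2}$), an integration by parts is legal, and using $\Div((\chi_{r}\mathcal{V})^{4\varepsilon})=(-f_{r}\langle\widetilde{\nu},\mathcal{V}\rangle)^{4\varepsilon}$ together with heat-flow convergence gives $P^{\varepsilon}\to P_{r}:=2\int_{I}\eta\int_{M}\chi_{r}f_{r}\langle\widetilde{\nu},\mathcal{V}\rangle\mathfrak{p}$. Combined with DCT-convergence $E_{r,\varepsilon}\to\|\chi_{r}\mathcal{V}\|_{L^{2}}^{2}$ in $L_{t}^{3/2}$ (heat flow is a contraction on $L^{2}$, dominating function $\|\mathcal{V}\|_{L^{2}}^{2}\in L_{t}^{3/2}$), this produces
\begin{equation*}
-\int_{I}\eta'\,\|\chi_{r}\mathcal{V}\|_{L^{2}}^{2}\,dt \;=\; 2\int_{I}\eta\int_{M}\chi_{r}f_{r}\langle\widetilde{\nu},\mathcal{V}\rangle\,(|\mathcal{V}|^{2}+2\mathfrak{p}).
\end{equation*}

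Finally, sending $r\to 0$: the left side tends to $-\int_{I}\eta'\|\mathcal{V}\|_{L^{2}}^{2}$ by DCT. The right side is supported in $M_{[r/2,3r/4]}$ (volume $\sim r$) with $|\chi_{r}f_{r}|\lesssim 1/r$, so after normalizing the measure it is bounded by $\|(\tfrac{1}{2}|\mathcal{V}|^{2}+\mathfrak{p})\langle\mathcal{V},\widetilde{\nu}\rangle\|_{L_{t}^{1}L^{1}(M_{[r/2,r]},\avg)}$ (noting $M_{[r/2,3r/4]}\subset M_{[r/2,r]}$ with comparable volume), and hence vanishes by the strip decay assumption. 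This gives $\int_{I}\eta'(t)\langle\langle\mathcal{V}(t),\mathcal{V}(t)\rangle\rangle\,dt=0$ for every $\eta\in C_{c}^{\infty}(I)$, as desired. The main technical hurdle is the rigorous derivation of the time-derivative identity in Step 1: because $\mathcal{V}$ is merely $L_{t}^{3}L^{3}$, the spatial mollifier $(\chi_{r}\mathcal{V})^{4\varepsilon}$ is not smooth in $t$ and cannot be fed directly into the weak formulation, so one must interpret the PDE in the heatable current framework and exploit self-adjointness of $e^{t\Delta}$ together with $S(t)\mathbb{P}=\mathbb{P}S(t)$ to legitimize every pairing.
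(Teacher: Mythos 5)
Your proposal follows the paper's high-level strategy closely: localize with $\chi_{r}$, mollify spatially with $e^{\varepsilon\Delta}$, split the energy balance into a commutator main term (killed by (\ref{eq:commutator_est-1}) plus the divergence-free/Neumann cancellation for $\mathcal{V}^{\varepsilon}$), a kinetic boundary-strip term, and a pressure boundary-strip term, then send $r\downarrow 0$ via strip decay. The identities $\chi_{r}\Div(\mathcal{V}\otimes\mathcal{V})=\Div(\mathcal{V}\otimes\chi_{r}\mathcal{V})+f_{r}\langle\widetilde{\nu},\mathcal{V}\rangle\mathcal{V}$ and $\Div((\chi_{r}\mathcal{V})^{4\varepsilon})=-(f_{r}\langle\widetilde{\nu},\mathcal{V}\rangle)^{4\varepsilon}$ are the same algebraic reductions the paper uses (the latter being the intrinsic commutation \Eqref{intrin_exam}).

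The genuine gap is exactly the one you flag at the end, but the remedy you propose does not actually close it. You want to write $\partial_{t}E_{r,\varepsilon}(t)=2\langle\langle\chi_{r}\partial_{t}\mathcal{V},(\chi_{r}\mathcal{V})^{4\varepsilon}\rangle\rangle$ and then substitute from the weak formulation. But the weak formulation only pairs $\partial_{t}\mathcal{V}$ against test vector fields in $C_{c}^{\infty}\left(I,\mathfrak{X}_{00}\right)$, and $\eta\,\chi_{r}\,(\chi_{r}\mathcal{V})^{4\varepsilon}$ is merely $L_{t}^{3}$ in time, so it is not an admissible test function. Moreover, the product rule $\partial_{t}\|F\|_{L^{2}}^{2}=2\langle\langle\partial_{t}F,F\rangle\rangle$ is not legitimate for $F\in L_{t}^{3}L^{2}$ with $\partial_{t}F$ only a time-distribution. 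Self-adjointness of $e^{t\Delta}$ and the commutation $S(t)\mathbb{P}=\mathbb{P}S(t)$ are purely spatial facts and cannot upgrade time regularity; they are irrelevant to the obstacle you identified. What is needed --- and what the paper actually does --- is a separate temporal mollification with an even kernel $\Phi_{\tau}$, yielding the triple limit $\lim_{r\downarrow0}\lim_{\varepsilon\downarrow0}\lim_{\tau\downarrow0}$. After mollifying, $(\chi_{r}\mathcal{V})_{\tau}^{\varepsilon}$ is smooth in $t$, so the product rule and integration by parts in $t$ are valid, and $\left(\eta(\chi_{r}\mathcal{V})_{\tau}^{2\varepsilon}\right)_{\tau}\chi_{r}$ (after moving $\Phi_{\tau}$ to the other factor by its evenness) is an honest test function for the weak formulation. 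Once the $t$-derivative has been traded away, $\tau\downarrow0$ can be taken painlessly because $\mathcal{V}\in L_{t}^{3}$. Without this step your approximate energy balance is a formal identity, not a theorem. A small secondary note: the sign in your claimed identity $-\int_{I}\eta'\,E_{r,\varepsilon}=2A^{\varepsilon}+2B^{\varepsilon}+2P^{\varepsilon}$ appears to be off (the left side should carry the opposite sign, or equivalently the right side should), but that is cosmetic compared to the time-mollification issue.
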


\begin{proof}
Let $\Phi\in C_{c}^{\infty}(\mathbb{R})$ and $\Phi_{\tau}\xrightarrow{\tau\downarrow0}\delta_{0}$
be a radially symmetric mollifier. Write $\mathcal{V}^{\varepsilon}$
for $e^{\varepsilon\Delta}\mathcal{V}$ (spatial mollification) and
$\mathcal{V}_{\tau}$ for $\Phi_{\tau}*\mathcal{V}$ (temporal mollification).
First, we use the cutoff $\chi_{r}$ and mollify in time and space
\[
\frac{1}{2}\int_{I}\eta'\left\langle \left\langle \mathcal{V},\mathcal{V}\right\rangle \right\rangle \stackrel{\text{DCT}}{=}\lim_{r\downarrow0}\lim_{\varepsilon\downarrow0}\lim_{\tau\downarrow0}\frac{1}{2}\int_{I}\eta'\left\langle \left\langle \left(\chi_{r}\mathcal{V}\right)_{\tau}^{\varepsilon},\left(\chi_{r}\mathcal{V}\right)_{\tau}^{\varepsilon}\right\rangle \right\rangle 
\]
Then for $\varepsilon,\tau$ small, we want to get rid of the time
derivative:
\begin{align*}
\frac{1}{2}\int_{I}\eta'\left\langle \left\langle \left(\chi_{r}\mathcal{V}\right)_{\tau}^{\varepsilon},\left(\chi_{r}\mathcal{V}\right)_{\tau}^{\varepsilon}\right\rangle \right\rangle  & =-\int_{I}\eta\left\langle \left\langle \partial_{t}\left(\chi_{r}\mathcal{V}\right)_{\tau}^{\varepsilon},\left(\chi_{r}\mathcal{V}\right)_{\tau}^{\varepsilon}\right\rangle \right\rangle \\
 & =-\int_{I}\left\langle \left\langle \partial_{t}\left(\eta\left(\chi_{r}\mathcal{V}\right)_{\tau}^{\varepsilon}\right),\left(\chi_{r}\mathcal{V}\right)_{\tau}^{\varepsilon}\right\rangle \right\rangle +\int_{I}\eta'\left\langle \left\langle \left(\chi_{r}\mathcal{V}\right)_{\tau}^{\varepsilon},\left(\chi_{r}\mathcal{V}\right)_{\tau}^{\varepsilon}\right\rangle \right\rangle 
\end{align*}
We now use the definition of weak solution (WS), and exploit the commutativity
between spatial and temporal operators. For the sake of exposition,
we will freely cancel the error terms that go to zero upon taking
the limits. At the end of the proof, we will show why they can be
cancelled. 
\begin{align*}
 & \frac{1}{2}\int_{I}\eta'\left\langle \left\langle \left(\chi_{r}\mathcal{V}\right)_{\tau}^{\varepsilon},\left(\chi_{r}\mathcal{V}\right)_{\tau}^{\varepsilon}\right\rangle \right\rangle =\int_{I}\left\langle \left\langle \partial_{t}\left(\eta\left(\chi_{r}\mathcal{V}\right)_{\tau}^{\varepsilon}\right),\left(\chi_{r}\mathcal{V}\right)_{\tau}^{\varepsilon}\right\rangle \right\rangle =\int_{I}\left\langle \left\langle \partial_{t}\left[\left(\eta\left(\chi_{r}\mathcal{V}\right)_{\tau}^{2\varepsilon}\right)_{\tau}\chi_{r}\right],\mathcal{V}\right\rangle \right\rangle \\
\stackrel{\mathrm{WS}}{=\joinrel=} & -\int_{I}\left\langle \left\langle \nabla\left[\left(\eta\left(\chi_{r}\mathcal{V}\right)_{\tau}^{2\varepsilon}\right)_{\tau}\chi_{r}\right],\mathcal{V}\otimes\mathcal{V}\right\rangle \right\rangle -\left\langle \left\langle \Div\left[\left(\eta\left(\chi_{r}\mathcal{V}\right)_{\tau}^{2\varepsilon}\right)_{\tau}\chi_{r}\right],\mathfrak{p}\right\rangle \right\rangle \\
= & -\int_{I}\left\langle \left\langle \nabla\left[\left(\eta\left(\chi_{r}\mathcal{V}\right)_{\tau}^{2\varepsilon}\right)_{\tau}\right]\chi_{r},\mathcal{V}\otimes\mathcal{V}\right\rangle \right\rangle -\cancel{\left\langle \left\langle \left(\eta\left(\chi_{r}\mathcal{V}\right)_{\tau}^{2\varepsilon}\right)_{\tau}\otimes\nabla\chi_{r},\mathcal{V}\otimes\mathcal{V}\right\rangle \right\rangle }-\left\langle \left\langle \Div\left(\eta\left(\chi_{r}\mathcal{V}\right)_{\tau}^{2\varepsilon}\right)_{\tau}\chi_{r},\mathfrak{p}\right\rangle \right\rangle \\
 & \;\;\;\;\;-\cancel{\left\langle \left\langle \left(\eta\left(\chi_{r}\mathcal{V}\right)_{\tau}^{2\varepsilon}\right)_{\tau}\cdot\nabla\chi_{r},\mathfrak{p}\right\rangle \right\rangle }\\
= & -\int_{I}\left\langle \left\langle \left(\eta\nabla\left(\chi_{r}\mathcal{V}\right)_{\tau}^{2\varepsilon}\right)_{\tau}\chi_{r},\mathcal{V}\otimes\mathcal{V}\right\rangle \right\rangle -\cancel{\left\langle \left\langle \left(\eta\Div\left(\left(\chi_{r}\mathcal{V}\right)^{2\varepsilon}\right)_{\tau}\right)_{\tau}\chi_{r},\mathfrak{p}\right\rangle \right\rangle }\\
= & -\int_{I}\eta\left\langle \left\langle \nabla\left(\chi_{r}\mathcal{V}\right)_{\tau}^{2\varepsilon},\chi_{r}\left(\mathcal{V}\otimes\mathcal{V}\right)_{\tau}\right\rangle \right\rangle 
\end{align*}
 As there is no longer a time derivative on $\mathcal{V}$, we will
get rid of $\tau$ by letting $\tau\downarrow0$ (fine as $\mathcal{V}$
is $L^{3}$ in time). Also recall \Eqref{distribution_parts_ok}:
\begin{align*}
 & \lim_{r\downarrow0}\lim_{\varepsilon\downarrow0}\frac{1}{2}\int_{I}\eta'\left\langle \left\langle \left(\chi_{r}\mathcal{V}\right)^{\varepsilon},\left(\chi_{r}\mathcal{V}\right)^{\varepsilon}\right\rangle \right\rangle =-\lim_{r\downarrow0}\lim_{\varepsilon\downarrow0}\int_{I}\eta\left\langle \left\langle \nabla\left(\chi_{r}\mathcal{V}\right)^{2\varepsilon},\mathcal{V}\otimes\chi_{r}\mathcal{V}\right\rangle \right\rangle \\
= & \lim_{r\downarrow0}\lim_{\varepsilon\downarrow0}\int_{I}\eta\left\langle \left\langle \left(\chi_{r}\mathcal{V}\right)^{\varepsilon},\Div\left(\mathcal{V}\otimes\chi_{r}\mathcal{V}\right)^{\varepsilon}\right\rangle \right\rangle \\
= & \lim_{r\downarrow0}\lim_{\varepsilon\downarrow0}\int_{I}\eta\left\langle \left\langle \left(\chi_{r}\mathcal{V}\right)^{\varepsilon},\Div\left(\mathcal{V}^{\varepsilon}\otimes\left(\chi_{r}\mathcal{V}\right)^{\varepsilon}\right)\right\rangle \right\rangle  & \text{(commutator estimate)}\\
= & \lim_{r\downarrow0}\lim_{\varepsilon\downarrow0}\int_{I}\eta\left\langle \left\langle \left(\chi_{r}\mathcal{V}\right)^{\varepsilon},\nabla_{\mathcal{V}^{\varepsilon}}\left(\chi_{r}\mathcal{V}\right)^{\varepsilon}\right\rangle \right\rangle =\lim_{r\downarrow0}\lim_{\varepsilon\downarrow0}\int_{I}\eta\int_{M}\mathcal{V}^{\varepsilon}\left(\frac{\left|\left(\chi_{r}\mathcal{V}\right)^{\varepsilon}\right|^{2}}{2}\right)=0 & \text{as }\mathcal{V}^{\varepsilon}\in\mathbb{P}\mathfrak{X}
\end{align*}

We are done. As promised, we now show why we could cancel the error
terms previously. Let us calculate
\begin{equation}
-\lim_{r\downarrow0}\lim_{\varepsilon\downarrow0}\lim_{\tau\downarrow0}\int_{I}\left\langle \left\langle \left(\eta\left(\chi_{r}\mathcal{V}\right)_{\tau}^{2\varepsilon}\right)_{\tau}\otimes\nabla\chi_{r},\mathcal{V}\otimes\mathcal{V}\right\rangle \right\rangle +\left\langle \left\langle \left(\eta\left(\chi_{r}\mathcal{V}\right)_{\tau}^{2\varepsilon}\right)_{\tau}\cdot\nabla\chi_{r}+\left(\eta\Div\left(\left(\chi_{r}\mathcal{V}\right)^{2\varepsilon}\right)_{\tau}\right)_{\tau}\chi_{r},\mathfrak{p}\right\rangle \right\rangle \label{eq:big_expr_ons}
\end{equation}
Recall from \parencite{huynhHodgetheoreticAnalysisManifolds2019}
that $\delta_{c}=\delta\restriction_{\Omega_{N}}$ and $\delta_{c}^{\mathscr{D}_{N}^{'}}$
is the extension of $\delta_{c}$ to heatable currents, defined by
\[
\delta_{c}^{\mathscr{D}_{N}^{'}}\Lambda\left(\phi\right)=\Lambda\left(d\phi\right)\;\forall\Lambda\in\mathscr{D}_{N}^{'}\Omega,\forall\phi\in\mathscr{D}_{N}\Omega
\]
Then the fact that $\mathbb{P}\mathcal{V}^{\flat}=\mathcal{V}^{\flat}$
is equivalent to $\delta_{c}^{\mathscr{D}_{N}^{'}}\mathcal{V}^{\flat}=0$.
This implies:
\begin{equation}
-\Div\left(\left(\chi_{r}\mathcal{V}\right)^{2\varepsilon}\right)=\delta_{c}\left(\left(\chi_{r}\mathcal{V}^{\flat}\right)^{2\varepsilon}\right)=\left(\delta_{c}^{\mathscr{D}_{N}^{'}}\left(\chi_{r}\mathcal{V}^{\flat}\right)\right)^{2\varepsilon}=\left(-\nabla\chi_{r}\cdot\mathcal{V}+\chi_{r}\delta_{c}^{\mathscr{D}_{N}^{'}}\mathcal{V}^{\flat}\right)^{2\varepsilon}=\left(f_{r}\widetilde{\nu}\cdot\mathcal{V}\right)^{2\varepsilon}\label{eq:intrin_exam}
\end{equation}
With that simplification, and the lack of any time derivatives, (\ref{eq:big_expr_ons})
becomes
\begin{align*}
 & \lim_{r\downarrow0}\lim_{\varepsilon\downarrow0}\int_{I}\eta\left\langle \left\langle \left(\chi_{r}\mathcal{V}\right)^{2\varepsilon}\otimes f_{r}\widetilde{\nu},\mathcal{V}\otimes\mathcal{V}\right\rangle \right\rangle +\eta\left\langle \left\langle \left(\chi_{r}\mathcal{V}\right)^{2\varepsilon}\cdot f_{r}\widetilde{\nu},\mathfrak{p}\right\rangle \right\rangle +\eta\left\langle \left\langle \left(f_{r}\widetilde{\nu}\cdot\mathcal{V}\right)^{2\varepsilon},\chi_{r}\mathfrak{p}\right\rangle \right\rangle \\
= & \lim_{r\downarrow0}\int_{I}\eta\left\langle \left\langle \mathcal{V}\cdot\mathcal{V},\chi_{r}f_{r}\widetilde{\nu}\cdot\mathcal{V}\right\rangle \right\rangle +2\eta\left\langle \left\langle \mathcal{V}\cdot\chi_{r}f_{r}\widetilde{\nu},\mathfrak{p}\right\rangle \right\rangle =\lim_{r\downarrow0}\int_{I}2\eta\left\langle \left\langle \frac{\left|\mathcal{V}\right|^{2}}{2}+\mathfrak{p},\chi_{r}f_{r}\widetilde{\nu}\cdot\mathcal{V}\right\rangle \right\rangle \\
= & \lim_{r\downarrow0}O\left(\int_{I}\left|\eta\right|\int_{M_{[\frac{r}{2},r]}}\left|\frac{\left|\mathcal{V}\right|^{2}}{2}+\mathfrak{p}\right|\frac{1}{r}\left|\left\langle \widetilde{\nu},\mathcal{V}\right\rangle \right|\right)=0 & \text{(strip decay)}
\end{align*}
\end{proof}
\begin{rem}
The proof did not much use the Besov regularity of $\mathcal{V}$,
which is mainly used for the commutator estimate.
\end{rem}

It is the commutator estimate that presents the main difficulty. We
proceed similarly as in \parencite{isettHeatFlowApproach2014}.

\uline{Note that from this point on \mbox{$r>0$} is fixed.}

Let $\mathcal{U}\in L_{t}^{3}\widehat{B}_{3,V}^{\frac{1}{3}}\mathfrak{X}\cap L_{t}^{3}\mathbb{P}L^{3}\mathfrak{X}$
and $\chi_{r}$ be as before.

By setting $\mathcal{U}(t)$ to $0$ for $t$ in a null set, WLOG
we assume $\mathcal{U}(t)\in\mathbb{P}L^{3}\mathfrak{X}\cap\widehat{B}_{3,V}^{1/3}\mathfrak{X}\;\forall t\in I$.

Define the commutator 
\[
\mathcal{W}(t,s)=\Div\left(\mathcal{U}\left(t\right)\otimes\chi_{r}\mathcal{U}\left(t\right)\right)^{3s}-\Div\left(\mathcal{U}^{2s}\otimes\left(\chi_{r}\mathcal{U}\left(t\right)\right)^{2s}\right)^{s}
\]
When $t$ and $s$ are implicitly understood, we will not write them.
As $\Div\left(\mathcal{U}(t)\otimes\mathcal{U}(t)\right)^{3s}$ solves
$\left(\partial_{s}-3\Delta\right)\mathcal{X}=0$, we define $\mathcal{N}=\left(\partial_{s}-3\Delta\right)\mathcal{W}$.
Then $\mathcal{W}$ and $\mathcal{N}$ obey the Duhamel formula.
\begin{lem}[Duhamel]
For fixed $t_{0}\in I$ and $s>0$: $\int_{\varepsilon}^{s}\mathcal{N}\left(t_{0},\sigma\right)^{3(s-\sigma)}\mathrm{d}\sigma\xrightarrow{\varepsilon\downarrow0}\mathcal{W}\left(t_{0},s\right)$
in $\mathscr{D}'_{N}\mathfrak{X}$.
\end{lem}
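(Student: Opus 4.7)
The plan is a standard FTC/Duhamel argument. Fix a test $\phi \in \mathscr{D}_N\mathfrak{X}$ and set
\[
f(\sigma) := \bigl\langle\bigl\langle \mathcal{W}(t_0,\sigma), \phi^{3(s-\sigma)}\bigr\rangle\bigr\rangle, \qquad \sigma \in (0,s].
\]
For every $\sigma > 0$ both summands of $\mathcal{W}(t_0,\sigma)$ are obtained by applying $e^{c\sigma\Delta}$ (with $c \in \{1,3\}$) to a heatable current, hence lie in $\mathscr{D}_N\mathfrak{X}$ and depend smoothly on $\sigma$; likewise $\mathcal{N}(t_0,\sigma) = (\partial_\sigma - 3\Delta)\mathcal{W}(t_0,\sigma)$ and $\sigma \mapsto \phi^{3(s-\sigma)}$. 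Using $\partial_\sigma \phi^{3(s-\sigma)} = -3\Delta\phi^{3(s-\sigma)}$, self-adjointness of $\Delta$ on $\mathscr{D}_N\mathfrak{X}$ (\Factref{D_N-basic-properties}), and the product rule,
\[
f'(\sigma) \;=\; \bigl\langle\bigl\langle (\partial_\sigma - 3\Delta)\mathcal{W}(t_0,\sigma), \phi^{3(s-\sigma)}\bigr\rangle\bigr\rangle \;=\; \bigl\langle\bigl\langle \mathcal{N}(t_0,\sigma)^{3(s-\sigma)}, \phi\bigr\rangle\bigr\rangle.
\]
FTC on $[\varepsilon, s]$ then yields $\int_\varepsilon^s \langle\langle \mathcal{N}(t_0,\sigma)^{3(s-\sigma)}, \phi\rangle\rangle\, d\sigma = \langle\langle \mathcal{W}(t_0,s), \phi\rangle\rangle - f(\varepsilon)$, so the lemma reduces to showing $f(\varepsilon) \to 0$ as $\varepsilon \downarrow 0$.

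For that last piece, I would collapse the semigroups via $e^{3(s-\varepsilon)\Delta}\circ e^{3\varepsilon\Delta} = e^{3s\Delta}$ and $e^{3(s-\varepsilon)\Delta}\circ e^{\varepsilon\Delta} = e^{(3s-2\varepsilon)\Delta}$ to rewrite
\[
f(\varepsilon) \;=\; \bigl\langle\bigl\langle \Div(\mathcal{U}\otimes\chi_r\mathcal{U}), \phi^{3s}\bigr\rangle\bigr\rangle - \bigl\langle\bigl\langle \Div(\mathcal{U}^{2\varepsilon}\otimes(\chi_r\mathcal{U})^{2\varepsilon}), \phi^{3s-2\varepsilon}\bigr\rangle\bigr\rangle.
\]
The first pairing is evaluated by the weak definition \eqref{eq:int_parts}, valid because $\mathbb{P}\mathcal{U} = \mathcal{U}$. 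The second is classical integration by parts, whose boundary term $\int_{\partial M}\langle\nu,\mathcal{U}^{2\varepsilon}\rangle\langle(\chi_r\mathcal{U})^{2\varepsilon},\phi^{3s-2\varepsilon}\rangle$ vanishes since $\mathcal{U}^{2\varepsilon} \in \mathfrak{X}_N$ by \Factref{Fact_heat}. Both collapse to $-\langle\langle \cdot \otimes \cdot, \nabla \phi^{\cdot}\rangle\rangle$, and telescoping the difference gives
\[
f(\varepsilon) \;=\; \bigl\langle\bigl\langle \mathcal{U}^{2\varepsilon}\otimes(\chi_r\mathcal{U})^{2\varepsilon} - \mathcal{U}\otimes\chi_r\mathcal{U},\; \nabla \phi^{3s}\bigr\rangle\bigr\rangle + \bigl\langle\bigl\langle \mathcal{U}^{2\varepsilon}\otimes(\chi_r\mathcal{U})^{2\varepsilon},\; \nabla \phi^{3s-2\varepsilon} - \nabla \phi^{3s}\bigr\rangle\bigr\rangle,
\]
which vanishes by $L^3$-continuity of the heat semigroup at $\varepsilon = 0$ (\Factref{Fact_heat}) applied to $\mathcal{U}$ and $\chi_r\mathcal{U}$, together with $C^\infty$-continuity of $\sigma \mapsto \nabla\phi^\sigma$ near $\sigma = 3s$ (\Factref{D_N-basic-properties}).

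The main obstacle is conceptual rather than computational: at $\sigma = 0$ the first summand $\Div(\mathcal{U}\otimes\chi_r\mathcal{U})$ is only a heatable current and not a classical vector field, so the two brackets in $f(\varepsilon)$ do not match on their face. The weak identity \eqref{eq:int_parts}, justified by the incompressibility $\mathbb{P}\mathcal{U} = \mathcal{U}$, is precisely what puts them on equal footing and allows the vanishing to be read off from continuity of the heat flow. Once that is in hand, the remainder is the standard Duhamel argument and nothing else delicate occurs.
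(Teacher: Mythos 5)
Your proof is correct and takes essentially the same approach as the paper's: establish a Duhamel identity for $\mathcal{W}(t_0,\cdot)$, then reduce to showing the remainder at $\sigma=\varepsilon$ vanishes weakly, which you handle by the same integration-by-parts telescope exploiting $\mathbb{P}\mathcal{U}=\mathcal{U}$ and $L^3$-continuity of the heat flow. The only cosmetic difference is that you derive the Duhamel identity directly at the level of the scalar pairing $f(\sigma)=\langle\langle\mathcal{W}(t_0,\sigma),\phi^{3(s-\sigma)}\rangle\rangle$ via FTC and self-adjointness of $\Delta$ on $\mathscr{D}_N\mathfrak{X}$, whereas the paper invokes the abstract $C_0$-semigroup Duhamel formula on $H^m$-closures and only then pairs against a test form.
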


\begin{proof}
Let $\varepsilon>0$. By the smoothing effect of $e^{s\Delta}$, $\mathcal{W}(t_{0},\cdot)$
and $\mathcal{N}(t_{0},\cdot)$ are in $C_{\mathrm{loc}}^{0}\left((0,1],\mathscr{D}_{N}\mathfrak{X}\right)$.
As $\left(e^{s\Delta}\right)_{s\geq0}$ is a $C_{0}$ semigroup on
$\left(H^{m}\text{-}\mathrm{cl}\left(\mathscr{D}_{N}\mathfrak{X}\right),\left\Vert \cdot\right\Vert _{H^{m}}\right)$
$\forall m\in\mathbb{N}_{0}$, and a semigroup basically corresponds
to an ODE (cf. \parencite[Appendix A, Proposition 9.10 \& 9.11]{taylorPartialDifferentialEquations2011a}),
from $\partial_{s}\mathcal{W}=3\Delta\mathcal{W}+\mathcal{N}$ for
$s\geq\varepsilon$ we get the Duhamel formula
\[
\forall s>\varepsilon:\mathcal{W}(t_{0},s)=\mathcal{W}\left(t_{0},\varepsilon\right)^{3(s-\varepsilon)}+\int_{\varepsilon}^{s}\mathcal{N}\left(t_{0},\sigma\right)^{3\left(s-\sigma\right)}\mathrm{d}\sigma
\]
So we only need to show $\mathcal{W}\left(t_{0},\varepsilon\right)^{3(s-\varepsilon)}\xrightarrow[\varepsilon\downarrow0]{\mathscr{D}'_{N}\mathfrak{X}}$0.
Let $X\in\mathscr{D}_{N}\mathfrak{X}.$ 
\begin{align*}
 & \left\langle \left\langle X,\mathcal{W}\left(t_{0},\varepsilon\right)^{3(s-\varepsilon)}\right\rangle \right\rangle =\left\langle \left\langle X^{3(s-\varepsilon)},\Div\left(\mathcal{U}\left(t_{0}\right)\otimes\chi_{r}\mathcal{U}\left(t_{0}\right)\right)^{3\varepsilon}-\Div\left(\mathcal{U}\left(t_{0}\right)^{2\varepsilon}\otimes\left(\chi_{r}\mathcal{U}\left(t_{0}\right)\right)^{2\varepsilon}\right)^{\varepsilon}\right\rangle \right\rangle \\
= & -\left\langle \left\langle \nabla\left(X^{3s}\right),\mathcal{U}\left(t_{0}\right)\otimes\chi_{r}\mathcal{U}\left(t_{0}\right)\right\rangle \right\rangle +\left\langle \left\langle \nabla\left(X^{3s-2\varepsilon}\right),\mathcal{U}\left(t_{0}\right)^{2\varepsilon}\otimes\left(\chi_{r}\mathcal{U}\left(t_{0}\right)\right)^{2\varepsilon}\right\rangle \right\rangle \xrightarrow{\varepsilon\downarrow0}0.
\end{align*}
\end{proof}
From now on, we write $\int_{0+}^{s}$ for $\lim_{\varepsilon\downarrow0}\int_{\varepsilon}^{s}$.
Then 
\[
\int_{I}\mathrm{d}t\;\eta\left(t\right)\left\langle \left\langle \mathcal{W}\left(t,s\right),\mathcal{U}\left(t\right)^{s}\right\rangle \right\rangle =\int_{I}\mathrm{d}t\;\eta\left(t\right)\int_{0+}^{s}\mathrm{d}\sigma\left\langle \left\langle \mathcal{N}\left(t,\sigma\right)^{3(s-\sigma)},\mathcal{U}\left(t\right)^{s}\right\rangle \right\rangle 
\]

\begin{defn}
Define the $k$-jet fiber norm $\left|X\right|_{J^{k}}=\left(\sum\limits _{j=0}^{k}\left|\nabla^{\left(j\right)}X\right|^{2}\right)^{\frac{1}{2}}\;\forall X\in\mathfrak{X}$.
\end{defn}

Let $K\left(\sigma,x,y\right)$ be the kernel of the heat flow at
time $\sigma>0$. Then by \Secref{Construction-of-heat-kernel}, we
obtain off-diagonal decay for all derivatives: 
\begin{fact}[Off-diagonal decay]
\label{fact:off-diagonal-decay} For any multi-index $\gamma$ and
$x\neq y$, $D_{\sigma,x,y}^{\gamma}K\left(\sigma,x,y\right)=O\left(\sigma^{\infty}\right)$
as $\sigma\downarrow0$, locally uniform in $\{x\neq y\}$.
\end{fact}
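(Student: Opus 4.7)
The plan is to read the decay off directly from the structural description of $K$ that the construction in \Secref{Construction-of-heat-kernel} will produce. There, $K$ is realized as a polyhomogeneous conormal distribution on the parabolic heat blow-up space $M_h^2 = \bigl[[0,\infty) \times M \times M;\; \{0\} \times \mathrm{diag}(M)\bigr]$, with prescribed model behavior at each boundary face: a Gaussian-type profile at the front face $\mathrm{ff}$ (created by the blow-up), infinite-order vanishing at the temporal face $\mathrm{tf}$ (the lift of $\{\sigma=0\}$) away from $\mathrm{ff}$, and the appropriate absolute Neumann behavior at the spatial boundary faces arising from $\partial M$. This structural output is exactly what is needed.

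Given any compact $L \subset \{(x,y) \in M \times M : x \neq y\}$, I would first choose $\sigma_0 > 0$ small enough that $[0,\sigma_0] \times L$ is uniformly bounded away from $\{0\} \times \mathrm{diag}(M)$. The blow-down map $\beta : M_h^2 \to [0,\infty) \times M \times M$ is then a diffeomorphism over this set, so $\beta^{-1}([0,\sigma_0] \times L)$ is a compact subset of $M_h^2$ disjoint from $\mathrm{ff}$. On a neighborhood of this set one may take the defining function $\rho_{\mathrm{tf}}$ for the temporal face to be $\sigma$ itself. The infinite-order vanishing $K \in \rho_{\mathrm{tf}}^{\infty} C^{\infty}$ near $\mathrm{tf} \setminus \mathrm{ff}$ then gives, by Taylor's theorem, $|K(\sigma,x,y)| \leq C_{L,N}\, \sigma^N$ for every $N \in \mathbb{N}_0$, uniformly in $(x,y) \in L$.

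For the derivative statement, I would observe that on the compact region above one is uniformly bounded away from $\mathrm{ff}$, so $\partial_\sigma, \partial_{x^i}, \partial_{y^j}$ act as ordinary smooth vector fields there, not as b-vector fields degenerating at some blown-up face. Applying any $D^\gamma$ therefore preserves membership in $\rho_{\mathrm{tf}}^{N} C^{\infty}$ for every $N$ (each $\partial_\sigma$ costs at most one power of $\sigma$, and $|\gamma|$ is finite while $N$ is arbitrary), yielding the asserted bound $|D^\gamma K(\sigma,x,y)| \leq C_{L,\gamma,N}\, \sigma^N$ locally uniformly on $\{x \neq y\}$.

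The real content here is not this extraction but the construction itself, and that is where the main obstacle sits. The hard part will be to verify that both the initial parametrix and the Volterra-type correction term belong to the polyhomogeneous class with infinite-order vanishing at $\mathrm{tf}$ away from $\mathrm{ff}$, and that imposing the absolute Neumann condition on differential forms does not introduce spurious asymptotic terms at the faces arising from $\partial M$. Once the heat calculus is arranged so that composition and Volterra inversion preserve the relevant index sets, off-diagonal decay falls out as an immediate corollary; but assembling that framework for the Hodge Laplacian with Neumann conditions --- rather than scalars on a closed manifold --- is the substantive microlocal work carried out in \Secref{Construction-of-heat-kernel}.
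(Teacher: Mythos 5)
Your proposal is correct and takes essentially the same approach as the paper, which also reads off the off-diagonal decay (and of all derivatives) directly from the empty index set at the temporal face $\mathrm{tf}$ in the polyhomogeneous structure of $K$ on $M_h^2$; that is exactly the content of the remark following the ``Relevant properties'' list in \Secref{Construction-of-heat-kernel}. One small imprecision: the paper's $M_h^2$ is built by \emph{two} parabolic blow-ups --- first the front face $\mathrm{ff}$ over $\{0\}\times\partial M\times\partial M$, then the time-diagonal face $\mathrm{td}$ over the lifted diagonal --- whereas you describe a single blow-up of $\{0\}\times\mathrm{diag}(M)$; this is harmless for the present claim because a compact $L\subset\{x\neq y\}$ lifts to a compact subset of $M_h^2$ disjoint from \emph{both} $\mathrm{ff}$ and $\mathrm{td}$, so the only face controlling the small-$\sigma$ behavior over $L$ is $\mathrm{tf}$ with its empty index set, but it would matter if you tried to control $K$ near $\{x=y\}\cap(\partial M\times\partial M)$.
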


For convenience, we will write $\boxed{\mathcal{Y}=\chi_{r}\mathcal{U}}$.
Then for $r>0,\sigma\in\left(0,1\right)$ and $x\in M_{<r/4}:$ 
\begin{align}
\left|\mathcal{Y}^{\sigma}(t,x)\right|_{J^{2}} & \lesssim_{M,r}O_{r}\left(\sigma^{\infty}\right)\left\Vert \mathcal{U}(t)\right\Vert _{L^{3}\left(M_{>r/2}\right)}\label{eq:est_bd}
\end{align}
which implies $\left\Vert \mathcal{Y}^{\sigma}(t)\right\Vert _{W^{2,3}\left(M_{<r/4}\right)}+\left\Vert \mathcal{Y}^{\sigma}(t)\right\Vert _{W^{2,3}\mathfrak{X}M|_{\partial M}}\lesssim_{M,r}O_{r}\left(\sigma^{\infty}\right)\left\Vert \mathcal{U}(t)\right\Vert _{L^{3}\left(M_{>r/2}\right)}$.

We now handle the most important error estimates that will appear
in our analysis.
\begin{lem}[2 error estimates]
\label{lem:err_estimates} For fixed $r>0$ small, we have

\begin{equation}
\lim_{s\downarrow0}\int_{I}\left|\eta\right|\int_{0+}^{s}\mathrm{d}\sigma\int_{M}\left|\mathcal{U}^{2\sigma}\right|_{J^{1}}\left|\mathcal{Y}^{2\sigma}\right|_{J^{1}}\left|\mathcal{Y}^{4s-2\sigma}\right|_{J^{1}}=0\label{eq:1st_error_est}
\end{equation}
and 

\begin{equation}
\lim_{s\downarrow0}\int_{I}\left|\eta\right|\int_{0+}^{s}\mathrm{d}\sigma\int_{\partial M}\left|\mathcal{U}^{2\sigma}\right|_{J^{1}}\left|\mathcal{Y}^{2\sigma}\right|_{J^{1}}\left|\mathcal{Y}^{4s-2\sigma}\right|_{J^{2}}=0\label{eq:2nd_error_est}
\end{equation}
.
\end{lem}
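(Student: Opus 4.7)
The plan is to decompose $M = A \cup B$ with $A = M_{>r/4}$ and $B = M_{\le r/4}$, and exploit two complementary mechanisms: on $A$, the Besov-V vanishing of $\mathcal{Y}=\chi_r\mathcal{U}$ (which lies in $L_t^3\widehat{B}_{3,V}^{1/3}\mathfrak{X}$ by Fact~\ref{fact:cutoff_interpol}); on $B$, the pointwise off-diagonal decay~(\ref{eq:est_bd}) on the jets of $\mathcal{Y}^\sigma$. Since $\partial M \subset M_{<r/4}$, the boundary estimate~(\ref{eq:2nd_error_est}) sits entirely in the off-diagonal regime and uses only the latter mechanism.

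For the $A$-portion of (\ref{eq:1st_error_est}), triple H\"older ($L_t^3 L_x^3$ in each slot) together with the elementary bound $\||X|_{J^1}\|_{L^3} \le \|X\|_{W^{1,3}}$ yields
\[
\|\mathcal{U}^{2\sigma}\|_{L_t^3 W^{1,3}(M)}\;\|\mathcal{Y}^{2\sigma}\|_{L_t^3 W^{1,3}(A)}\;\|\mathcal{Y}^{4s-2\sigma}\|_{L_t^3 W^{1,3}(M)}.
\]
The heat-flow bounds (Fact~\ref{fact:D_N-basic-properties}) give $\|\mathcal{X}^\tau\|_{L_t^3 W^{1,3}} \lesssim \tau^{-1/3}\|\mathcal{X}\|_{L_t^3 \widehat{B}_{3,\infty}^{1/3}}$ for $\mathcal{X}\in\{\mathcal{U},\mathcal{Y}\}$. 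Setting $\phi(\tau) := \tau^{1/3}\|\mathcal{Y}^\tau\|_{L_t^3 W^{1,3}(A)}$, the Besov-V hypothesis applied with $r' = r/4$ makes $\phi$ bounded and $\phi(\tau)\to 0$ as $\tau \downarrow 0$. The change of variable $\sigma = su$ reduces the $\sigma$-integral to a constant multiple of $\int_0^1 u^{-2/3}(1-u/2)^{-1/3}\phi(2su)\,du$, which vanishes as $s \downarrow 0$ by dominated convergence against the integrable majorant $\|\phi\|_\infty u^{-2/3}(1-u/2)^{-1/3}$.

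On $B$, (\ref{eq:est_bd}) gives $\||\mathcal{Y}^{2\sigma}(t)|_{J^1}\|_{L^\infty(B)} \lesssim O(\sigma^\infty)\|\mathcal{U}(t)\|_{L^3(M_{>r/2})}$; pulling this factor out and H\"older-ing the remaining two terms against $|B|^{1/3}$ bounds the $B$-contribution to~(\ref{eq:1st_error_est}) by a multiple of $O(\sigma^\infty)\sigma^{-1/3}(s-\sigma/2)^{-1/3}$, whose $\sigma$-integral is $O(s^N)$ for every $N$. For~(\ref{eq:2nd_error_est}), since $\partial M \subset M_{<r/4}$, the same estimate supplies pointwise bounds $|\mathcal{Y}^{2\sigma}|_{J^1} \lesssim O(\sigma^\infty)\|\mathcal{U}(t)\|_{L^3(M_{>r/2})}$ and $|\mathcal{Y}^{4s-2\sigma}|_{J^2} \lesssim O((s-\sigma/2)^\infty)\|\mathcal{U}(t)\|_{L^3(M_{>r/2})}$ on $\partial M$. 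Combined with the trace bound $\||\mathcal{U}^{2\sigma}|_{J^1}\|_{L^3(\partial M)} \lesssim \|\mathcal{U}^{2\sigma}\|_{B_{3,1}^{1+1/3}(M)} \lesssim \sigma^{-2/3}\|\mathcal{U}\|_{L^3}$ (from Fact~\ref{fact:fact_trace} and Fact~\ref{fact:D_N-basic-properties}), the modest $\sigma^{-2/3}$ loss is dwarfed by the off-diagonal gain, again giving $O(s^N)$.

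The main obstacle is the DCT step on $A$: without the vanishing modulus $\phi(\tau)$, the product $\sigma^{-2/3}(s-\sigma/2)^{-1/3}$ is merely $O(1)$ on $(0,s)$ and the $\sigma$-integral does not vanish in $s$. It is precisely the Besov-V hypothesis, manifested as a bounded modulus $\phi$ tending to $0$, that forces the limit to $0$ after the rescaling $\sigma = su$; everything else is scaling-invariant bookkeeping and a small amount of off-diagonal swallowing.
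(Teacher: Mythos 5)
Your decomposition ($A = M_{>r/4}$ versus $B = M_{\le r/4}$), your treatment of the $B$-portion of (\ref{eq:1st_error_est}), and your treatment of (\ref{eq:2nd_error_est}) all match the paper's argument essentially step for step; there the $O_r(s^\infty)$ from (\ref{eq:est_bd}) (and, for the boundary term, the trace bound $\|\mathcal{U}^{2\sigma}\|_{W^{1,3}(\partial M)}\lesssim\|\mathcal{U}^{2\sigma}\|_{B_{3,1}^{1+1/3}(M)}\lesssim\sigma^{-2/3}\|\mathcal{U}\|_{L^3}$) swallows every power singularity, so the minor exponent slips you have there ($\sigma^{-1/3}$ where $\sigma^{-1/2}$ is the honest $L^3\!\to W^{1,3}$ rate) are harmless.

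The $A$-portion, however, has a genuine gap. You invoke $\|\mathcal{X}^\tau\|_{L_t^3 W^{1,3}(M)}\lesssim\tau^{-1/3}\|\mathcal{X}\|_{L_t^3\widehat{B}_{3,\infty}^{1/3}}$ for $\mathcal{X}\in\{\mathcal{U},\mathcal{Y}\}$, but the hypothesis on $\mathcal{U}$ is only $\mathcal{U}\in L_t^3\widehat{B}_{3,V}^{1/3}\mathfrak{X}\cap L_t^3\mathbb{P}L^3\mathfrak{X}$. Membership in $\widehat{B}_{3,V}^{1/3}$ is a purely local condition: it controls $N^{-2/3}\|P_{\le N}\cdot\|_{W^{1,3}(M_{>r'})}$ for each fixed $r'>0$, with no uniform statement at $r'=0$, and the entire point of $\widehat{B}_{3,V}^{1/3}$ is that it is strictly larger than $\widehat{B}_{3,\infty}^{1/3}$. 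So the $\tau^{-1/3}$ rate on the full $M$ is simply not available. The only global rate you have is $\|\mathcal{U}^\tau\|_{W^{1,3}(M)}\lesssim\tau^{-1/2}\|\mathcal{U}\|_{L^3(M)}$. If you use that honestly, the $\sigma$-integrand is of size $\sigma^{-1/2}\cdot\sigma^{-1/3}\phi(2\sigma)\cdot(2s-\sigma)^{-1/2}$, and after rescaling $\sigma=su$ a factor $s^{-1/3}$ survives, so DCT does not close the argument. The fix, and the route the paper takes, is to keep \emph{all three} norms restricted to $M_{\ge r/4}$ (H\"older over $A$ only produces $A$-norms anyway; enlarging two of them to $M$ is where the loss comes from) and to use the Besov-V vanishing for \emph{all three} slots: writing $B(t,s)=s^{1/3}\|\mathcal{U}(t)^s\|_{W^{1,3}(M_{\ge r/4})}$ and $C(t,s)=s^{1/3}\|\mathcal{Y}(t)^s\|_{W^{1,3}(M_{\ge r/4})}$, both $\|B(\cdot,s)\|_{L_t^3}$ and $\|C(\cdot,s)\|_{L_t^3}$ are bounded and tend to $0$ as $s\to 0$, each slot contributes a full $\tau^{-1/3}$, and the $s$-powers cancel exactly under $\sigma=s\tau$.
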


\begin{proof}
We split (\ref{eq:1st_error_est}) into 2 regions: $M_{<r/4}$ and
$M_{\geq r/4}$. Observe that
\begin{align*}
 & \int_{I}\left|\eta\right|\int_{0+}^{s}\mathrm{d}\sigma\int_{M_{<r/4}}\left|\mathcal{U}^{2\sigma}\right|_{J^{1}}\left|\mathcal{Y}^{2\sigma}\right|_{J^{1}}\left|\mathcal{Y}^{4s-2\sigma}\right|_{J^{1}}\\
\lesssim & \int_{I}\left|\eta\right|\int_{0+}^{s}\mathrm{d}\sigma\;\left\Vert \mathcal{U}^{2\sigma}\right\Vert _{W^{1,3}\left(M_{<r/4}\right)}\left\Vert \mathcal{Y}^{2\sigma}\right\Vert _{W^{1,3}\left(M_{<r/4}\right)}\left\Vert \mathcal{Y}^{4s-2\sigma}\right\Vert _{W^{1,3}\left(M_{<r/4}\right)}\\
\lesssim & O_{r}\left(s^{\infty}\right)\int_{I}\mathrm{d}t\;\left|\eta\left(t\right)\right|\left\Vert \mathcal{U}(t)\right\Vert _{L^{3}\left(M\right)}^{3}\int_{0+}^{s}\mathrm{d}\sigma\;\left(\frac{1}{\sigma}\right)^{1/2}\xrightarrow{s\downarrow0}0.
\end{align*}

Define $B\left(t,s\right)=s^{\frac{1}{3}}\left\Vert \mathcal{U}\left(t\right)^{s}\right\Vert _{W^{1,3}\left(M_{\geq r/4}\right)}$
and $C\left(t,s\right)=s^{\frac{1}{3}}\left\Vert \mathcal{Y}\left(t\right)^{s}\right\Vert _{W^{1,3}\left(M_{\geq r/4}\right)}$. 

By \Factref{cutoff_interpol}, $\mathcal{Y}\in L_{t}^{3}\widehat{B}_{3,V}^{1/3}\mathfrak{X}$. 

Therefore, $\left\Vert B\left(t,s\right)\right\Vert _{L_{t}^{3}}$
and $\left\Vert C\left(t,s\right)\right\Vert _{L_{t}^{3}}$ are continuous
in $s$ and converge to $0$ as $s\to0$ by (\ref{eq:vanishing_property}).
Observe that
\begin{align*}
 & \int_{I}\left|\eta\right|\int_{0+}^{s}\mathrm{d}\sigma\int_{M_{\geq r/4}}\left|\mathcal{U}^{2\sigma}\right|_{J^{1}}\left|\mathcal{Y}^{2\sigma}\right|_{J^{1}}\left|\mathcal{Y}^{4s-2\sigma}\right|_{J^{1}}\\
\lesssim & \int_{I}\left|\eta\right|\int_{0+}^{s}\mathrm{d}\sigma\left\Vert \mathcal{U}^{2\sigma}\right\Vert _{W^{1,3}\left(M_{\geq r/4}\right)}\left\Vert \mathcal{Y}^{2\sigma}\right\Vert _{W^{1,3}\left(M_{\geq r/4}\right)}\left\Vert \mathcal{Y}^{4s-2\sigma}\right\Vert _{W^{1,3}\left(M_{\geq r/4}\right)}\\
= & \int_{I}\mathrm{d}t\left|\eta(t)\right|\int_{0+}^{s}\mathrm{d}\sigma\left(\frac{1}{\sigma}\right)^{\frac{2}{3}}\left(\frac{1}{2s-\sigma}\right)^{\frac{1}{3}}B\left(t,2\sigma\right)C\left(t,2\sigma\right)C\left(t,4s-2\sigma\right)\\
\lesssim_{\eta} & \int_{0+}^{s}\mathrm{d}\sigma\;\left(\frac{1}{\sigma}\right)^{\frac{2}{3}}\left(\frac{1}{2s-\sigma}\right)^{\frac{1}{3}}\left\Vert B\left(t,2\sigma\right)\right\Vert _{L_{t}^{3}}\left\Vert C\left(t,2\sigma\right)\right\Vert _{L_{t}^{3}}\left\Vert C\left(t,4s-2\sigma\right)\right\Vert _{L_{t}^{3}}\\
\stackrel{\sigma=s\tau}{=} & \int_{0+}^{1}\mathrm{d}\tau\;\left(\frac{1}{\tau}\right)^{\frac{2}{3}}\left(\frac{1}{2-\tau}\right)^{\frac{1}{3}}\left\Vert B\left(t,2s\tau\right)\right\Vert _{L_{t}^{3}}\left\Vert C\left(t,2s\tau\right)\right\Vert _{L_{t}^{3}}\left\Vert C\left(t,4s-2s\tau\right)\right\Vert _{L_{t}^{3}}\\
\xrightarrow[\mathrm{DCT}]{s\downarrow0} & \;0
\end{align*}
So (\ref{eq:1st_error_est}) is proven. For (\ref{eq:2nd_error_est}),
observe that
\begin{align*}
 & \int_{I}\left|\eta\right|\int_{0+}^{s}\mathrm{d}\sigma\int_{\partial M}\left|\mathcal{U}^{2\sigma}\right|_{J^{1}}\left|\mathcal{Y}^{2\sigma}\right|_{J^{1}}\left|\mathcal{Y}^{4s-2\sigma}\right|_{J^{2}}\\
\lesssim & O_{r}\left(s^{\infty}\right)\int_{I}\mathrm{d}t\;\left|\eta\left(t\right)\right|\left\Vert \mathcal{U}\left(t\right)\right\Vert _{L^{3}\left(M\right)}^{2}\int_{0+}^{s}\mathrm{d}\sigma\;\left\Vert \mathcal{U}\left(t\right)^{2\sigma}\right\Vert _{W^{1,3}\mathfrak{X}M|_{\partial M}}\\
\lesssim & O_{r}\left(s^{\infty}\right)\int_{I}\mathrm{d}t\;\left|\eta\left(t\right)\right|\left\Vert \mathcal{U}\left(t\right)\right\Vert _{L^{3}\left(M\right)}^{2}\int_{0+}^{s}\mathrm{d}\sigma\;\left\Vert \mathcal{U}\left(t\right)^{2\sigma}\right\Vert _{B_{3,1}^{1+1/3}\left(M\right)}\\
\lesssim & O_{r}\left(s^{\infty}\right)\int_{I}\mathrm{d}t\;\left|\eta\left(t\right)\right|\left\Vert \mathcal{U}\left(t\right)\right\Vert _{L^{3}\left(M\right)}^{3}\int_{0+}^{s}\mathrm{d}\sigma\;\left(\frac{1}{\sigma}\right)^{2/3}\\
 & \xrightarrow{s\downarrow0}0
\end{align*}
where we used (\ref{eq:est_bd}) to pass to the second line, and the
trace theorem to pass to the third line.
\end{proof}
Note that
\begin{align*}
\mathcal{N}\left(t,\sigma\right) & =\left(\partial_{\sigma}-3\Delta\right)\left(-\Div\left(\mathcal{U}^{2\sigma}\otimes\mathcal{Y}^{2\sigma}\right)^{\sigma}\right)=-2\Div\left(\Delta\mathcal{U}^{2\sigma}\otimes\mathcal{Y}^{2\sigma}\right)^{\sigma}-2\Div\left(\mathcal{U}^{2\sigma}\otimes\Delta\mathcal{Y}^{2\sigma}\right)^{\sigma}\\
 & +2\Delta\Div\left(\mathcal{U}^{2\sigma}\otimes\mathcal{Y}^{2\sigma}\right)^{\sigma}
\end{align*}

Now, we finally show
\[
\int_{I}\eta\left\langle \left\langle \mathcal{W}(s),\mathcal{Y}^{s}\right\rangle \right\rangle =\int_{I}\mathrm{d}t\;\eta\left(t\right)\left\langle \left\langle \mathcal{W}(t,s),\mathcal{Y}(t)^{s}\right\rangle \right\rangle \xrightarrow{s\downarrow0}0
\]

\begin{proof}[Proof of the commutator estimate]
 First we integrate by parts into three components:
\begin{align*}
 & \int_{I}\eta\left\langle \left\langle \mathcal{W}(s),\mathcal{Y}^{s}\right\rangle \right\rangle =\int_{I}\mathrm{d}t\;\eta\left(t\right)\int_{0+}^{s}\mathrm{d}\sigma\left\langle \left\langle \mathcal{N}\left(t,\sigma\right)^{3(s-\sigma)},\mathcal{Y}\left(t\right)^{s}\right\rangle \right\rangle \\
= & \int_{I}\mathrm{d}t\;\eta\left(t\right)\int_{0+}^{s}\mathrm{d}\sigma\left\langle \left\langle \mathcal{N}\left(t,\sigma\right),\mathcal{Y}\left(t\right)^{4s-3\sigma}\right\rangle \right\rangle \\
= & 2\int_{I}\eta\int_{0+}^{s}\mathrm{d}\sigma\left\langle \left\langle \Delta\mathcal{U}^{2\sigma}\otimes\mathcal{Y}^{2\sigma},\nabla\left(\mathcal{Y}^{4s-2\sigma}\right)\right\rangle \right\rangle +2\int_{I}\eta\int_{0+}^{s}\mathrm{d}\sigma\left\langle \left\langle \mathcal{U}^{2\sigma}\otimes\Delta\mathcal{Y}^{2\sigma},\nabla\left(\mathcal{Y}^{4s-2\sigma}\right)\right\rangle \right\rangle \\
 & \;-2\int_{I}\eta\int_{0+}^{s}\mathrm{d}\sigma\left\langle \left\langle \mathcal{U}^{2\sigma}\otimes\mathcal{Y}^{2\sigma},\nabla\left(\Delta\mathcal{Y}^{4s-2\sigma}\right)\right\rangle \right\rangle 
\end{align*}
Note that for the third component, we used \Factref{D_N-basic-properties}
to move the Laplacian. 

We now use the Penrose abstract index notation to estimate the three
components. To clean up the notation, we only focus on the integral
on $M$, with the other integrals $2\int_{I}\eta\int_{0+}^{s}\mathrm{d}\sigma\left(\cdot\right)$
in variables $t$ and $\sigma$ implicitly understood. We also use\textbf{
schematic identities} for linear combinations of similar-looking tensor
terms where we do not care how the indices contract (recall \Eqref{Weitzen_schematic}). 

By \Lemref{err_estimates}, it is easy to check that all the terms
with $R$ or $\nu$ will be negligible (going to $0$ in the limit),
and interchanging derivatives will be a negligible action. We write
$\approx$ to throw the negligible error terms away. 

First component:
\begin{align*}
 & \int_{M}\left\langle \Delta\mathcal{U}^{2\sigma}\otimes\mathcal{Y}^{2\sigma},\nabla\left(\mathcal{Y}^{4s-2\sigma}\right)\right\rangle =\cancel{\int_{M}R*\mathcal{U}^{2\sigma}*\mathcal{Y}^{2\sigma}*\nabla\left(\mathcal{Y}^{4s-2\sigma}\right)}+\int_{M}\begin{array}{c}
\nabla_{i}\nabla^{i}\left(\mathcal{U}^{2\sigma}\right)^{j}\left(\mathcal{Y}^{2\sigma}\right)^{l}\nabla_{j}\left(\mathcal{\mathcal{Y}}^{4s-2\sigma}\right)_{l}\end{array}\\
\approx & \cancel{\int_{\partial M}\nu_{i}\nabla^{i}\left(\mathcal{U}^{2\sigma}\right)^{j}\left(\mathcal{Y}^{2\sigma}\right)^{l}\nabla_{j}\left(\mathcal{\mathcal{Y}}^{4s-2\sigma}\right)_{l}}-\cancel{\int_{M}\begin{array}{c}
\nabla^{i}\left(\mathcal{U}^{2\sigma}\right)^{j}\nabla_{i}\left(\mathcal{Y}^{2\sigma}\right)^{l}\nabla_{j}\left(\mathcal{\mathcal{Y}}^{4s-2\sigma}\right)_{l}\end{array}}\\
 & -\int_{M}\begin{array}{c}
\nabla^{i}\left(\mathcal{U}^{2\sigma}\right)^{j}\left(\mathcal{Y}^{2\sigma}\right)^{l}\nabla_{i}\nabla_{j}\left(\mathcal{\mathcal{Y}}^{4s-2\sigma}\right)_{l}\end{array}
\end{align*}
Second component:
\begin{align*}
 & \int_{M}\left\langle \mathcal{U}^{2\sigma}\otimes\Delta\mathcal{\mathcal{Y}}^{2\sigma},\nabla\left(\mathcal{Y}^{4s-2\sigma}\right)\right\rangle =\cancel{\int_{M}\mathcal{U}^{2\sigma}*R*\mathcal{Y}^{2\sigma}*\nabla\left(\mathcal{\mathcal{Y}}^{4s-2\sigma}\right)}+\int_{M}\begin{array}{c}
\left(\mathcal{U}^{2\sigma}\right)^{j}\nabla_{i}\nabla^{i}\left(\mathcal{Y}^{2\sigma}\right)^{l}\nabla_{j}\left(\mathcal{\mathcal{Y}}^{4s-2\sigma}\right)_{l}\end{array}\\
\approx & \cancel{\int_{\partial M}\left(\mathcal{U}^{2\sigma}\right)^{j}\nu_{i}\nabla^{i}\left(\mathcal{Y}^{2\sigma}\right)^{l}\nabla_{j}\left(\mathcal{\mathcal{Y}}^{4s-2\sigma}\right)_{l}}-\cancel{\int_{M}\begin{array}{c}
\nabla_{i}\left(\mathcal{U}^{2\sigma}\right)^{j}\nabla^{i}\left(\mathcal{Y}^{2\sigma}\right)^{l}\nabla_{j}\left(\mathcal{\mathcal{Y}}^{4s-2\sigma}\right)_{l}\end{array}}\\
 & -\int_{M}\begin{array}{c}
\left(\mathcal{U}^{2\sigma}\right)^{j}\nabla^{i}\left(\mathcal{Y}^{2\sigma}\right)^{l}\nabla_{i}\nabla_{j}\left(\mathcal{\mathcal{Y}}^{4s-2\sigma}\right)_{l}\end{array}
\end{align*}
For the third component, we use the identity $\nabla\left(R*U\right)=\nabla R*U+R*\nabla U$
to compute:
\begin{align*}
 & -\int_{M}\left\langle \mathcal{U}^{2\sigma}\otimes\mathcal{\mathcal{Y}}^{2\sigma},\nabla\left(\Delta\mathcal{\mathcal{Y}}^{4s-2\sigma}\right)\right\rangle =-\cancel{\int_{M}\mathcal{U}^{2\sigma}*\mathcal{\mathcal{Y}}^{2\sigma}*\nabla\left(R*\mathcal{Y}^{4s-2\sigma}\right)}-\int_{M}\begin{array}{c}
\left(\mathcal{U}^{2\sigma}\right)^{j}\left(\mathcal{Y}^{2\sigma}\right)^{l}\nabla_{j}\nabla^{i}\nabla_{i}\left(\mathcal{\mathcal{Y}}^{4s-2\sigma}\right)_{l}\end{array}\\
\approx & \cancel{\int_{M}\mathcal{U}^{2\sigma}*\mathcal{\mathcal{Y}}^{2\sigma}*R*\nabla\left(\mathcal{\mathcal{Y}}^{4s-2\sigma}\right)}-\int_{M}\begin{array}{c}
\left(\mathcal{U}^{2\sigma}\right)^{j}\left(\mathcal{Y}^{2\sigma}\right)^{l}\nabla^{i}\nabla_{j}\nabla_{i}\left(\mathcal{\mathcal{Y}}^{4s-2\sigma}\right)_{l}\end{array}\\
\approx & \cancel{\int_{M}\mathcal{U}^{2\sigma}*\mathcal{\mathcal{Y}}^{2\sigma}*\nabla\left(R*\mathcal{\mathcal{Y}}^{4s-2\sigma}\right)}-\int_{M}\begin{array}{c}
\left(\mathcal{U}^{2\sigma}\right)^{j}\left(\mathcal{Y}^{2\sigma}\right)^{l}\nabla^{i}\nabla_{i}\nabla_{j}\left(\mathcal{\mathcal{Y}}^{4s-2\sigma}\right)_{l}\end{array}\\
\approx & -\cancel{\int_{\partial M}\left(\mathcal{U}^{2\sigma}\right)^{j}\left(\mathcal{Y}^{2\sigma}\right)^{l}\nu^{i}\nabla_{i}\nabla_{j}\left(\mathcal{\mathcal{Y}}^{4s-2\sigma}\right)_{l}}+\int_{M}\begin{array}{c}
\nabla^{i}\left(\mathcal{U}^{2\sigma}\right)^{j}\left(\mathcal{Y}^{2\sigma}\right)^{l}\nabla_{i}\nabla_{j}\left(\mathcal{\mathcal{Y}}^{4s-2\sigma}\right)_{l}\end{array}\\
 & +\int_{M}\begin{array}{c}
\left(\mathcal{U}^{2\sigma}\right)^{j}\nabla^{i}\left(\mathcal{Y}^{2\sigma}\right)^{l}\nabla_{i}\nabla_{j}\left(\mathcal{\mathcal{Y}}^{4s-2\sigma}\right)_{l}\end{array}
\end{align*}
By adding them up, we are done.
\end{proof}

\appendix

\section{Local analysis\label{sec:Local_analysis}}

Let $M$ be as in \Eqref{Euler}. Throughout this section, we write
$e^{t\Delta}$ for the absolute Neumann heat flow, as we will not
need another heat flow.

Assume the absolute Neumann heat kernel is already constructed, with
off-diagonal decay (\Factref{off-diagonal-decay}). 

As before, define $P_{\leq N}=e^{\frac{1}{N^{2}}\Delta}$ for $N>0$
and $P_{N}=P_{\leq N}-P_{\leq\frac{N}{2}}$ for $N>1,N\in2^{\mathbb{Z}}$.

Let $\chi_{r}=1-\psi_{r}$ (see \Eqref{cutoff}). 

Then we have the localized Bernstein estimates: 
\begin{thm}
\label{thm:local_bernstein1}For any $r>0$; $m_{1},m_{2}\in\mathbb{N}_{0}$;
$p\in\left(1,\infty\right);N\geq1$ and $X\in W^{m_{1},p}\Omega\left(M\right)$: 

$\left\Vert P_{\leq N}X\right\Vert _{W^{m_{1}+m_{2},p}\left(M_{\geq2r}\right)}\lesssim_{r,m_{1},m_{2},p}N^{m_{2}}\left\Vert X\right\Vert _{W^{m_{1},p}\left(M_{\geq r}\right)}+O_{r}\left(\frac{1}{N^{\infty}}\right)\left\Vert X\right\Vert _{L^{p}\left(M_{\leq3r}\right)}$
\end{thm}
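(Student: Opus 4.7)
The plan is a cutoff decomposition of $X$ combined with off-diagonal decay of the heat kernel. The portion of $X$ near the boundary contributes only $O_r(N^{-\infty})$, since $K(1/N^2,\cdot,\cdot)$ decays rapidly off the diagonal, while the interior portion is controlled by a global Bernstein estimate because its support stays uniformly inside $\accentset{\circ}{M}$.

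Concretely, pick a smooth cutoff $\phi$ with $\phi=1$ on $M_{\geq 3r/2}$ and $\supp\phi\subset M_{\geq r}$, and decompose $X=\phi X+(1-\phi)X$. By Leibniz, $\|\phi X\|_{W^{m_1,p}(M)}\lesssim_r \|X\|_{W^{m_1,p}(M_{\geq r})}$, while $\supp((1-\phi)X)\subset M_{\leq 3r/2}\subset M_{\leq 3r}$. For the interior piece, $\phi X$ is compactly supported in $\accentset{\circ}{M}$, so a local mollification $Y_\varepsilon\to\phi X$ in $W^{m_1,p}$ yields smooth functions whose supports stay bounded away from $\partial M$; these lie in $\mathscr{D}_N\Omega$ vacuously, so \Factref{D_N-basic-properties} gives
\[
\|P_{\leq N}Y_\varepsilon\|_{W^{m_1+m_2,p}(M)}\lesssim_{m_1,m_2,p} N^{m_2}\|Y_\varepsilon\|_{W^{m_1,p}(M)}.
\]
Passing $\varepsilon\downarrow 0$ via continuity of $P_{\leq N}$ on $L^p$ and weak closedness of Sobolev balls produces
\[
\|P_{\leq N}(\phi X)\|_{W^{m_1+m_2,p}(M)}\lesssim_r N^{m_2}\|X\|_{W^{m_1,p}(M_{\geq r})}.
\]

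For the boundary piece, for $x\in M_{\geq 2r}$ and $y\in\supp((1-\phi)X)$ one has $\mathrm{dist}(x,y)\geq r/2$, so \Factref{off-diagonal-decay} yields
\[
|D_x^\gamma K(1/N^2,x,y)|=O_r(N^{-\infty})
\]
for every multi-index $\gamma$, uniformly in such $(x,y)$. Integrating against $(1-\phi)X$, then applying Hölder on the compact slab $M_{\leq 3r/2}$ (finite measure) and Minkowski, gives
\[
\|P_{\leq N}((1-\phi)X)\|_{W^{m_1+m_2,p}(M_{\geq 2r})}\lesssim O_r(N^{-\infty})\|X\|_{L^p(M_{\leq 3r})}.
\]
Adding the two bounds concludes the argument.

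The main obstacle is the approximation step for the interior piece: one needs \Factref{D_N-basic-properties} (proved in the prequel via semigroup theory on heated forms) combined with the observation that mollification of an interior-supported $W^{m_1,p}$ form stays in $\mathscr{D}_N\Omega$. The off-diagonal step is essentially immediate once \Factref{off-diagonal-decay} is available, so the genuine difficulty of the localization is deferred entirely to the construction of the Hodge-Neumann heat kernel in \Secref{Construction-of-heat-kernel}.
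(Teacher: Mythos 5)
Your proof is correct and follows essentially the same approach as the paper: a cutoff decomposition into an interior piece (handled by the Bernstein-type bounds from the heat-flow estimates) and a boundary piece (handled by off-diagonal decay of the heat kernel). The paper uses its specific cutoffs $\chi_{2r}$ and $\psi_{2r}\psi_{4r}$ with $1-\chi_{2r}=\psi_{2r}\psi_{4r}$, whereas you introduce a generic $\phi$, but the structure is identical; and you helpfully make explicit the mollification/weak-closure argument needed to apply \Factref{D_N-basic-properties} to the interior piece, a density step the paper leaves implicit when it cites the "standard Bernstein estimate."
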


\begin{proof}
Observe that $1-\chi_{2r}=\psi_{2r}=\psi_{2r}\psi_{4r}$. Then:
\begin{align*}
\left\Vert P_{\leq N}X\right\Vert _{W^{m_{1}+m_{2},p}\left(M_{\geq2r}\right)} & \lesssim\left\Vert P_{\leq N}\left(\chi_{2r}X\right)\right\Vert _{W^{m_{1}+m_{2},p}\left(M_{\geq2r}\right)}+\left\Vert P_{\leq N}\left(\psi_{2r}\psi_{4r}X\right)\right\Vert _{W^{m_{1}+m_{2},p}\left(M_{\geq2r}\right)}\\
 & \lesssim_{m_{1},m_{2}}N^{m_{2}}\left\Vert \chi_{2r}X\right\Vert _{W^{m_{1},p}\left(M\right)}+O_{r}\left(\frac{1}{N^{\infty}}\right)\left\Vert \psi_{4r}X\right\Vert _{L^{p}\left(M\right)}\\
 & \lesssim_{r}N^{m_{2}}\left\Vert X\right\Vert _{W^{m_{1},p}\left(M_{\geq r}\right)}+O_{r}\left(\frac{1}{N^{\infty}}\right)\left\Vert X\right\Vert _{L^{p}\left(M_{\leq3r}\right)}
\end{align*}
where we have used the standard Bernstein estimate (\Thmref{global_bernstein})
and the off-diagonal decay of the heat kernel to pass from the first
line to the second line ($\supp\,\psi_{2r}\subseteq M_{\leq\frac{3}{2}r}$
which does not intersect $M_{\geq2r}$).
\end{proof}
\begin{cor}
\label{cor:local_bernstein2}For any $r,C_{1},C_{2}>0$; $N\geq1;p\in\left(1,\infty\right)$
and $X\in\mathscr{D}'_{N}\Omega\left(M\right):$
\begin{align*}
\left\Vert \left(P_{\leq C_{1}N}-P_{\leq C_{2}N}\right)X\right\Vert _{L^{p}\left(M_{\geq2r}\right)} & \lesssim_{C_{1},C_{2},r,p}\frac{1}{N^{2}}\left\Vert P_{\leq2\max\left(C_{1},C_{2}\right)N}X\right\Vert _{W^{2,p}\left(M_{\geq r}\right)}+O_{C_{1},C_{2},r}\left(\frac{1}{N^{\infty}}\right)\left\Vert X\right\Vert _{L^{p}\left(M\right)}\\
 & \lesssim_{C_{1},C_{2},r,p}\frac{1}{N}\left\Vert P_{\leq3\max\left(C_{1},C_{2}\right)N}X\right\Vert _{W^{1,p}\left(M_{\geq r/2}\right)}+O_{C_{1},C_{2},r}\left(\frac{1}{N^{\infty}}\right)\left\Vert X\right\Vert _{L^{p}\left(M\right)}
\end{align*}
\end{cor}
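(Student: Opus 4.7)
My plan is to prove the first bound directly and then derive the second from it via \Thmref{local_bernstein1}. Write $K := \max(C_1, C_2)$ and assume WLOG $C_1 = K \geq C_2$.

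For the first inequality, apply the fundamental theorem of calculus to the semigroup to write
\[
P_{\leq C_2 N} - P_{\leq C_1 N} = \int_{1/(C_1 N)^2}^{1/(C_2 N)^2} \Delta e^{t \Delta}\,dt,
\]
an integral of length $O_{C_1, C_2}(1/N^2)$. Since $\eta := 1/(2KN)^2$ lies strictly below both endpoints, factor $e^{t\Delta} = e^{(t-\eta)\Delta} \circ P_{\leq 2KN}$ and set $U := P_{\leq 2KN} X$, a heated form. The integrand becomes $e^{(t-\eta)\Delta}\Delta U$ with $0 < t - \eta = O(1/N^2)$, so the task reduces to a uniform $L^p(M_{\geq 2r})$ bound on this quantity.

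To localize, I introduce a smooth cutoff $\phi$ with $\phi \equiv 1$ on $M_{\geq 3r/2}$ and $\supp \phi \subseteq M_{\geq r}$, and split $\Delta U = \phi \Delta U + (1-\phi)\Delta U$. The first piece satisfies $\|e^{(t-\eta)\Delta}(\phi\Delta U)\|_{L^p(M_{\geq 2r})} \leq \|\phi\Delta U\|_{L^p} \lesssim \|U\|_{W^{2,p}(M_{\geq r})}$ by $L^p$-boundedness of the heat flow together with the Weitzenbock formula \eqref{eq:Weitzen_schematic}. The second piece is supported in $M_{\leq 3r/2}$, which is separated from $M_{\geq 2r}$ by $r/2$, so \Factref{off-diagonal-decay} bounds the contributing kernel by $O_r(1/N^\infty)$; combined with the crude estimate $\|\Delta U\|_{L^p(M)} \lesssim N^2 \|X\|_{L^p}$ from \Factref{Fact_heat}, this gives an error of $O_r(1/N^\infty)\|X\|_{L^p}$. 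Integrating in $t$ completes the first bound.

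For the second bound, use the semigroup identity $P_{\leq N_1}P_{\leq N_2} = P_{\leq N_0}$ with $1/N_0^2 = 1/N_1^2 + 1/N_2^2$ to factor $P_{\leq 2KN} = P_{\leq M} \circ P_{\leq 3KN}$, where $M = 6KN/\sqrt{5} = O(N)$. Applying \Thmref{local_bernstein1} to the heated form $P_{\leq 3KN} X$ with $(m_1, m_2) = (1,1)$, $r \to r/2$, and $N \to M$ yields
\[
\|P_{\leq 2KN} X\|_{W^{2,p}(M_{\geq r})} \lesssim M \|P_{\leq 3KN} X\|_{W^{1,p}(M_{\geq r/2})} + O_r(1/N^\infty)\|X\|_{L^p}.
\]
Dividing by $N^2$ and combining with the first bound produces the second. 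The main obstacle is purely a bookkeeping one: tracking the exact constants ($2K$ versus $3K$) and cutoff radii ($r$, $r/2$, $3r/2$) so that the stated inclusions line up; conceptually this is just FTC factorization (as in \Thmref{global_bernstein}) combined with \Thmref{local_bernstein1} and the off-diagonal decay of the kernel.
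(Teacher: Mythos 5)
Your argument is correct and follows essentially the same route as the paper: FTC on the semigroup to produce an integral of length $O(1/N^2)$, factoring out $P_{\leq 2KN}$, then localizing with a cutoff and off-diagonal decay. The only cosmetic difference is that you re-derive the localization step inline rather than citing \Thmref{local_bernstein1} with $(m_1,m_2)=(2,0)$ as the paper does, and you spell out the factorization $P_{\leq 2KN}=P_{\leq M}P_{\leq 3KN}$ for the second bound, which the paper dismisses as trivial.
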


\begin{proof}
WLOG $C_{1}>C_{2}>0$. Let $C=2\max\left(C_{1},C_{2}\right)$. Then
by FTC:
\begin{align*}
\left\Vert \left(P_{\leq C_{1}N}-P_{\leq C_{2}N}\right)X\right\Vert _{L^{p}\left(M_{\geq2r}\right)} & \leq\int_{\frac{1}{C_{1}^{2}N^{2}}-\frac{1}{C^{2}N^{2}}}^{\frac{1}{C_{2}^{2}N^{2}}-\frac{1}{C^{2}N^{2}}}\text{d}t\;\left\Vert e^{\left(t+\frac{1}{C^{2}N^{2}}\right)\Delta}X\right\Vert _{W^{2,p}\left(M_{\geq2r}\right)}\\
 & \lesssim_{C_{1},C_{2},r,p}\int_{\frac{1}{C_{1}^{2}N^{2}}-\frac{1}{C^{2}N^{2}}}^{\frac{1}{C_{2}^{2}N^{2}}-\frac{1}{C^{2}N^{2}}}\text{d}t\;\left(\left\Vert e^{\frac{\Delta}{C^{2}N^{2}}}X\right\Vert _{W^{2,p}\left(M_{\geq r}\right)}+O_{r}\left(t^{\infty}\right)\left\Vert e^{\frac{\Delta}{2N^{2}}}X\right\Vert _{L^{p}\left(M\right)}\right)\\
 & \lesssim_{C_{1},C_{2}}\frac{1}{N^{2}}\left\Vert P_{\leq CN}X\right\Vert _{W^{2,p}\left(M_{\geq r}\right)}+O_{C_{1},C_{2},r}\left(\frac{1}{N^{\infty}}\right)\left\Vert X\right\Vert _{L^{p}\left(M\right)}
\end{align*}
We have used \Thmref{local_bernstein1} to pass to the second line. 

The rest is trivial.
\end{proof}
\begin{cor}
\label{cor:equiv_cN}Let $p\in\left(1,\infty\right)$ and $X\in L^{p}\Omega\left(M\right)$.
Then the following conditions are equivalent:
\begin{enumerate}
\item $N^{\frac{1}{p}-1}\left\Vert P_{\leq N}X\right\Vert _{W^{1,p}\left(M_{>r}\right)}\xrightarrow{N\to\infty}0$
$\forall r>0$
\item $N^{\frac{1}{p}}\left\Vert \left(P_{\leq C_{1}N}-P_{\leq C_{2}N}\right)X\right\Vert _{L^{p}\left(M_{>r}\right)}\xrightarrow{N\to\infty}0$
$\forall r,C_{1},C_{2}>0$
\item $N^{\frac{1}{p}}\left\Vert P_{>N}X\right\Vert _{L^{p}\left(M_{>r}\right)}\xrightarrow{N\to\infty}0$
$\forall r>0$
\end{enumerate}
\end{cor}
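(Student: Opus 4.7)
The plan is to close the cycle $(1) \Rightarrow (2) \Rightarrow (3) \Rightarrow (1)$. The first two implications follow directly from the local Bernstein tools already established, while the third is the main obstacle.

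For $(1) \Rightarrow (2)$, I would invoke the $W^{1,p}$-form of \Corref{local_bernstein2} and multiply by $N^{1/p}$ to get
\[
N^{1/p}\|(P_{\leq C_{1}N}-P_{\leq C_{2}N})X\|_{L^{p}(M_{\geq 2r})} \lesssim_{r,C_1,C_2} N^{1/p-1}\|P_{\leq 3\max(C_1,C_2)N}X\|_{W^{1,p}(M_{\geq r/2})} + N^{1/p}O_r(N^{-\infty})\|X\|_{L^p(M)}.
\]
Both terms vanish as $N\to\infty$: the first by hypothesis (1) applied at radius $r/2$ (the constant factor $3\max(C_1,C_2)$ is harmless upon rescaling $N$), the second trivially.

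For $(2) \Rightarrow (3)$, I would use the $L^p$-approximation $P_{\leq K}X \to X$ from \Factref{Fact_heat} to telescope, for $N = 2^{k_0}$,
\[
P_{>N}X = \sum_{j>k_0} Z_j \quad \text{in } L^p(M), \qquad Z_j := (P_{\leq 2^j}-P_{\leq 2^{j-1}})X.
\]
Setting $\varepsilon_j := 2^{j/p}\|Z_j\|_{L^p(M_{>r})}$ (which $\to 0$ by (2)), the triangle inequality gives
\[
N^{1/p}\|P_{>N}X\|_{L^p(M_{>r})} \leq \sum_{i\geq 1} 2^{-i/p}\,\varepsilon_{k_0+i} \xrightarrow{k_0\to\infty} 0
\]
by dominated convergence, since $\sum_i 2^{-i/p}<\infty$.

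The crux is $(3)\Rightarrow(1)$. The key observation is the semigroup identity $e^{\Delta/2^{2k}}-e^{4\Delta/2^{2k}} = e^{\Delta/2^{2k}}(\mathrm{id}-e^{3\Delta/2^{2k}})$, which rewrites
\[
Z_k = P_{\leq 2^k}\bigl(P_{>2^k/\sqrt{3}}X\bigr).
\]
Applying \Thmref{local_bernstein1} (with $m_1=0$, $m_2=1$) to the function $P_{>2^k/\sqrt{3}}X \in L^p\Omega(M)$ yields
\[
\|Z_k\|_{W^{1,p}(M_{\geq 2r})} \lesssim_r 2^k\,\|P_{>2^k/\sqrt{3}}X\|_{L^p(M_{\geq r})} + O_r(2^{-k\cdot\infty})\|X\|_{L^p(M)}.
\]
Denoting $\eta_k := 2^{k/p}\|P_{>2^k/\sqrt{3}}X\|_{L^p(M_{\geq r})}$ (with $\eta_k\to 0$ by (3)), I telescope $P_{\leq 2^{k_0}}X = P_{\leq 1}X + \sum_{k=1}^{k_0}Z_k$ and substitute $i=k_0-k$ to obtain
\[
2^{k_0(1/p-1)}\|P_{\leq 2^{k_0}}X\|_{W^{1,p}(M_{\geq 2r})} \lesssim o(1) + \sum_{i=0}^{k_0-1}2^{-i(1-1/p)}\,\eta_{k_0-i},
\]
which tends to $0$ as $k_0\to\infty$ by dominated convergence, the geometric weights $2^{-i(1-1/p)}$ being summable since $p>1$. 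The principal difficulty is recognizing the band-pass identity $Z_k = P_{\leq 2^k}(P_{>2^k/\sqrt{3}}X)$: without it, the naive Bernstein bound $\|\nabla P_{\leq N}X\|_{L^p}\lesssim N\|X\|_{L^p}$ is too crude, because $\|X\|_{L^p}$ does not decay, whereas $\|P_{>2^k/\sqrt{3}}X\|_{L^p}$ does by (3), and this decay is precisely what drives the summation argument.
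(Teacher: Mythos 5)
Your proof is correct and uses essentially the same tools and mechanism as the paper: the same local Bernstein estimates (\thmref{local_bernstein1}, \corref{local_bernstein2}), the same dyadic telescoping, and the same summable-geometric-weights/DCT argument. The only deviations are cosmetic --- you close the three-implication cycle $(1)\Rightarrow(2)\Rightarrow(3)\Rightarrow(1)$ instead of proving the paper's four implications $(1)\Leftrightarrow(2)\Leftrightarrow(3)$, and in the hardest step you factor $Z_k = P_{\leq 2^k}\bigl(P_{>2^k/\sqrt{3}}X\bigr)$ to exploit condition $(3)$ directly, whereas the paper factors $P_K = P_{\leq 2K}\bigl(P_{\leq C_1 K}-P_{\leq C_2 K}\bigr)$ and exploits condition $(2)$; both feed the same Bernstein bound.
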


\begin{proof}
It is trivial to show $(3)\implies(2)$ as $P_{\leq C_{1}N}-P_{\leq C_{2}N}$=$P_{>C_{2}N}-P_{>C_{1}N}$.

Next, we show $(2)\implies(3)$. Let 
\[
w(N)=N^{\frac{1}{p}}\left\Vert P_{N}X\right\Vert _{L^{p}\left(M_{>r}\right)}=N^{\frac{1}{p}}\left\Vert \left(P_{\leq N}-P_{\leq N/2}\right)X\right\Vert _{L^{p}\left(M_{>r}\right)}\xrightarrow{N\to\infty}0
\]
Then: 
\begin{align*}
N^{\frac{1}{p}}\left\Vert P_{>N}X\right\Vert _{L^{p}\left(M_{>r}\right)} & \leq N^{\frac{1}{p}}\sum_{\substack{K\in2^{\mathbb{Z}}\\
K>N
}
}\left\Vert P_{K}X\right\Vert _{L^{p}\left(M_{>r}\right)}=N^{\frac{1}{p}}\sum_{\substack{K\in2^{\mathbb{Z}}\\
K>N
}
}K^{-\frac{1}{p}}w(K)\\
 & \lesssim\left\Vert w(\kappa)\right\Vert _{l^{\infty}\left(\kappa>N,\kappa\in2^{\mathbb{Z}}\right)}\xrightarrow{N\to\infty}0
\end{align*}

We proceed to show $(1)\implies(2)$. By \Corref{local_bernstein2}:
\begin{align*}
N^{\frac{1}{p}}\left\Vert \left(P_{\leq C_{1}N}-P_{\leq C_{2}N}\right)X\right\Vert _{L^{p}\left(M_{>r}\right)} & \lesssim_{C_{1},C_{2}}N^{\frac{1}{p}-1}\left\Vert P_{\leq3\max\left(C_{1},C_{2}\right)N}X\right\Vert _{W^{1,p}\left(M_{\geq r/4}\right)}+O_{C_{1},C_{2},r}\left(\frac{1}{N^{\infty}}\right)\left\Vert X\right\Vert _{L^{p}\left(M\right)}\\
 & \xrightarrow{N\to\infty}0
\end{align*}

Finally, we show $(2)\implies(1)$. Let $N_{0}\geq1$ and $N_{0}\in2^{\mathbb{Z}}$.
There are constants $C_{1},C_{2}>0$ such that $P_{N}=P_{\leq2N}\left(P_{\leq C_{1}N}-P_{\leq C_{2}N}\right)$.
\begin{align}
 & \limsup_{N\to\infty}N^{\frac{1}{p}-1}\left\Vert P_{\leq N}X\right\Vert _{W^{1,p}\left(M_{>r}\right)}=\limsup_{N\to\infty}N^{\frac{1}{p}-1}\left\Vert \left(P_{\leq N}-P_{\leq N_{0}}\right)X\right\Vert _{W^{1,p}\left(M_{>r}\right)}\nonumber \\
\lesssim & \limsup_{N\to\infty}N^{\frac{1}{p}-1}\sum_{\substack{K\in2^{\mathbb{Z}}\\
N_{0}<K\leq N
}
}\left\Vert P_{K}X\right\Vert _{W^{1,p}\left(M_{>r}\right)}\nonumber \\
\lesssim & \limsup_{N\to\infty}N^{\frac{1}{p}-1}\sum_{\substack{K\in2^{\mathbb{Z}}\\
N_{0}<K\leq N
}
}\left(K\left\Vert \left(P_{\leq C_{1}K}-P_{\leq C_{2}K}\right)X\right\Vert _{L^{p}\left(M_{>r/2}\right)}+O_{r}\left(\frac{1}{K^{\infty}}\right)\left\Vert X\right\Vert _{L^{p}\left(M\right)}\right)\nonumber \\
\lesssim & \limsup_{N\to\infty}N^{\frac{1}{p}-1}\sum_{\substack{K\in2^{\mathbb{Z}}\\
N_{0}<K\leq N
}
}K^{1-1/p}w\left(K\right)+\underbrace{\limsup_{N\to\infty}N^{\frac{1}{p}-1}O_{r}\left(\frac{1}{N_{0}^{\infty}}\right)\left\Vert X\right\Vert _{L^{p}\left(M\right)}}_{0}\label{eq:temp_1}
\end{align}
where $w\left(K\right):=K^{1/p}\left\Vert \left(P_{\leq C_{1}K}-P_{\leq C_{2}K}\right)X\right\Vert _{L^{p}\left(M_{>r/2}\right)}\xrightarrow{K\to\infty}0$.
Then we can bound (\ref{eq:temp_1}) by 

\[
\limsup_{N\to\infty}\sum_{\substack{K\in2^{\mathbb{Z}}\\
N_{0}<K\leq N
}
}\left(\frac{K}{N}\right)^{1-1/p}\left\Vert w(\kappa)\right\Vert _{l^{\infty}\left(\kappa\geq N_{0},\kappa\in2^{\mathbb{Z}}\right)}\lesssim\left\Vert w(\kappa)\right\Vert _{l^{\infty}\left(\kappa\geq N_{0},\kappa\in2^{\mathbb{Z}}\right)}
\]
But $N_{0}$ is arbitrary. Let $N_{0}\to\infty$ and we are done. 
\end{proof}
\begin{rem}
By repeating the proof, for $\mathcal{X}\in L_{t}^{p}L^{p}\Omega\left(M\right):$
\begin{align*}
 & \forall r>0:N^{\frac{1}{p}-1}\left\Vert P_{\leq N}\mathcal{X}\right\Vert _{L_{t}^{p}W^{1,p}\left(M_{>r}\right)}\xrightarrow{N\to\infty}0\\
\iff & \forall r>0:N^{\frac{1}{p}}\left\Vert P_{>N}\mathcal{X}\right\Vert _{L_{t}^{p}L^{p}\left(M_{>r}\right)}\xrightarrow{N\to\infty}0
\end{align*}
\end{rem}

We now prove a simple lemma from functional analysis.
\begin{lem}[Loss of norm]
\label{lem:func_anal_lemma} Let $X,Y$ be Banach spaces and $T:X\hookrightarrow Y$
is continuous injection. Let $\left(f_{j}\right)_{j\in\mathbb{N}_{1}}$be
a sequence in $X$ and $f\in X$. If $Tf_{j}\rightharpoonup Tf$ then
\[
\left\Vert f\right\Vert _{X}\leq\liminf_{j\to\infty}\left\Vert f_{j}\right\Vert _{X}
\]
\end{lem}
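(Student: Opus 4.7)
The plan is to reduce to a subsequence realizing the $\liminf$ and then extract a weakly convergent sub-subsequence in $X$, transporting that weak convergence through $T$ to identify the limit. We may assume $L:=\liminf_{j\to\infty}\|f_j\|_X<\infty$, else the inequality is trivial. Pick a subsequence $(f_{j_k})$ with $\|f_{j_k}\|_X\to L$. This subsequence is in particular bounded in $X$.

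Next I would invoke reflexivity of $X$ (which is the natural setting for the paper's applications, where $X$ is a Sobolev or Besov space built on $L^p$ with $p\in(1,\infty)$). By the Banach--Alaoglu/Eberlein--\v{S}muljan theorem, extract a further subsequence $f_{j_{k_l}}\rightharpoonup g$ weakly in $X$ for some $g\in X$. Since $T\in\mathcal{L}(X,Y)$ is continuous it is also weak-to-weak continuous, so $Tf_{j_{k_l}}\rightharpoonup Tg$ in $Y$. On the other hand, any subsequence of $(Tf_j)$ inherits the weak convergence to $Tf$, so $Tf_{j_{k_l}}\rightharpoonup Tf$; uniqueness of weak limits in $Y$ forces $Tg=Tf$, and injectivity of $T$ then gives $g=f$.

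Finally, by the standard weak lower semicontinuity of the norm on a Banach space,
\[
\|f\|_X = \|g\|_X \le \liminf_{l\to\infty}\|f_{j_{k_l}}\|_X = L = \liminf_{j\to\infty}\|f_j\|_X,
\]
which is the desired inequality.

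The only real issue is reflexivity of $X$: without it one cannot guarantee a weakly convergent subsequence of a bounded sequence, and the argument breaks. In the paper's usage $X$ is always a reflexive function space, so this is painless; if one insisted on full generality one would need an additional hypothesis (e.g.\ $X$ reflexive, or that $T^*(Y^*)$ is norm-dense in $X^*$ together with some compatible density) to make the same conclusion.
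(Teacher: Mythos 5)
Your argument is correct under the reflexivity assumption you flag, but it follows a genuinely different route than the paper. You pass to a bounded subsequence realizing the $\liminf$, invoke Eberlein--\v{S}mulian to extract a weakly convergent sub-subsequence $f_{j_{k_l}}\rightharpoonup g$ in $X$, push this through the weak-to-weak continuous map $T$ to force $Tg=Tf$ and hence $g=f$ by injectivity, and finish with weak lower semicontinuity of $\|\cdot\|_X$. The paper instead argues purely by duality: it asserts that $T^{*}:Y^{*}\to X^{*}$ has dense range, so
\[
\|f\|_{X}
=\sup_{\substack{y^{*}\in Y^{*}\\ \|T^{*}y^{*}\|_{X^{*}}=1}}\left|\left\langle f,T^{*}y^{*}\right\rangle\right|
=\sup_{\substack{y^{*}\in Y^{*}\\ \|T^{*}y^{*}\|_{X^{*}}=1}}\lim_{j\to\infty}\left|\left\langle f_{j},T^{*}y^{*}\right\rangle\right|
\leq\liminf_{j\to\infty}\|f_{j}\|_{X},
\]
with no sequential compactness extraction at all. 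What the paper's version buys is brevity, and it avoids naming reflexivity; but the density of $\mathrm{Ran}(T^{*})$ in $X^{*}$ is itself not a consequence of injectivity of $T$ alone --- it is equivalent to injectivity of $T^{**}$, which holds automatically when $X$ is reflexive but can fail otherwise (e.g.\ for the inclusion $\ell^{1}\hookrightarrow c_{0}$, whose adjoint has range $\ell^{1}$ with $\ell^{\infty}$-closure only $c_{0}$). So the paper's proof carries the same hidden hypothesis that you state explicitly. Your version is more transparent about this dependence, at the cost of a heavier extraction argument; both are valid in the reflexive setting actually used in the paper.
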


\begin{proof}
Note that $T^{*}:Y^{*}\to X^{*}$has dense image. Then 
\begin{align*}
\left\Vert f\right\Vert _{X} & =\sup_{\substack{\left\Vert x^{*}\right\Vert _{X^{*}}=1\\
x^{*}\in X^{*}
}
}\left|\left\langle f,x^{*}\right\rangle \right|=\sup_{\substack{y^{*}\in Y^{*}\\
\left\Vert T^{*}y^{*}\right\Vert _{X^{*}}=1
}
}\left|\left\langle f,T^{*}y^{*}\right\rangle \right|=\sup_{\substack{y^{*}\in Y^{*}\\
\left\Vert T^{*}y^{*}\right\Vert _{X^{*}}=1
}
}\lim_{j\to\infty}\left|\left\langle Tf_{j},y^{*}\right\rangle \right|\\
 & =\sup_{\substack{y^{*}\in Y^{*}\\
\left\Vert T^{*}y^{*}\right\Vert _{X^{*}}=1
}
}\lim_{j\to\infty}\left|\left\langle f_{j},T^{*}y^{*}\right\rangle \right|\leq\sup_{\substack{y^{*}\in Y^{*}\\
\left\Vert T^{*}y^{*}\right\Vert _{X^{*}}=1
}
}\liminf_{j\to\infty}\left\Vert f_{j}\right\Vert _{X}=\liminf_{j\to\infty}\left\Vert f_{j}\right\Vert _{X}
\end{align*}
\end{proof}
\begin{thm}
Let $p\in\left(1,\infty\right)$, $f\in\mathscr{D}_{N}\left(M\right)$,
and $X\in\widehat{B}_{p,V}^{1/p}\mathfrak{X}\left(M\right)$ (as in
\Defref{besov_v_space}). Then $fX\in\widehat{B}_{p,V}^{1/p}\mathfrak{X}$.
\end{thm}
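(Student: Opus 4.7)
Note that $fX \in L^p\mathfrak{X}$ trivially, so by \Corref{equiv_cN} it suffices to show $N^{1/p}\|P_{>N}(fX)\|_{L^p(M_{>r})} \to 0$ as $N\to\infty$ for every $r > 0$. The plan is to use the commutator decomposition
\[
P_{>N}(fX) \;=\; f\,P_{>N}X \;-\; [P_{\leq N}, f]\,X.
\]
The first summand is directly controlled by the hypothesis on $X$: $\|f P_{>N} X\|_{L^p(M_{>r})} \leq \|f\|_\infty \|P_{>N} X\|_{L^p(M_{>r})}$, and $N^{1/p}$ times this tends to $0$.

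For the commutator, I claim the stronger global estimate
\[
\|[P_{\leq N},f]X\|_{L^p(M)} \;\lesssim_f\; N^{-1}\|X\|_{L^p(M)},
\]
which then yields $N^{1/p}\|[P_{\leq N},f]X\|_{L^p(M_{>r})} \leq N^{1/p-1}\|X\|_{L^p(M)} \to 0$ since $p > 1$. To prove it, start from smooth $X \in \mathscr{D}_N\mathfrak{X}$ (dense in $L^p\mathfrak{X}$) and differentiate the map $s \mapsto e^{(1/N^2-s)\Delta} f e^{s\Delta} X$ to obtain the Duhamel-type identity
\[
[P_{\leq N},f]\,X \;=\; \int_0^{1/N^2} e^{(1/N^2-s)\Delta}\,[\Delta,f]\,e^{s\Delta} X\,ds.
\]
By the Weitzenbock formula \eqref{eq:Weitzen_schematic}, the curvature contributions in $\Delta(fY) - f\Delta Y$ cancel exactly (since $R\ast(fY) = f\,(R\ast Y)$ as $f$ is scalar), leaving
\[
[\Delta,f]\,Y \;=\; 2\,\nabla f \cdot \nabla Y + (\Delta_{\mathrm{sc}} f)\,Y,
\]
which is first-order in $Y$ with smooth bounded coefficients. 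Combined with the smoothing bound $\|e^{s\Delta}Y\|_{W^{1,p}(M)} \lesssim s^{-1/2}\|Y\|_{L^p}$ from \Factref{Fact_heat} and the $L^p$-boundedness of $e^{(1/N^2-s)\Delta}$, the integrand has $L^p$ norm $\lesssim_f s^{-1/2}\|X\|_{L^p}$; integration over $s \in (0, 1/N^2)$ gives $O_f(N^{-1})\|X\|_{L^p}$. The estimate then extends from $\mathscr{D}_N\mathfrak{X}$ to all of $L^p\mathfrak{X}$ by density, both sides being continuous maps $L^p \to L^p$ in $X$.

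\textbf{Main obstacle.} The principal technical point is justifying the Duhamel identity rigorously for the Neumann heat semigroup on vector fields; the $\mathscr{D}_N$ hypothesis on $f$ is convenient here because it guarantees that multiplication by $f$ respects the boundary conditions encoded in the semigroup domain, keeping the product-rule computation in $s$ clean at every order. Once the Duhamel formula is in hand, the Weitzenbock cancellation of curvature terms reduces matters to a straightforward beta-type integral and a density argument.
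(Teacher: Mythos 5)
Your proof is correct and follows essentially the same strategy as the paper: reduce to a commutator estimate, derive a Duhamel-type integral representation for the commutator, observe by Weitzenbock that the commutator of $\Delta$ with multiplication by $f$ is a first-order operator (the curvature terms cancel because $f$ is scalar), and integrate the smoothing estimate $\left\Vert e^{\sigma\Delta}Y\right\Vert _{W^{1,p}}\lesssim\sigma^{-1/2}\left\Vert Y\right\Vert _{L^{p}}$. The differences are mainly in bookkeeping. You pass through \Corref{equiv_cN}(3) ($L^p$ control of $P_{>N}$), split $P_{>N}(fX)=fP_{>N}X-[P_{\leq N},f]X$, and prove the global bound $\left\Vert [P_{\leq N},f]X\right\Vert _{L^{p}(M)}\lesssim_{f}N^{-1}\left\Vert X\right\Vert _{L^{p}}$; the paper instead works with \Corref{equiv_cN}(1), defines $\mathcal{W}(s)=f^{s}X^{s}-(fX)^{s}$ (heating $f$ as well, so that $\mathcal{N}(s)=(\partial_{s}-\Delta)\mathcal{W}(s)$ is a pure first-order bilinear expression $D^{1}f^{s}*D^{1}X^{s}$), and proves $\left\Vert \mathcal{W}(s)\right\Vert _{W^{1,p}(M)}\lesssim\left\Vert X\right\Vert _{L^{p}}$. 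These two bounds are of the same strength, since $N^{1/p-1}=(\sqrt{s})^{1-1/p}$ under the dictionary $N=1/\sqrt{s}$: you keep the extra $\sqrt{s}$ factor on the $L^p$ side, the paper trades it for one derivative. On the technical obstacle you flag (justifying the Duhamel identity), the paper sidesteps the issue by taking the Duhamel formula on $[\varepsilon,s]$, showing the boundary term $\mathcal{W}(\varepsilon)^{s-\varepsilon}$ goes to zero weakly in $L^{p}$, and invoking lower semicontinuity of the norm (\Lemref{func_anal_lemma}); your route of first proving everything for $X\in\mathscr{D}_{N}\mathfrak{X}$, where the FTC is unproblematic, and extending by density in $L^p$ is equally valid and more elementary, at the cost of one extra (easy) density step. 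Your remark that $f\in\mathscr{D}_{N}(M)$ ensures $f\,e^{s\Delta}X$ stays in the domain of the Neumann Laplacian, so that $\Delta$ and the semigroup commute when applied to it, is correct: $\mathbf{n}d(fY^\flat)=\mathbf{n}(df)\wedge\mathbf{t}Y^\flat$ when $\mathbf{n}Y=0$ and $\mathbf{n}dY=0$, and this vanishes precisely because $\mathbf{n}df=0$.
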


\begin{proof}
To show $fX\in\widehat{B}_{p,V}^{1/p}\mathfrak{X}$, we just need
to show a commutator estimate (much like in the proof of Onsager's
conjecture): 
\[
\left\{ \begin{array}{rl}
\mathcal{W}(s) & :=f^{s}X^{s}-(fX)^{s}\\
\left(\sqrt{s}\right)^{1-\frac{1}{p}}\mathcal{W}(s) & \xrightarrow[s\downarrow0]{W^{1,p}(M)}0
\end{array}\right.
\]
where $X^{s}$ is short for $e^{s\Delta}X$. Indeed, assuming this
commutator estimate holds true, $\forall r>0$:
\begin{align*}
 & \limsup_{t\downarrow0}\left(\sqrt{t}\right)^{1-\frac{1}{p}}\left\Vert e^{t\Delta}\left(fX\right)\right\Vert _{W^{1,p}\left(M_{>r}\right)}\\
\leq & \limsup_{t\downarrow0}\left(\sqrt{t}\right)^{1-\frac{1}{p}}\left\Vert f^{t}X^{t}\right\Vert _{W^{1,p}\left(M_{>r}\right)}+\underbrace{\limsup_{t\downarrow0}\left(\sqrt{t}\right)^{1-\frac{1}{p}}\left\Vert \mathcal{W}(t)\right\Vert _{W^{1,p}\left(M_{>r}\right)}}_{0}\\
\lesssim & \limsup_{t\downarrow0}\left(\sqrt{t}\right)^{1-\frac{1}{p}}\left\Vert f^{t}\right\Vert _{C^{1}\left(M\right)}\left\Vert X^{t}\right\Vert _{W^{1,p}\left(M_{>r}\right)}=0
\end{align*}
where we have used the fact that $e^{t\Delta}f\xrightarrow[t\to0]{C^{\infty}}f$,
as $f\in\mathscr{D}_{N}\left(M\right)$.

Now we prove the commutator estimate. Define $\mathcal{N}(s)=\left(\partial_{s}-\Delta\right)\mathcal{W}(s)=\left(\Delta f^{s}\right)X^{s}+f^{s}\left(\Delta X^{s}\right)-\Delta\left(f^{s}X^{s}\right)$.
By the Weitzenbock formula, we get 
\[
\mathcal{N}(s)=\left(D^{1}f^{s}\right)*\left(D^{1}X^{s}\right)
\]
 where $D^{1}$ is schematic for some differential operator of order
at most 1, with smooth coefficients (independent of $s$), and $\left(D^{1}f^{s}\right)*\left(D^{1}X^{s}\right)$
is schematic for a linear combination of similar-looking tensor terms.

On the other hand, by the Duhamel formula for semigroups (cf. \parencite[Appendix A, Proposition 9.10 \& 9.11]{taylorPartialDifferentialEquations2011a}),
for any $s>\varepsilon>0$ we get 
\[
\mathcal{W}(s)=\mathcal{W}\left(\varepsilon\right)^{s-\varepsilon}+\int_{\varepsilon}^{s}\mathcal{N}\left(\sigma\right)^{s-\sigma}\mathrm{d}\sigma
\]
It is trivial to show that $\mathcal{W}\left(\varepsilon\right)^{s-\varepsilon}\xrightharpoonup[\varepsilon\downarrow0]{L^{p}}0$.
Indeed, let $Y\in L^{p'}\mathfrak{X}\left(M\right).$ Then 
\[
\left\langle \left\langle \mathcal{W}\left(\varepsilon\right)^{s-\varepsilon},Y\right\rangle \right\rangle =\left\langle \left\langle f^{\varepsilon}X^{\varepsilon}-(fX)^{\varepsilon},Y^{s-\varepsilon}\right\rangle \right\rangle \xrightarrow{\varepsilon\downarrow0}\left\langle \left\langle fX-fX,Y^{s}\right\rangle \right\rangle =0
\]
Then $\int_{\varepsilon}^{s}\mathcal{N}\left(\sigma\right)^{s-\sigma}\mathrm{d}\sigma\xrightharpoonup[\varepsilon\downarrow0]{L^{p}}\mathcal{W}(s)$,
and by \Lemref{func_anal_lemma}, we conclude
\begin{align*}
\left\Vert \mathcal{W}(s)\right\Vert _{W^{1,p}(M)} & \leq\liminf_{\varepsilon\downarrow0}\left\Vert \int_{\varepsilon}^{s}\mathcal{N}\left(\sigma\right)^{s-\sigma}\mathrm{d}\sigma\right\Vert _{W^{1,p}(M)}\leq\int_{0}^{s}\left\Vert e^{\left(s-\sigma\right)\Delta}\left(D^{1}f^{\sigma}*D^{1}X^{\sigma}\right)\right\Vert _{W^{1,p}(M)}\mathrm{d}\sigma\\
 & \lesssim\int_{0}^{s}\left(\frac{1}{s-\sigma}\right)^{\frac{1}{2}}\left\Vert D^{1}f^{\sigma}*D^{1}X^{\sigma}\right\Vert _{L^{p}(M)}\mathrm{d}\sigma\lesssim_{f}\int_{0}^{s}\left(\frac{1}{s-\sigma}\right)^{\frac{1}{2}}\left\Vert X^{\sigma}\right\Vert _{W^{1,p}(M)}\mathrm{d}\sigma\\
 & \lesssim\left\Vert X\right\Vert _{L^{p}(M)}\int_{0}^{s}\left(\frac{1}{s-\sigma}\right)^{\frac{1}{2}}\left(\frac{1}{\sigma}\right)^{\frac{1}{2}}\;\mathrm{d}\sigma\\
 & \stackrel{\sigma=s\tau}{=}\left\Vert X\right\Vert _{L^{p}(M)}\int_{0}^{1}\left(\frac{1}{1-\tau}\right)^{\frac{1}{2}}\left(\frac{1}{\tau}\right)^{\frac{1}{2}}\;\mathrm{d}\tau\lesssim_{p}\left\Vert X\right\Vert _{L^{p}(M)}
\end{align*}
This obviously implies $\left(\sqrt{s}\right)^{1-\frac{1}{p}}\mathcal{W}(s)\xrightarrow[s\downarrow0]{W^{1,p}(M)}0$.
\end{proof}
\begin{rem*}
By repeating the proof, with necessary modifications, for any $f\in\mathscr{D}_{N}\left(M\right)$,
and $\mathcal{X}\in L_{t}^{p}\widehat{B}_{p,V}^{1/p}\mathfrak{X}\left(M\right)$
(as in \Defref{besov_v_space}), we have: 
\[
f\mathcal{X}\in L_{t}^{p}\widehat{B}_{p,V}^{1/p}\mathfrak{X}
\]
\end{rem*}

\subsection{On flat backgrounds}
\begin{rem}
When $M$ is a bounded domain in $\mathbb{R}^{n}$, the third condition
in \Corref{equiv_cN} takes on a more familiar form. Indeed, let $\phi\in C_{c}^{\infty}\left(\mathbb{R}^{n}\right)$
with $\int\phi=1$ and $\phi_{\varepsilon}=\frac{1}{\varepsilon^{n}}\phi\left(\frac{\cdot}{\varepsilon}\right)$.
Then we have the analogy
\[
P_{\leq\frac{1}{\sqrt{t}}}f=e^{t\Delta}f\approx\phi_{\sqrt{t}}*f
\]
This means 
\begin{equation}
N^{\frac{1}{p}}\left\Vert P_{>N}X\right\Vert _{L^{p}\left(M_{>r}\right)}\xrightarrow{N\to\infty}0\label{eq:P_>N_eqn}
\end{equation}
 is analogous to 
\begin{equation}
\frac{1}{\varepsilon^{1/p}}\left\Vert X-\phi_{\varepsilon}*X\right\Vert _{L^{p}\left(M_{>r}\right)}\xrightarrow{\varepsilon\to0}0\label{eq:convolution_eqn}
\end{equation}
\end{rem}

\begin{defn}
As in \parencite{bardosOnsagerConjectureBounded2019,nguyenEnergyConservationInhomogeneous2020},
for $p\in\left(1,\infty\right)$, we say $X\in\underline{B}_{p,\text{VMO}}^{1/p}\mathfrak{X}\left(M\right)$
if $X\in L^{p}\mathfrak{X}\left(M\right)$ and $\forall r>0:$
\begin{equation}
A_{r}\left(\varepsilon\right):=\frac{1}{\varepsilon^{1/p}}\left\Vert \left\Vert X(x-\varepsilon h)-X(x)\right\Vert _{L_{|h|\leq1}^{p}}\right\Vert _{L_{x}^{p}\left(M_{>r}\right)}\xrightarrow{\varepsilon\downarrow0}0\label{eq:vmo_eqn}
\end{equation}
Similarly, we say $\mathcal{X}\in L_{t}^{p}\underline{B}_{p,\text{VMO}}^{1/p}\mathfrak{X}\left(M\right)$
if $X\in L_{t}^{p}L^{p}\mathfrak{X}\left(M\right)$ and $\forall r>0:$
\begin{equation}
A_{r}\left(\varepsilon\right):=\frac{1}{\varepsilon^{1/p}}\left\Vert \left\Vert \mathcal{X}(t,x-\varepsilon h)-\mathcal{X}(t,x)\right\Vert _{L_{|h|\leq1}^{p}}\right\Vert _{L_{t}^{p}L_{x}^{p}\left(M_{>r}\right)}\xrightarrow{\varepsilon\downarrow0}0\label{eq:vmo_eqn-1}
\end{equation}
\end{defn}

\begin{rem}
In (\ref{eq:vmo_eqn}), note that $A_{r}\left(\varepsilon\right)$
is continuous for $\varepsilon\in[0,r)$. Define 
\[
\widetilde{A_{r}}(\varepsilon):=\frac{1}{\varepsilon^{1/p}}\left\Vert \left\Vert \mathbf{1}_{M_{>r}}\left(x-\varepsilon h\right)\left(X(x-\varepsilon h)-X(x)\right)\right\Vert _{L_{|h|\leq1}^{p}}\right\Vert _{L_{x}^{p}\left(M_{>r}\right)}
\]
 for $\varepsilon\in\left(0,1\right]$ (well-defined). Then $\widetilde{A_{r}}\left(\varepsilon\right)$
is also continuous in $\varepsilon$, with $\widetilde{A_{r}}\left(\varepsilon\right)\leq A_{r}\left(\varepsilon\right)\;\forall\varepsilon\in(0,r)$
and $\widetilde{A_{r}}\left(\varepsilon\right)\lesssim_{r,p}\left\Vert X\right\Vert _{L^{p}\left(M\right)}\;\forall\varepsilon\in[\frac{r}{2},1]$.
By \parencite[Subsection 5.2]{huynhHodgetheoreticAnalysisManifolds2019},
we conclude 
\[
\left\Vert X\right\Vert _{B_{p,\infty}^{1,p}\left(M_{>r}\right)}\sim\left\Vert X\right\Vert _{L^{p}\left(M_{>r}\right)}+\left\Vert \widetilde{A_{r}}\left(\varepsilon\right)\right\Vert _{L_{\varepsilon}^{\infty}\left(\left(0,1\right)\right)}\lesssim_{r,p}\left\Vert X\right\Vert _{L^{p}\left(M\right)}+\left\Vert A_{r}\left(\varepsilon\right)\right\Vert _{L_{\varepsilon}^{\infty}\left([0,r/2]\right)}
\]

From this we conclude $\underline{B}_{p,\text{VMO}}^{1/p}\hookrightarrow B_{p,\infty,\mathrm{loc}}^{1/p}$
and $L_{t}^{p}\underline{B}_{p,\text{VMO}}^{1/p}\hookrightarrow L_{t}^{p}B_{p,\infty,\mathrm{loc}}^{1/p}$
where 
\[
B_{p,\infty,\mathrm{loc}}^{1/p}\left(M\right):=L^{p}\left(M\right)\cap\left(\bigcap_{r>0}B_{p,\infty}^{1/p}\left(M_{>r}\right)\right)
\]
and $L_{t}^{p}B_{p,\infty,\mathrm{loc}}^{1/p}\left(M\right):=L_{t}^{p}L^{p}\left(M\right)\cap\left(\bigcap_{r>0}L_{t}^{p}B_{p,\infty}^{1/p}\left(M_{>r}\right)\right)$.

We observe that (\ref{eq:vmo_eqn}) trivially implies (\ref{eq:convolution_eqn}).
To relate (\ref{eq:vmo_eqn}) to (\ref{eq:P_>N_eqn}), we now borrow
some results from the construction of the heat kernel (to be proven
in \Secref{Construction-of-heat-kernel}).
\end{rem}

\begin{fact}
Fix $r>0$. Let $K(t,x,y)$ be the Hodge-Neumann heat kernel as constructed
in \Secref{Construction-of-heat-kernel}.

For $r'>0,$ let $E_{r'}=\left\{ \left(x,y\right)\in M\times M:d\left(x,y\right)\geq r'\right\} $.
Then $E_{r'}$ is compact, and by the locally uniform off-diagonal
decay of the heat kernel, we conclude 
\begin{equation}
\forall x,y\in E_{r'},\forall t\leq1:\left|K(t,x,y)\right|=O_{r',\neg x,\neg y}\left(t^{\infty}\right)\label{eq:off-decay-r}
\end{equation}

Now let $F_{r,r'}=\left\{ \left(x,y\right)\in M_{\leq r}\times M:d\left(x,y\right)\leq r'\right\} $.
Then $F_{r,r'}$ is compact. By interior blow-up, there is $r'=r'\left(r\right)\in\left(0,\frac{r}{4}\right)$
such that 
\begin{equation}
\forall x,y\in F_{r,r'},\forall t\leq1:\left|K(t,x,y)\right|=O_{r,\neg x,\neg y}\left(\frac{1}{t^{n/2}}\left\langle \frac{x-y}{\sqrt{t}}\right\rangle ^{-\infty}\right)\label{eq:int-blow-r}
\end{equation}
\end{fact}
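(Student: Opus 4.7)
The plan is to repackage results that the construction in \Secref{Construction-of-heat-kernel} already supplies, upgrading locally uniform statements into uniform bounds on compact sets.

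First, $M\times M$ is compact, and both $E_{r'}$ and $F_{r,r'}$ are closed in $M\times M$ (since $d$ is continuous), hence compact. For \eqref{eq:off-decay-r}, invoke \Factref{off-diagonal-decay}: for every $(x_0,y_0)$ with $x_0\neq y_0$, one has $K(t,x,y)=O(t^\infty)$ as $t\downarrow 0$, locally uniformly near $(x_0,y_0)$. Since $E_{r'}\subset\{x\neq y\}$ is compact, cover it by finitely many open neighborhoods on each of which the decay is uniform with a constant depending only on the neighborhood, and take the maximum of the finitely many resulting constants to obtain the global $O_{r'}(t^\infty)$ estimate.

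For \eqref{eq:int-blow-r}, appeal to the interior-blow-up description in \Secref{Construction-of-heat-kernel}. Near the temporal front face obtained by blowing up $\{t=0,\,x=y\}$, the construction realizes $K$ as a polyhomogeneous conormal distribution whose leading behavior in local coordinates is Gaussian:
\[
K(t,x,y)=\frac{1}{t^{n/2}}\,\Phi\!\left(x,\frac{x-y}{\sqrt{t}},t\right),
\]
with $\Phi$ Schwartz in its middle slot, locally uniformly in the other variables. I would then choose $r'(r)\in(0,r/4)$ small enough that every pair $(x,y)\in F_{r,r'}$ lies inside a single interior chart on which this description is valid, and use the compactness of $F_{r,r'}$ to cover it by finitely many such charts, taking the max of the finitely many Schwartz bounds to obtain the uniform estimate.

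The main obstacle is choosing $r'(r)$ uniformly when $x\in M_{\leq r}$ approaches $\partial M$. One must ensure that $(x,y)$ stays away from the boundary faces of the heat-space blow-up, on which $K$ has a different profile adapted to the absolute Neumann condition; this is precisely why $r'$ must depend on $r$ rather than being absolute, and it is the step where the compatibility between the interior and boundary blow-ups built into the construction is invoked.
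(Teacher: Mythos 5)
Your first two paragraphs correctly execute the intended argument: both sets are closed subsets of the compact $M\times M$, hence compact, and the locally uniform statements supplied by the construction (\Factref{off-diagonal-decay} for (\ref{eq:off-decay-r}), the interior blow-up description in \Secref{Construction-of-heat-kernel} for (\ref{eq:int-blow-r})) are upgraded to uniform bounds on those compact sets by a finite cover, taking the maximum of finitely many constants. That is the same approach the paper uses.

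Your third paragraph, however, flags a genuine problem without resolving it. As written, $F_{r,r'}=\{(x,y)\in M_{\le r}\times M : d(x,y)\le r'\}$ contains pairs with $x\in\partial M$, and no choice of $r'$ can avoid this: shrinking $r'$ controls the distance $d(x,y)$ to the diagonal, not the distance of $x$ to $\partial M$. For such $(x,y)$ the interior blow-up description literally does not apply — it is stated only for $x\in\mathrm{int}(M)$ — so the gesture at ``compatibility between the interior and boundary blow-ups'' is not an argument. One would instead have to go through the expansion at the ff face, where the leading term involves the reflected Gaussian in $\xi_n+\eta_n=(x_n+y_n)/\sqrt t$ adapted to the absolute Neumann condition, and verify separately that it still yields the bound $t^{-n/2}\langle (x-y)/\sqrt t\rangle^{-\infty}$. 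You did not carry this out, so your proof does not actually establish the statement for $M_{\le r}$.

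The most likely resolution is that $M_{\le r}$ in the Fact is a typo for $M_{\ge r}$. The only place (\ref{eq:int-blow-r}) is invoked, in the proof of \Thmref{contain_VMO}, is for $x\in M_{>r}$ with $d(x,y)\le r'$; the condition $r'<r/4$ then forces $y\in M_{\ge 3r/4}$, so $(x,y)$ ranges over a compact subset of $\accentset{\circ}{M}\times\accentset{\circ}{M}$ on which the interior projective-coordinate description of $\widetilde H$ is uniformly valid, consistent with the paper's phrase ``By interior blow-up.'' Under that correction your first two paragraphs already constitute a complete proof, and the third paragraph should be replaced by the observation that $r'=r'(r)$ is needed only to keep $y$ bounded away from $\partial M$ once $x$ is — not to manage the kernel near the boundary.
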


\begin{thm}
\label{thm:contain_VMO}Let $M$ be a bounded $C^{\infty}$-domain
in $\mathbb{R}^{n}$, $p\in\left(1,\infty\right)$ and $X\in L^{p}\mathfrak{X}\left(M\right)$.
Then
\begin{equation}
\forall r>0:A_{r}\left(\varepsilon\right):=\frac{1}{\varepsilon^{1/p}}\left\Vert \left\Vert X(x-\varepsilon h)-X(x)\right\Vert _{L_{|h|\leq1}^{p}}\right\Vert _{L_{x}^{p}\left(M_{>r}\right)}\xrightarrow{\varepsilon\downarrow0}0\label{eq:titi_vmo_ref}
\end{equation}
is equivalent to
\begin{equation}
\forall r>0:N^{\frac{1}{p}}\left\Vert P_{>N}X\right\Vert _{L^{p}\left(M_{>r}\right)}\xrightarrow{N\to\infty}0\label{eq:mine}
\end{equation}
\end{thm}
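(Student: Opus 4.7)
I will introduce the common intermediate quantity
\[
\eta_t(X, r) := t^{-1/(2p)} \bigl\|(I - e^{t\Delta}) X\bigr\|_{L^p(M_{>r})}
\]
and show that both (\ref{eq:titi_vmo_ref}) and (\ref{eq:mine}) are equivalent, for every $r > 0$, to $\lim_{t \downarrow 0} \eta_t(X, r) = 0$. The equivalence (Step 1) between $\eta_t \to 0$ and (\ref{eq:mine}) essentially repeats \Corref{equiv_cN}: the FTC identity $(I - e^{t\Delta})X = -\int_0^t \Delta e^{s\Delta} X\,ds$ combined with \Thmref{local_bernstein1} gives one direction, while the dyadic decomposition $P_{>N}X = \sum_{K > N} P_K X$ together with \Corref{local_bernstein2} gives the other.

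Step 2 (forward), $(\ref{eq:titi_vmo_ref}) \Rightarrow \eta_t(X, r) \to 0$: I use the kernel representation. For $x \in M_{>r}$ split $M = B(x, r/2) \cup (M \setminus B(x, r/2))$. The integral over the far set contributes $O(t^\infty)\|X\|_{L^p}$ by the off-diagonal decay (\ref{eq:off-decay-r}). On the near set, (\ref{eq:int-blow-r}) supplies $|K(t,x,y)| \lesssim_r t^{-n/2} \langle (x-y)/\sqrt{t}\rangle^{-M}$ for arbitrarily large $M$; using the normalization $\int_M K(t,x,y)\,dy = \mathrm{Id} + O(t^\infty)$ for interior $x$ (a consequence of the explicit construction in \Secref{Construction-of-heat-kernel}, with boundary corrections killed by off-diagonal decay), subtracting $X(x)$ inside the integral and substituting $y = x - \sqrt{t}h$ yields
\[
|(I - e^{t\Delta})X(x)| \lesssim \int_{\mathbb{R}^n} \langle h\rangle^{-M} |X(x - \sqrt{t}h) - X(x)|\,\mathbf{1}_M(x - \sqrt{t}h)\,dh + O(t^\infty)\|X\|_{L^p}.
\]
Taking $L^p(M_{>r})$ norm, dyadically splitting at scales $|h| \sim 2^j$, and using the subadditivity $\|X(\cdot - k\varepsilon h_0) - X\|_{L^p(M_{>r/2})} \leq k\|X(\cdot - \varepsilon h_0) - X\|_{L^p(M_{>r/2})}$ (valid while $k\varepsilon \ll r$), the geometric series in $j$ converges once $M > n+1$; Jensen in $h$ then recovers the translation modulus in the form $A_{r/2}(\sqrt{t})$, giving $\eta_t(X, r) \lesssim A_{r/2}(\sqrt{t}) + O(t^\infty)\|X\|_{L^p}$.

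Step 2 (reverse), $\eta_t \to 0 \Rightarrow A_r(\varepsilon) \to 0$: pick $N = 1/\varepsilon$ and decompose
\[
X(\cdot - \varepsilon h) - X = \bigl[(I - P_{\leq N})X\bigr](\cdot - \varepsilon h) - (I - P_{\leq N})X + \bigl[P_{\leq N}X(\cdot - \varepsilon h) - P_{\leq N}X\bigr].
\]
For $|h| \leq 1$ the first two rough pieces are each controlled by $\|(I - P_{\leq N})X\|_{L^p(M_{>r/2})} = \varepsilon^{1/p}\,\eta_{\varepsilon^2}(X, r/2)$. For the smooth piece, the mean-value theorem gives $\varepsilon|h|\,\|\nabla P_{\leq N}X\|_{L^p(M_{>r/2})}$; the critical refinement is to avoid the crude global Bernstein bound $N\|X\|_{L^p}$ and instead decompose dyadically via \Thmref{local_bernstein1}:
\[
\|\nabla P_{\leq N}X\|_{L^p(M_{>r/2})} \lesssim \|X\|_{L^p} + \sum_{1 < K \leq N,\,K \in 2^{\mathbb{Z}}} K \|P_K X\|_{L^p(M_{>r/4})}.
\]
From Step 1 we know $f(K) := K^{1/p}\|P_{>K}X\|_{L^p(M_{>r/4})} \to 0$, so $\|P_K X\|_{L^p(M_{>r/4})} \lesssim K^{-1/p} f(K/2)$. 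Splitting the dyadic sum at an arbitrary cutoff $K = K_0$ and taking first $\varepsilon \to 0$ (to kill the $K \leq K_0$ portion) then $K_0 \to \infty$ (to drive the tail sup of $f$ to zero) yields $\varepsilon^{1-1/p}\|\nabla P_{\leq N}X\|_{L^p(M_{>r/2})} = o(1)$; combining with the rough pieces gives $A_r(\varepsilon) \to 0$.

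\textbf{Main obstacle.} The delicate point is the reverse direction of Step 2, where a naive application of the gradient Bernstein bound on $P_{\leq N}X$ would force $A_r(\varepsilon) = O(1)$ rather than $o(1)$. The fix is to feed the spectral vanishing from Step 1 back into a dyadic decomposition of $\nabla P_{\leq N}$, so that the high-frequency blocks inherit the vanishing factor $f(K)$ through \Thmref{local_bernstein1}. A secondary technicality is the normalization $\int K(t,x,y)\,dy = \mathrm{Id} + O(t^\infty)$ for the matrix-valued Hodge-Neumann kernel at interior points, which we quote from the construction in \Secref{Construction-of-heat-kernel}.
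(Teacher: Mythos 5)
Your proposal is correct and follows essentially the same route as the paper. Both directions mirror the paper's argument: the forward direction uses the kernel representation with off-diagonal decay and interior blow-up, a near/far split, and a dyadic decomposition in the rescaled variable; the reverse direction decomposes $X$ into $P_{\leq N}X + P_{>N}X$ with $N = 1/\varepsilon$, bounds the smooth piece by the mean-value theorem and a local gradient Bernstein estimate, and bounds the rough piece by the hypothesis. Two small remarks. First, your "Step 1" is a tautology: with $N = t^{-1/2}$ one has $P_{>N}X = (I - e^{t\Delta})X$ and $N^{1/p} = t^{-1/(2p)}$, so $\eta_t(X,r) \to 0$ \emph{is} condition (\ref{eq:mine}) rewritten, not an intermediate condition requiring \Corref{equiv_cN}. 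Second, in the reverse direction, your "critical refinement" — replacing the crude $N\|X\|_{L^p}$ bound on $\nabla P_{\leq N}X$ by a dyadic sum of blocks that inherit the vanishing factor — is precisely the content of the implication $(3)\Rightarrow(1)$ in \Corref{equiv_cN}, which the paper simply cites rather than re-deriving inline; your unpacking of it is sound (modulo the minor bookkeeping that the subadditivity $\|X(\cdot - k\varepsilon h_0) - X\|_{L^p(M_{>r/2})} \le k\|X(\cdot-\varepsilon h_0)-X\|_{L^p(\,\cdot\,)}$ enlarges the domain on the right, say to $M_{>r/4}$). In the forward direction you reduce all dyadic scales to the single modulus $A_{r/4}(\sqrt t)$ via telescoping, where the paper keeps the scales $A_r(\sqrt t N)$ and invokes dominated convergence; both are fine, yours gives a marginally cleaner bound. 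Finally, the normalization $\int_M K(t,x,y)\,dy = \mathrm{Id}+O(t^\infty)$ that you quote for interior $x$ is stronger than what one typically extracts from the construction; a weaker $\mathrm{Id}+O(t)$ already suffices, since $t^{-1/(2p)}\cdot O(t)\|X\|_{L^p} \to 0$.
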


\begin{rem*}
The proof actually shows for $N\geq1:$ 
\[
N^{\frac{1}{p}}\left\Vert P_{>N}X\right\Vert _{L^{p}\left(M_{>r}\right)}=O_{r}\left(\left\Vert A_{r}\right\Vert _{L^{\infty}\left([0,\frac{r}{2}]\right)}+\frac{\left\Vert X\right\Vert _{L^{p}\left(M\right)}}{N^{\infty}}\right)
\]
\end{rem*}
\begin{proof}
We first show (\ref{eq:titi_vmo_ref}) implies (\ref{eq:mine}). Fix
$r>0$. Let $r'=r'\left(r\right)\in\left(0,\frac{r}{4}\right)$ as
in (\ref{eq:int-blow-r}). By (\ref{eq:off-decay-r}), we can disregard
the region $\left\{ d(x,y)>r'\right\} $, and just need to show 
\[
\left(\frac{1}{\sqrt{t}}\right)^{\frac{1}{p}}\left\Vert \int_{d(y,x)\leq r'}\text{d}y\;K(t,x,y)\left(X(y)-X(x)\right)\right\Vert _{L_{x}^{p}\left(M_{>r}\right)}\xrightarrow{t\to0}0
\]
But by (\ref{eq:int-blow-r}), the left-hand side is bounded by 
\begin{align}
 & \left(\frac{1}{\sqrt{t}}\right)^{\frac{1}{p}}\left\Vert \left\Vert O_{r}\left(\frac{1}{t^{n/2}}\left\langle \frac{x-y}{\sqrt{t}}\right\rangle ^{-\infty}\right)\left|X(y)-X(x)\right|\right\Vert _{L_{y}^{1}\left(B_{r'}\left(x\right)\right)}\right\Vert _{L_{x}^{p}\left(M_{>r}\right)}\nonumber \\
\lesssim_{r} & \left(\frac{1}{\sqrt{t}}\right)^{\frac{1}{p}}\left\Vert \left\Vert \left\langle \zeta\right\rangle ^{-\infty}\left|X(x-\sqrt{t}\zeta)-X(x)\right|\right\Vert _{L_{\left|\zeta\right|\leq\frac{r'}{\sqrt{t}}}^{1}}\right\Vert _{L_{x}^{p}\left(M_{>r}\right)}\label{eq:b4_dyadic-rings}
\end{align}
where we made the change of variable $\zeta=\frac{x-y}{\sqrt{t}}$.
By (\ref{eq:titi_vmo_ref}) and Holder's inequality, we can disregard
the region $\left\{ \left|\zeta\right|\leq1\right\} $. Then we split
$1<\left|\zeta\right|\leq\frac{r'}{\sqrt{t}}$ into dyadic rings:
\begin{align*}
\left(\ref{eq:b4_dyadic-rings}\right) & \lesssim\left(\frac{1}{\sqrt{t}}\right)^{\frac{1}{p}}\sum_{\substack{N\in2^{\mathbb{N}_{0}},N\lesssim\frac{r'}{\sqrt{t}}}
}\frac{1}{N^{\infty}}\left\Vert \left\Vert X(x-\sqrt{t}\zeta)-X(x)\right\Vert _{L_{\left|\zeta\right|\sim N}^{1}}\right\Vert _{L_{x}^{p}\left(M_{>r}\right)}\\
 & \lesssim\left(\frac{1}{\sqrt{t}}\right)^{\frac{1}{p}}\sum_{\substack{N\in2^{\mathbb{N}_{0}},N\lesssim\frac{r'}{\sqrt{t}}}
}\frac{1}{N^{\infty}}\left\Vert \left\Vert X(x-\sqrt{t}\zeta)-X(x)\right\Vert _{L_{\left|\zeta\right|\sim N}^{p}}\right\Vert _{L_{x}^{p}\left(M_{>r}\right)}\\
 & \lesssim\left(\frac{1}{\sqrt{t}}\right)^{\frac{n+1}{p}}\sum_{\substack{N\in2^{\mathbb{N}_{0}},N\lesssim\frac{r'}{\sqrt{t}}}
}\frac{1}{N^{\infty}}\left\Vert \left\Vert X(x-\tau)-X(x)\right\Vert _{L_{\left|\tau\right|\sim\sqrt{t}N}^{p}}\right\Vert _{L_{x}^{p}\left(M_{>r}\right)}
\end{align*}
where we made the change of variable $\tau=\sqrt{t}\zeta$. Now observe
that (\ref{eq:titi_vmo_ref}) implies that for $\varepsilon\leq r/2$:
\[
\left\Vert \left\Vert X\left(x-\tau\right)-X(x)\right\Vert _{L_{|\tau|\leq\varepsilon}^{p}}\right\Vert _{L_{x}^{p}\left(M_{>r}\right)}=\varepsilon^{\frac{n+1}{p}}A_{r}\left(\varepsilon\right)
\]
where $0\leq A_{r}(\varepsilon)\leq\left\Vert A_{r}\right\Vert _{L^{\infty}\left([0,r/2]\right)}$
and $A_{r}\left(\varepsilon\right)\xrightarrow{\varepsilon\downarrow0}0$.
Then 
\begin{align*}
\left(\ref{eq:b4_dyadic-rings}\right) & \lesssim\left(\frac{1}{\sqrt{t}}\right)^{\frac{n+1}{p}}\sum_{\substack{N\in2^{\mathbb{N}_{0}},N\lesssim\frac{r'}{\sqrt{t}}}
}\frac{1}{N^{\infty}}\left(\sqrt{t}N\right)^{\frac{n+1}{p}}A_{r}\left(\sqrt{t}N\right)\\
 & \lesssim\sum_{\substack{N\in2^{\mathbb{N}_{0}},N\lesssim\frac{r'}{\sqrt{t}}}
}\frac{1}{N^{\infty}}A_{r}\left(\sqrt{t}N\right)\\
 & \xrightarrow[\text{DCT}]{t\downarrow0}0
\end{align*}
Now we show (\ref{eq:mine}) implies (\ref{eq:titi_vmo_ref}). Observe
that by \Corref{equiv_cN}, (\ref{eq:mine}) is equivalent to 
\[
N^{\frac{1}{p}-1}\left\Vert P_{\leq N}X\right\Vert _{W^{1,p}\left(M_{>r}\right)}\xrightarrow{N\to\infty}0\;\forall r>0
\]
Now fix $r>0$. Then for $\varepsilon\in\left(0,\min\left(1,\frac{r}{2}\right)\right)$,
define $N=\frac{1}{\varepsilon}$, and we have: 
\begin{align*}
A_{r}\left(\varepsilon\right) & \leq N^{\frac{1}{p}}\left\Vert \left\Vert P_{\leq N}X\left(x-\frac{1}{N}h\right)-P_{\leq N}X\left(x\right)\right\Vert _{L_{|h|\leq1}^{p}}\right\Vert _{L_{x}^{p}\left(M_{>r}\right)}+N^{\frac{1}{p}}\left\Vert \left\Vert P_{>N}X\left(x-\frac{1}{N}h\right)\right\Vert _{L_{|h|\leq1}^{p}}\right\Vert _{L_{x}^{p}\left(M_{>r}\right)}\\
 & \;\;\;\;\;+N^{\frac{1}{p}}\left\Vert \left\Vert P_{>N}X\left(x\right)\right\Vert _{L_{|h|\leq1}^{p}}\right\Vert _{L_{x}^{p}\left(M_{>r}\right)}\\
 & \lesssim N^{\frac{1}{p}-1}\left\Vert \left\Vert \left\Vert \nabla P_{\leq N}X\left(x-\tau\frac{1}{N}h\right)\right\Vert _{L_{\tau}^{1}\left(\left[0,1\right]\right)}\right\Vert _{L_{|h|\leq1}^{p}}\right\Vert _{L_{x}^{p}\left(M_{>r}\right)}+N^{\frac{1}{p}}\left\Vert P_{>N}X\right\Vert _{L^{p}\left(M_{>\frac{r}{2}}\right)}\\
 & \lesssim N^{\frac{1}{p}-1}\left\Vert P_{\leq N}X\right\Vert _{W^{1,p}\left(M_{>\frac{r}{2}}\right)}+N^{\frac{1}{p}}\left\Vert P_{>N}X\right\Vert _{L^{p}\left(M_{>\frac{r}{2}}\right)}\xrightarrow{\varepsilon\to0}0
\end{align*}
Note that we used Minkowski's inequality, in passing to the last line.
\end{proof}

\section{Construction of the heat kernel\label{sec:Construction-of-heat-kernel}}

Recall the Japanese bracket notation $\left\langle a\right\rangle =\sqrt{1+|a|^{2}}\sim1+|a|$.
We also write $a=O\left(b^{\infty}\right)$ or $|a|\lesssim b^{\infty}$
to mean $|a|\lesssim_{l}b^{l}\;\forall l\in\mathbb{N}$.

Let $\left(M,g\right)$ be a compact Riemannian $n$-manifold with
boundary. A differential $k$-form is a member of $C^{\infty}\left(M;\Lambda^{k}M\right)$.

In this section, unless otherwise noted, we write $\Delta$ for the
Hodge Laplacian on forms. We also let $\left(t,x,y\right)$ be the
standard local coordinates for $[0,\infty)\times M\times M$. When
$x$ or $y$ is near the boundary, we can stipulate that $x_{n}$
and $y_{n}$ stand for the Riemannian distance to the boundary (\textbf{geodesic
normal coordinates}). 

We aim to construct a unique Hodge-Neumann heat kernel with the absolute
Neumann boundary condition. In particular, define $\mathrm{END}\left(\Lambda^{k}M\right)=\mathrm{Hom}\left(\pi_{2}^{*}\Lambda^{k}M,\pi_{1}^{*}\Lambda^{k}M\right)$,
where $\pi_{i}$ is the projection from $\left(0,\infty\right)\times M\times M$
onto the $i$-th $M$. We want
\[
K\in C_{\mathrm{loc}}^{\infty}\left(\left(0,\infty\right)\times M\times M;\mathrm{END}\left(\Lambda^{k}M\right)\right)
\]
such that 
\begin{align*}
\left(\partial_{t}-\Delta_{x}\right)K\left(t,x,y\right) & =0\\
\mathbf{n}_{x}K\left(t,x,y\right) & =0 &  & \text{for }x\in\partial M\\
\mathbf{n}_{x}d_{x}K\left(t,x,y\right) & =0 &  & \text{for }x\in\partial M\\
\lim_{t\downarrow0}K\left(t,x,y\right) & =\delta_{y}\left(x\right)
\end{align*}
where the last condition means $\forall u\in\mathscr{D}\left(M;\Lambda^{k}M\right),\int K\left(t,x,y\right)u(y)\;\mathrm{d}y\xrightarrow{t\downarrow0}u(x)$.

During the construction, we will be able to prove certain properties
of the kernel, such as off-diagonal decay for all derivatives. 

The construction of the heat kernel comes from \parencite{mazzeoAnalyticTorsionManifolds2013},
and we simply discuss the modifications required for our case, to
handle the Hodge-Neumann Laplacian on a smooth manifold with smooth
boundary.\footnote{The author thanks Daniel Grieser, András Vasy and Rafe Mazzeo for
discussing these ideas.

The original plan was to follow the note \parencite{grieserNOTESHEATKERNEL2004}
which is simpler and does not rely on Melrose's calculus, but we have
decided to clean up the note, modify some steps and publish it at
a later date.}

\subsubsection*{Kernel in Einstein sum notation\label{subsec:Kernel-in-Einstein}}

Let $A\in C_{\mathrm{loc}}^{\infty}\left(\left(0,\infty\right)\times M^{2};\mathrm{END}\left(\Lambda^{k}M\right)\right)$.
Let $U\subset M$ be a coordinate patch. Then, by using Einstein notation,
locally for $x,y\in U$ we have:
\[
A\left(t,x,y\right)=A_{I}\text{}^{J}\left(t,x,y\right)dx^{I}\otimes\partial_{y^{J}}
\]
where $I,J\in\mathcal{I}_{k}=\{\left(i_{1},...,i_{k}\right):i_{1}<i_{2}<...<i_{k}\}$
and $\partial_{y^{J}}$ is dual to the form $dy^{J}$. (also in Einstein
notation, we write $x^{n}$ instead of $x_{n}$)
\begin{itemize}
\item Note that we are abusing notation, as $dx^{I}$ here is a local section
of $\pi_{1}^{*}\Lambda^{k}M\twoheadrightarrow\left(0,\infty\right)\times M^{2}$,
defined by pulling back the actual form $dx^{I}$ on $M$. We can
explicitly write $A_{I}\text{}^{J}\left(t,x,y\right)\left.dx^{I}\right|_{x}\otimes\left.\partial_{y^{J}}\right|_{y}$
to emphasize the pullback.
\item Observe that $d_{x}A\left(t,x,y\right)=d_{x}\left(A_{I}\text{}^{J}\left(t,x,y\right)dx^{I}\right)\otimes\partial_{y^{J}}=\partial_{x^{i}}A_{I}\text{}^{J}\left(t,x,y\right)\left(dx^{i}\wedge dx^{I}\right)\otimes\partial_{y^{J}}$.
\item If $u(y)=u_{J}(y)dy^{J}$ is a differential form on $M$, we write
$A\left(t,x,y\right)u(y)=A_{I}\text{}^{J}\left(t,x,y\right)u_{J}(y)dx^{I}$,
which is a section of $\pi_{1}^{*}\Lambda^{k}M$.
\end{itemize}
As agreed above, when $U$ touches the boundary, $\partial_{x^{n}}$
is the inwards normal direction, so for $x\in\partial M$: $\mathbf{n}_{x}A\left(t,x,y\right)=1_{n\in I}A_{I}\text{}^{J}\left(t,x,y\right)dx^{I}\otimes\partial_{y^{J}}$.
\begin{itemize}
\item If $\mathbf{n}_{x}A=0$ for all $x\in\partial M$, then 
\[
\mathbf{n}_{x}d_{x}A=1_{n\notin I}\partial_{x^{n}}A_{I}\text{}^{J}\left(t,x,y\right)\left(dx^{n}\wedge dx^{I}\right)\otimes\partial_{y^{J}}
\]
 So $\mathbf{n}_{x}d_{x}A=0\iff\partial_{x^{n}}A_{I}{}^{J}\left(t,x,y\right)=0$
whenever $x\in\partial M,n\notin I$. In other words, $\mathbf{n}_{x}A=0$
and $\mathbf{n}_{x}d_{x}A=0$ mean the normal part obeys the Dirichlet
boundary condition, while the tangential part obeys the Neumann boundary
condition. This will inspire the choice of leading terms later on.
\end{itemize}

\subsection{Heat calculus}

Let $x=\left(x',x_{n}\right)$ and $y=\left(y',y_{n}\right)$ be points
in $\mathbb{R}^{n}$. Recall:
\begin{enumerate}
\item The scalar heat kernel on $\mathbb{R}^{n}$: $K\left(t,x,y\right)=\left(\frac{1}{4\pi}\right)^{n/2}\tau^{-n}e^{-\frac{|\zeta|^{2}}{4}}$
where $\tau=\sqrt{t},\zeta=\frac{x-y}{\tau}$.
\item The Dirichlet scalar heat kernel on $\mathbb{R}^{n-1}\times[0,\infty)$:
$K\left(t,x,y\right)=\left(\frac{1}{4\pi}\right)^{n/2}\tau^{-n}e^{-\frac{|\zeta'|^{2}}{4}}\left(e^{-\frac{1}{4}|\xi_{n}-\eta_{n}|^{2}}-e^{-\frac{1}{4}|\xi_{n}+\eta_{n}|^{2}}\right)$
where $\xi_{n}=\frac{x_{n}}{\tau},\eta_{n}=\frac{y_{n}}{\tau},\zeta'=\frac{x'-y'}{\tau}$.
\item The Neumann scalar heat kernel on $\mathbb{R}^{n-1}\times[0,\infty)$:
$K\left(t,x,y\right)=\left(\frac{1}{4\pi}\right)^{n/2}\tau^{-n}e^{-\frac{|\zeta'|^{2}}{4}}\left(e^{-\frac{1}{4}|\xi_{n}-\eta_{n}|^{2}}+e^{-\frac{1}{4}|\xi_{n}+\eta_{n}|^{2}}\right)$.
\end{enumerate}
They will inspire the formulation of our boundary heat calculus, which
describes heat-type kernels on manifolds.

We assume the reader is familiar with the spaces of conormal and polyhomogeneous
distributions on a manifold with corners \parencite{grieserBasicsBCalculus2001,melroseAtiyahPatodiSingerIndexTheorem2018}.

\subsubsection{Blown-up heat space}

We first construct the blown-up heat space $M_{h}^{2}$, with the
faces lf, ff, td, tf as defined in \parencite{mazzeoAnalyticTorsionManifolds2013}
(though our case is simpler). 

We start with $[0,\infty)\times M\times M$, with faces tf (temporal
face), rf (right face), lf (left face) being defined as $\left\{ 0\right\} \times M\times M$,
$[0,\infty)\times\partial M\times M$, $[0,\infty)\times M\times\partial M$
respectively. Then we perform a parabolic blow-up \parencite[Section 7.4]{melroseAtiyahPatodiSingerIndexTheorem2018}
on the submanifold $\left\{ 0\right\} \times\partial M\times\partial M$
in the time direction $dt$, to create the face ff (front face)\footnote{We are following \parencite{mazzeoAnalyticTorsionManifolds2013} by
letting rf be defined by $x_{n}=0$. Other authors might prefer $y_{n}=0$.}. This creates an intermediate manifold that we will call $M_{1}$. 

After that, we perform another parabolic blow-up on the lift of the
submanifold $\left\{ 0\right\} \times\Delta\left(M\right)$ to $M_{1}$
(to be more precisely defined in (\ref{eq:lift})), which creates
another face td (time diagonal). This is the space $M_{h}^{2}$ we
need.
\begin{figure}[th]
\centering{}\includegraphics[width=0.4\textwidth]{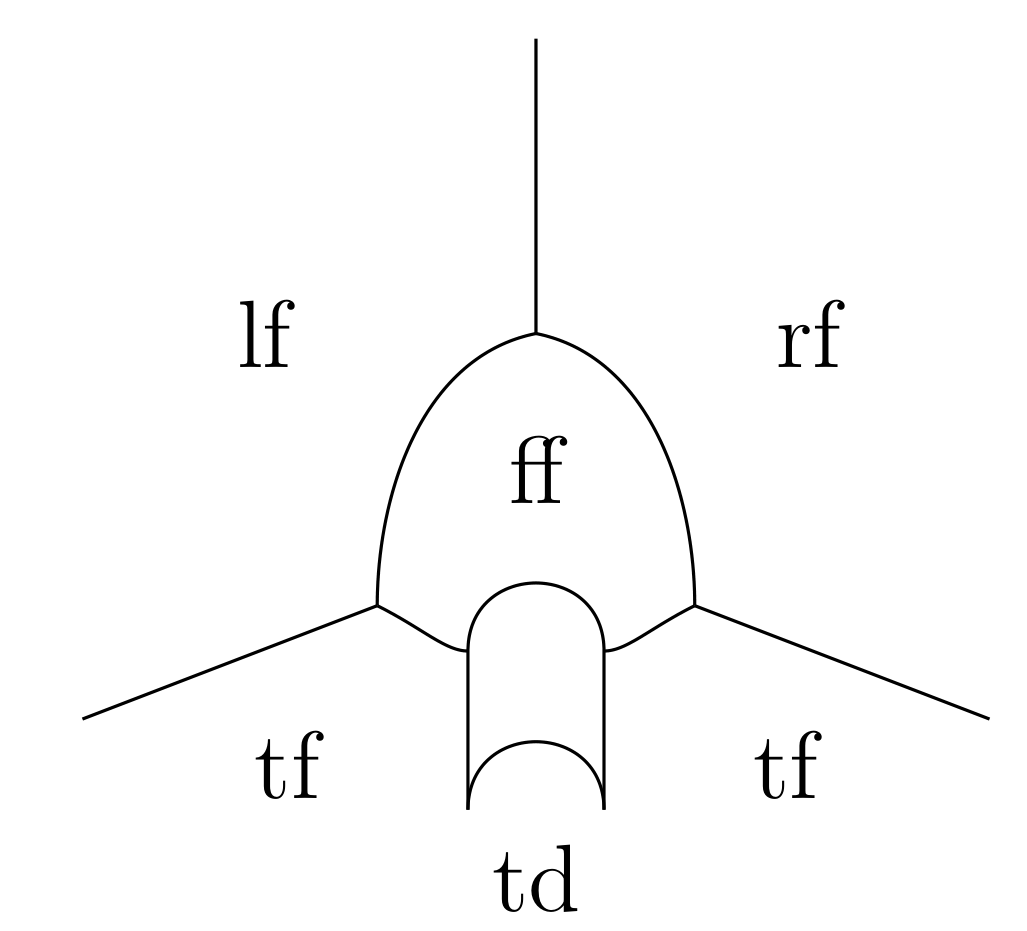}\caption{The blown-up heat space $M_{h}^{2}$}
\end{figure}

\subsubsection{Local coordinates}

By letting $\tau=\sqrt{t}$, we call $\left(\tau,x,y\right)$ the
\textbf{ts-coordinate system} (time-rescaled) for $[0,\infty)\times M\times M$.
\begin{itemize}
\item On $M_{1}$, near rf and away from lf (i.e. $y_{n}>0$), we use the\textbf{
rf-coordinate system}
\begin{equation}
T=\frac{t}{y_{n}^{2}},\theta'=\frac{x'-y'}{y_{n}},\theta_{n}=\frac{x_{n}}{y_{n}},y',y_{n}\label{eq:rf_coord}
\end{equation}
where $\theta_{n},y_{n},T$ are respectively the boundary defining
functions for rf, ff, tf. For blow-ups, it is also useful to define
the (time-rescaled) \textbf{tsrf-coordinate system }
\begin{equation}
\varsigma=\sqrt{T},\theta',\theta_{n},y',y_{n}\label{eq:tsrf_coor}
\end{equation}
We observe that as $\left(\varsigma,\theta',\theta_{n},y',y_{n}\right)\to\left(\varsigma,\theta',\theta_{n},y',0\right)$
in the tsrf-coordinate, in the ts-coordinate we have 
\[
\left(0,\left(y',0\right),\left(y',0\right)\right)+y_{n}\left(\varsigma,\left(\theta',\theta_{n}\right),\left(0,1\right)\right)\to\left(0,\left(y',0\right),\left(y',0\right)\right)
\]
The (time-rescaled) tangent vector $\left(\varsigma,\left(\theta',\theta_{n}\right),\left(0,1\right)\right)$\footnote{Explicitly, the tangent vector is $\varsigma\partial_{\tau}+\left(\theta',\theta_{n}\right)\cdot\partial_{x}+\left(0,1\right)\cdot\partial_{y}$.}
at $\left(0,\left(y',0\right),\left(y',0\right)\right)$ (modulo vectors
tangent to $\left\{ 0\right\} \times\partial M\times\partial M$,
and modulo positive scalar multiplication) corresponds to a point
on ff, which is $\left[\left(\varsigma,\left(\theta',\theta_{n}\right),\left(0,1\right)\right)\right]=\left[\left(\varsigma,\left(0,\theta_{n}\right),\left(-\theta',1\right)\right)\right]$.
This is what allows us to extend the (ts)rf-coordinate systems from
$[0,\infty)\times M\times M$ to $M_{1}$, with $\{y_{n}=0\}$ being
the face ff.
\item On $M_{1}$, near ff and away from tf, we use the \textbf{ff-coordinate
system }
\begin{equation}
\tau=\sqrt{t},x',\xi_{n}=\frac{x_{n}}{\sqrt{t}},\zeta'=\frac{x'-y'}{\sqrt{t}},\eta_{n}=\frac{y_{n}}{\sqrt{t}}\label{eq:ff_coor}
\end{equation}
where $\tau,\xi_{n},\eta_{n}$ are respectively the boundary defining
functions for ff, rf, lf. As $\left(\tau,x',\xi_{n},\zeta',\eta_{n}\right)\to\left(0,x',\xi_{n},\zeta',\eta_{n}\right)$
in the ff-coordinate, in the ts-coordinate we have
\[
\left(0,\left(x',0\right),\left(x',0\right)\right)+\tau\left(1,\left(0,\xi_{n}\right),\left(-\zeta',\eta_{n}\right)\right)\to\left(0,\left(x',0\right),\left(x',0\right)\right)
\]
The (time-rescaled) tangent vector $\left(1,\left(0,\xi_{n}\right),\left(-\zeta',\eta_{n}\right)\right)$
at $\left(0,\left(x',0\right),\left(x',0\right)\right)$ corresponds
to a point on ff, which is $\left[\left(1,\left(0,\xi_{n}\right),\left(-\zeta',\eta_{n}\right)\right)\right]$.
\item On $M_{h}^{2}$, near td, near ff, away from lf, away from tf, we
can use the rf-coordinate system from (\ref{eq:rf_coord}) to define
the \textbf{fftd-coordinate system }
\begin{equation}
\vartheta=\sqrt{T},\sigma'=\frac{\theta'}{\sqrt{T}},\sigma_{n}=\frac{\theta_{n}-1}{\sqrt{T}},y',y_{n}\label{eq:fftd_coord}
\end{equation}
where $\vartheta$ is the defining function for td. Note that as $\left(\vartheta,\sigma',\sigma_{n},y',y_{n}\right)\to\left(0,\sigma',\sigma_{n},y',y_{n}\right)$
in the fftd-coordinate, in the tsrf-coordinate we have 
\begin{equation}
\left(0,0,1,y',y_{n}\right)+\vartheta\left(1,\sigma',\sigma_{n},0,0\right)\to\left(0,0,1,y',y_{n}\right)\label{eq:lift}
\end{equation}
We observe that the points $\left(0,0,1,y',y_{n}\right)$ in the tsrf-coordinate,
are precisely the lift of the submanifold $D_{0}:=\left\{ 0\right\} \times\Delta\left(M\right)$
to $M_{1}$, which we will write as $D_{1}$. By blowing up $D_{1}$,
we create the face td and $M_{h}^{2}$. Note that $\theta_{n}=1>0$,
so \uline{td does not intersect rf} (or lf). Also, the (time-rescaled)
tangent vector $\left(1,\sigma',\sigma_{n},0,0\right)$ at $\left(0,0,1,y',y_{n}\right)$
corresponds to a point on the face td.\\
On the other hand, the point $\left(\vartheta,\sigma',\sigma_{n},y',0\right)$
in the fftd-coordinate on $M_{h}^{2}$ maps down to the point $\left(\vartheta,\vartheta\sigma',\vartheta\sigma_{n}+1,y',0\right)$
in the tsrf-coordinate on $M_{1}$ (the map being injective on $\left\{ \vartheta>0\right\} $),
which in turn corresponds to the point $\left[\left(\vartheta,\left(0,\vartheta\sigma_{n}+1\right),\left(-\vartheta\sigma',1\right)\right)\right]$
on ff.
\item The points $\left(0,0,1,y',0\right)$ in the (ts)rf-coordinate are
precisely the intersection $\mathrm{ff}\cap D_{1}$ in $M_{1}$. \\
The points $\left(0,\sigma',\sigma_{n},y',0\right)$ in the fftd-coordinate
are precisely the intersection $\mathrm{ff\cap td}$ in $M_{h}^{2}$.
\item On $M_{h}^{2}$, near td, away from ff and away from tf, we use the
\textbf{td-coordinate system}
\begin{equation}
\tau=\sqrt{t},x,\zeta=\frac{x-y}{\sqrt{t}}\label{eq:td_coor}
\end{equation}
where $\tau$ is the defining function for td. As $\left(\tau,x,\zeta\right)\to\left(0,x,\zeta\right)$
in td-coordinate, in ts-coordinate we have 
\[
\left(0,x,x\right)+\tau\left(1,0,-\zeta\right)\to\left(0,x,x\right)
\]
So we identify the point $\left(0,x,\zeta\right)$ in td-coordinate
with the (time-rescaled) tangent vector $\left(1,0,-\zeta\right)$
at $\left(0,x,x\right)\in D_{0}$, which gives a point of td (or to
be precise, away from the edges, $D_{1}$ and $D_{0}$ are locally
diffeomorphic, and td being defined as a bundle over $D_{1}$ is also
locally defined over $D_{0}$).\\
\item Wherever we have both the td-coordinate system and the fftd-coordinate
system, the point $\left(\tau,x,\zeta\right)=\left(\tau,\left(x',x_{n}\right),\left(\zeta',\zeta_{n}\right)\right)$
in the td-coordinate (with $x_{n}>0,x_{n}-\tau\zeta_{n}>0$) corresponds
to the point $\left(\frac{\tau}{x_{n}-\tau\zeta_{n}},\zeta',\zeta_{n},x'-\tau\zeta',x_{n}-\tau\zeta_{n}\right)$
in the fftd-coordinate. Conversely, $\left(\vartheta,\sigma',\sigma_{n},y',y_{n}\right)$
in the fftd-coordinate corresponds to $\left(\vartheta y_{n},\left(y'+\vartheta y_{n}\sigma',y_{n}+\vartheta y_{n}\sigma_{n}\right),\left(\sigma',\sigma_{n}\right)\right)$
in the td-coordinate. Consequently, 
\begin{equation}
\left(0,\left(x',x_{n}\right),\left(\zeta',\zeta_{n}\right)\right)\text{ in td-coordinate corresponds to }\left(0,\zeta',\zeta_{n},x',x_{n}\right)\text{ in fftd-coordinate}\label{eq:correspond}
\end{equation}
 and we identify the tangent vector $\left(1,0,-\zeta\right)$ at
$\left(0,x,x\right)\in D_{0}$ (in the ts-coordinate) with the tangent
vector $\left(1,\zeta',\zeta_{n},0,0\right)$ at $\left(0,0,1,x',x_{n}\right)\in D_{1}$
(in the tsrf-coordinate), as the same point in td.
\end{itemize}
\begin{rem}[Compatibility condition at $\mathrm{ff\cap td}$]
\label{rem:compatibility} For any smooth functions $u$ on ff and
$v$ on td, the following are equivalent:
\end{rem}

\begin{enumerate}
\item In the fftd-coordinate:
\begin{equation}
u\left(\vartheta,\sigma',\sigma_{n},y',0\right)\xrightarrow{\vartheta\to0}v\left(0,\sigma',\sigma_{n},y',0\right)\label{eq:compatibility}
\end{equation}
\item There is a smooth function $f$ on $M_{h}^{2}$ such that $N_{\mathrm{ff}}^{0}\left(f\right)=u$,
$N_{\mathrm{td}}^{0}\left(f\right)=v$.
\end{enumerate}

\subsubsection{Edge calculus}
\begin{defn}
For $\alpha,\alpha'\in-\mathbb{N}_{0}$, we define $\Psi_{\mathrm{e-h}}^{\alpha,\alpha',E_{\mathrm{lf}},E_{\mathrm{rf}}}\left(M;\Lambda^{k}M\right)$\footnote{To translate to the definition of $\Psi_{\mathrm{e-h}}^{l,p,E_{\mathrm{lf}},E_{\mathrm{rf}}}$
from \parencite[Section 3.2]{mazzeoAnalyticTorsionManifolds2013},
we can use the formulas $\alpha=-l,\alpha'=-p-2,n=m,n-1=b$.} as the space of Schwartz kernels $K$ that are pushforwards of polyhomogeneous
kernels $\widetilde{K}$ on $M_{h}^{2}$ (though we will abuse notation
and also write $K$ for $\widetilde{K}$) such that:
\begin{itemize}
\item the index sets at lf and rf are $E_{\mathrm{lf}}=\left(E_{\mathrm{lf}}^{\mathbf{t}},E_{\mathrm{lf}}^{\mathbf{n}}\right)$
and $E_{\mathrm{rf}}=\left(E_{\mathrm{rf}}^{\mathbf{t}},E_{\mathrm{rf}}^{\mathbf{n}}\right)$.
Here $E_{\mathrm{lf}}^{\mathbf{t}},E_{\mathrm{rf}}^{\mathbf{t}}$
describe the local coefficients of $\mathbf{t}_{x}K$ (the tangent
component), while $E_{\mathrm{lf}}^{\mathbf{n}},E_{\mathrm{rf}}^{\mathbf{n}}$
describe the local coefficients of $\mathbf{n}_{x}K$.
\item the index set at ff is $\text{\ensuremath{\left\{  \left(j-\left(n+2+\alpha\right),0\right):j\in\mathbb{N}_{0}\right\} } }$
(expansion in $\tau$ from (\ref{eq:ff_coor}))
\item the index set at td is $\text{\ensuremath{\left\{  \left(j-\left(n+2+\alpha'\right),0\right):j\in\mathbb{N}_{0}\right\} } }$(expansion
in $\tau$ from (\ref{eq:td_coor})). By convention, it is $\emptyset$
when $\alpha'=-\infty$.
\item the index set at tf is $\emptyset$ (off-diagonal decay).%
\end{itemize}
\end{defn}

\begin{thm}
\label{thm:heat_calc}The absolute Neumann heat kernel $H$ lies in
$\Psi_{\mathrm{e-h}}^{-2,-2,E_{\mathrm{lf}},E_{\mathrm{rf}}}\left(M;\Lambda^{k}M\right)$
where
\begin{itemize}
\item $E_{\mathrm{lf}}^{\mathbf{t}},E_{\mathrm{lf}}^{\mathbf{n}}=\mathbb{N}_{0}\times\{0\}$
(smoothness at $\mathrm{lf}$)\footnote{In fact, due to symmetry, we must have $E_{\mathrm{lf}}=E_{\mathrm{rf}}$,
but for this paper we will not need this fact.}
\item $E_{\mathrm{rf}}^{\mathbf{n}}\subset\mathbb{N}_{1}\times\{0\}$, $E_{\mathrm{rf}}^{\mathbf{t}}\subset\left(\mathbb{N}_{0}\backslash\left\{ 1\right\} \right)\times\{0\}$
(absolute Neumann boundary condition)
\end{itemize}
We can also write $\Psi_{\mathrm{e-h}}^{-2,-2,\mathbb{N}_{0},E_{\mathrm{rf}}}$
to describe smoothness at $\mathrm{lf}$.
\end{thm}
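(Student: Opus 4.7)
The plan is to construct $H$ by the method of Melrose's heat calculus on the blown-up space $M_h^2$, adapted to the absolute Neumann condition on differential forms. The construction proceeds in three stages: (i) build an initial parametrix $H_0$ with prescribed leading behavior at $\mathrm{td}$ and $\mathrm{ff}$ satisfying the boundary condition; (ii) iteratively remove residual errors using the indicial operators at each face; (iii) sum the remaining smooth error via a Volterra series. The index set at $\mathrm{tf}$ is empty by choosing the parametrix to vanish there, which enforces the off-diagonal decay and the initial condition $\lim_{t\downarrow 0} H = \delta_y(x)$.

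For step (i), at $\mathrm{td}$, in td-coordinates $(\tau,x,\zeta)$, the leading model is forced by the local Euclidean structure to be $(4\pi)^{-n/2}\tau^{-n} e^{-|\zeta|^2/4}\cdot \mathrm{Id}$, yielding the power $\tau^{-n}$ and hence order $\alpha' = -2$ at $\mathrm{td}$. At $\mathrm{ff}$, in ff-coordinates $(\tau,x',\xi_n,\zeta',\eta_n)$, I decompose each $k$-form into its tangential and normal parts relative to $\partial M$ (using the Einstein notation of the preliminary subsection) and prescribe the leading term to solve the model half-space heat problem on $\mathbb{R}^{n-1}\times[0,\infty)$ with absolute boundary condition. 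Using the method of images, the tangential components (those $I$ with $n \notin I$) use the Neumann kernel $e^{-\frac14|\xi_n-\eta_n|^2} + e^{-\frac14|\xi_n+\eta_n|^2}$, while the normal components ($n \in I$) use the Dirichlet kernel $e^{-\frac14|\xi_n-\eta_n|^2} - e^{-\frac14|\xi_n+\eta_n|^2}$. Matching at $\mathrm{ff}\cap\mathrm{td}$ is automatic from the Gaussian correspondence (\ref{eq:correspond}), yielding the compatibility (\ref{eq:compatibility}), so by the remark this extends to a polyhomogeneous kernel on $M_h^2$.

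For step (ii), the index sets at $\mathrm{rf}$ follow from reading off the Taylor expansion of $H_0$ (and subsequent corrections) in $\theta_n$. From the Einstein-notation analysis in the preliminary subsection, the condition $\mathbf{n}_x H = 0$ forces the normal component to vanish at $\theta_n=0$, giving $E_{\mathrm{rf}}^{\mathbf{n}} \subset \mathbb{N}_1 \times \{0\}$; while $\mathbf{n}_x d_x H = 0$ translates to $\partial_{x_n} H^{\mathbf{t}}\big|_{x_n=0} = 0$, killing the linear term in $\theta_n$ for the tangential part and yielding $E_{\mathrm{rf}}^{\mathbf{t}} \subset (\mathbb{N}_0 \setminus \{1\}) \times \{0\}$. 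This is precisely why the Dirichlet/Neumann split of the model kernel at $\mathrm{ff}$ is the correct choice: it has already built in the absence of the forbidden $\theta_n^1$-term for tangential parts and the vanishing at $\theta_n=0$ for normal parts. The index sets are preserved by the iterative step because $\partial_t - \Delta$ acts within the heat calculus and the Volterra correction composes kernels in a way that respects these conditions. Smoothness at $\mathrm{lf}$ follows from the fact that $y$ appears as a smooth parameter in the construction, away from $\mathrm{td}\cup\mathrm{ff}$.

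The main obstacle is step (i): one must verify that the tangential/normal split of the prescribed leading term at $\mathrm{ff}$ is consistent with the full Hodge-Laplacian (not just its scalar principal part) up to the correct order, and that the matching at $\mathrm{ff}\cap\mathrm{td}$ holds simultaneously with the boundary conditions at $\mathrm{ff}\cap\mathrm{rf}$. The off-diagonal tensorial structure of $\mathrm{END}(\Lambda^k M)$ complicates the indicial calculation at $\mathrm{ff}$ and is the essential new feature compared to the scalar case treated in \cite{grieserNOTESHEATKERNEL2004}, but it is precisely handled by the tangential/normal decomposition dictated by the form of the boundary conditions.
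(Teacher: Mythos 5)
Your overall plan matches the paper's proof of Theorem~\ref{thm:heat_calc} quite closely: build an initial parametrix with the Euclidean Gaussian model at $\mathrm{td}$ and the tangential/normal (Neumann/Dirichlet) half-space models at $\mathrm{ff}$, check compatibility at $\mathrm{ff}\cap\mathrm{td}$, iteratively solve away errors at $\mathrm{td}$ (via the elliptic model operator, inverted with the Fourier transform in $\zeta$) and at $\mathrm{rf}$ (via the algebraic indicial roots $N_{\mathrm{rf}}^{j}\left(r^{2}\left(\partial_{t}-\Delta_{x}\right)\right)=-j(j-1)$, nonzero for $j\geq 2$), and then sum the Volterra series. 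The choice of Dirichlet vs.\ Neumann models for normal vs.\ tangential components is exactly the paper's mechanism for encoding $E_{\mathrm{rf}}^{\mathbf{n}}\subset\mathbb{N}_{1}\times\{0\}$ and $E_{\mathrm{rf}}^{\mathbf{t}}\subset\left(\mathbb{N}_{0}\setminus\{1\}\right)\times\{0\}$.

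There is, however, a genuine gap in your treatment of the index set at $\mathrm{lf}$. The assertion that ``smoothness at $\mathrm{lf}$ follows from the fact that $y$ appears as a smooth parameter in the construction'' is not correct as a proof. The Volterra composition theorem used to close the series gives $P_{\mathrm{lf}}=Q'_{\mathrm{lf}}\overline{\cup}\left(Q_{\mathrm{lf}}-\beta\right)$, and the extended union $\overline{\cup}$ generically introduces logarithmic terms when the contributing index sets overlap. The iteration $E_{\mathrm{lf},N+1}=\mathbb{N}_{0}\overline{\cup}\left(E_{\mathrm{lf},N}+1\right)$ thus only yields the weaker index set $\mathscr{N}=\left\{(x,y)\in\mathbb{N}_{0}^{2}:y\leq x\right\}\supsetneq\mathbb{N}_{0}\times\{0\}$ at $\mathrm{lf}$, which permits log terms. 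To rule these out, the paper first identifies the constructed parametrix with $e^{t\widetilde{\Delta_{N}}}$ by an $L^{2}$ energy/uniqueness argument, and then invokes interior parabolic regularity (equivalently, symmetry of the heat kernel) to deduce genuine $C^{\infty}$ smoothness on $(0,\infty)\times M\times M$, which in turn forces the $\mathrm{lf}$ index set down to $\mathbb{N}_{0}\times\{0\}$. This identification step --- verifying that what you built actually is the absolute Neumann heat kernel rather than merely a polyhomogeneous solution of $(\partial_{t}-\Delta_{x})H=0$ with $H\to\delta$ --- is also needed to justify the theorem's statement and is missing from your outline.
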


\subsection{Proof of Theorem \ref{thm:heat_calc}}

We proceed exactly as in \parencite[Section 3.2]{mazzeoAnalyticTorsionManifolds2013}. 

For any $A\in\Psi_{\mathrm{e-h}}^{\alpha,\alpha',E_{\mathrm{lf}},E_{\mathrm{rf}}}\left(M;\Lambda^{k}M\right)$,
we can expand w.r.t. ff (with coordinates as in (\ref{eq:ff_coor}))
\[
A=A_{-n-2-\alpha}^{\mathrm{ff}}\left(x,\xi_{n},\zeta',\eta_{n}\right)\tau^{-n-2-\alpha}+A_{-n-2-\alpha+1}^{\mathrm{ff}}\left(x,\xi_{n},\zeta',\eta_{n}\right)\tau^{-n-2-\alpha+1}+....
\]
We write $N_{\mathrm{ff}}^{-n-2-\alpha}\left(A\right)$ for the leading
coefficient $A_{-n-2-\alpha}^{\mathrm{ff}}$. We can expand similarly
w.r.t. td and define $N_{\mathrm{td}}^{-n-2-\alpha'}\left(A\right)$.

Then we note that $t\left(\partial_{t}-\Delta_{x}\right)$ is a $b$-operator
which could be restricted to ff and td. In particular, 
\[
\left\{ \begin{array}{rl}
N_{\mathrm{ff}}^{-n-2-\alpha}\left(t\left(\partial_{t}-\Delta_{x}\right)A\right) & =N_{\mathrm{ff}}^{-n-2-\alpha}\left(t\left(\partial_{t}-\Delta_{x}\right)\right)N_{\mathrm{ff}}^{-n-2-\alpha}\left(A\right)\\
N_{\mathrm{td}}^{-n-2-\alpha'}\left(t\left(\partial_{t}-\Delta_{x}\right)A\right) & =N_{\mathrm{td}}^{-n-2-\alpha'}\left(t\left(\partial_{t}-\Delta_{x}\right)\right)N_{\mathrm{td}}^{-n-2-\alpha'}\left(A\right)
\end{array}\right.
\]

where, in the td-coordinate system from (\ref{eq:td_coor}) and the
ff-coordinate system from (\ref{eq:ff_coor}):
\begin{equation}
\left\{ \begin{array}{rl}
N_{\mathrm{td}}^{-n-2-\alpha}\left(t\left(\partial_{t}-\Delta_{x}\right)\right) & =-\Delta_{\zeta}\left(x\right)-\frac{1}{2}\zeta\cdot\partial_{\zeta}-\frac{n+2+\alpha}{2}\\
N_{\mathrm{ff}}^{-n-2-\alpha}\left(t\left(\partial_{t}-\Delta_{x}\right)\right) & =-\Delta_{\left(\zeta',\xi_{n}\right)}\left(x',0\right)-\frac{1}{2}\left(\zeta',\xi_{n},\eta_{n}\right)\cdot\partial_{\left(\zeta',\xi_{n},\eta_{n}\right)}-\frac{n+2+\alpha'}{2}
\end{array}\right.\label{eq:leading_PDE_op}
\end{equation}
Here we have written $\zeta\cdot\partial_{\zeta}=\sum_{i}\zeta_{i}\partial_{\zeta_{i}}$
and $\Delta_{\zeta}\left(x\right)=\sum_{i,j}g^{ij}(x)\partial_{\zeta_{i}}\partial_{\zeta{}_{j}}$.

Then we have $t\left(\partial_{t}-\Delta_{x}\right)\Psi_{\mathrm{e-h}}^{\alpha,\alpha',E_{\mathrm{lf}},E_{\mathrm{rf}}}\subseteq\Psi_{\mathrm{e-h}}^{\alpha,\alpha',\mathbb{N}_{0},\mathbb{N}_{0}}$.

From this point on, \uline{we fix \mbox{$E_{\mathrm{lf}},E_{\mathrm{rf}}$}
to be as in \mbox{\Thmref{heat_calc}}. }
\begin{claim}
\label{claim:1st_claim}There is an element $H^{\left(1\right)}\in\Psi_{\mathrm{e-h}}^{-2,-2,E_{\mathrm{lf}},E_{\mathrm{rf}}}\left(M;\Lambda^{k}M\right)$
such that 
\[
\left\{ \begin{array}{rl}
P^{\left(1\right)} & :=t\left(\partial_{t}-\Delta_{x}\right)H^{\left(1\right)}\in\Psi_{\mathrm{e-h}}^{-3,-\infty,\mathbb{N}_{0},\mathbb{N}_{0}}\\
\lim_{t\downarrow0}H^{\left(1\right)}\left(t,x,y\right) & =\delta_{y}\left(x\right)
\end{array}\right.
\]
\end{claim}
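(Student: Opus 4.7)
}

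The plan is to build $H^{(1)}$ in two steps, first producing a polyhomogeneous Ansatz near td with the correct initial data, then extending it across ff while enforcing the absolute Neumann boundary condition on rf, and finally checking that the result lies in $\Psi_{\mathrm{e-h}}^{-2,-2,E_{\mathrm{lf}},E_{\mathrm{rf}}}$ and that applying $t(\partial_{t}-\Delta_{x})$ kills the appropriate leading terms.

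First I would construct the td-asymptotic expansion. Fixing $\alpha'=-2$, the model operator (\ref{eq:leading_PDE_op}) at td is the harmonic-oscillator-type operator $L_{x}:=-\Delta_{\zeta}(x)-\tfrac12\zeta\cdot\partial_{\zeta}-\tfrac{n}{2}$, which is invertible on Schwartz-type data in $\zeta$ when shifted by integer powers of $t^{1/2}$. The natural leading coefficient is the Euclidean heat kernel for the metric $g(x)$, $N_{\mathrm{td}}^{-n}(H^{(1)})=(4\pi)^{-n/2}e^{-|\zeta|_{g(x)}^{2}/4}\otimes\mathrm{Id}_{\Lambda^{k}_{x}M}$, which satisfies $L_{x}N_{\mathrm{td}}^{-n}(H^{(1)})=0$ and reproduces $\delta_{y}(x)$ as $\tau\downarrow0$. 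Higher-order coefficients $N_{\mathrm{td}}^{-n+j}(H^{(1)})$ are obtained inductively by solving $(L_{x}-\tfrac{j}{2})N_{\mathrm{td}}^{-n+j}(H^{(1)})=R_{j}$, where $R_{j}$ is determined by the lower coefficients through the Taylor expansion of $t(\partial_{t}-\Delta_{x})$ about the diagonal; at each step one uses the fact that $L_{x}-\tfrac{j}{2}$ is a bijection on Schwartz-in-$\zeta$ forms depending smoothly on $x$ (so the td-expansion has trivial index set $\mathbb{N}_{0}\times\{0\}$ of integer powers). Borel summation in the boundary defining function $\tau$ of td (in the td-coordinate (\ref{eq:td_coor})) yields a polyhomogeneous conormal distribution $H^{(1)}_{\mathrm{td}}$ defined near td with $t(\partial_{t}-\Delta_{x})H^{(1)}_{\mathrm{td}}\in\Psi_{\mathrm{e-h}}^{\cdot,-\infty,\cdot,\cdot}$.

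Next I would handle ff. On ff the model operator (\ref{eq:leading_PDE_op}) is a harmonic-oscillator-type operator on the model half-space $\mathbb{R}^{n-1}_{\zeta'}\times\mathbb{R}_{\geq0}^{\xi_{n}}\times\mathbb{R}_{\geq 0}^{\eta_{n}}$, with $\xi_{n}=0$ corresponding to rf. Decomposing the form bundle by splitting into $\mathbf{t}$ and $\mathbf{n}$ components (as described after (\ref{eq:correspond})), the absolute Neumann condition becomes Neumann for $\mathbf{t}$ and Dirichlet for $\mathbf{n}$; the leading ff-coefficient is then obtained explicitly by the method of images:
\begin{align*}
N_{\mathrm{ff}}^{-n}(H^{(1)})_{\mathbf{t}}&=(4\pi)^{-n/2}e^{-|\zeta'|^{2}/4}\bigl(e^{-|\xi_{n}-\eta_{n}|^{2}/4}+e^{-|\xi_{n}+\eta_{n}|^{2}/4}\bigr)\otimes \Pi_{\mathbf{t}},\\
N_{\mathrm{ff}}^{-n}(H^{(1)})_{\mathbf{n}}&=(4\pi)^{-n/2}e^{-|\zeta'|^{2}/4}\bigl(e^{-|\xi_{n}-\eta_{n}|^{2}/4}-e^{-|\xi_{n}+\eta_{n}|^{2}/4}\bigr)\otimes \Pi_{\mathbf{n}},
\end{align*}
where $\Pi_{\mathbf{t}},\Pi_{\mathbf{n}}$ are the parallel-transported tangential/normal projections at the boundary point $(x',0)$. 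This expression is Schwartz in $(\zeta',\xi_{n},\eta_{n})$ away from rf/lf, extends smoothly across lf giving $E_{\mathrm{lf}}=\mathbb{N}_{0}\times\{0\}$, and the image-term structure produces exactly $E_{\mathrm{rf}}^{\mathbf{n}}\subset\mathbb{N}_{1}\times\{0\}$ (Dirichlet vanishes at $\xi_{n}=0$) and $E_{\mathrm{rf}}^{\mathbf{t}}\subset(\mathbb{N}_{0}\setminus\{1\})\times\{0\}$ (the odd reflection of the Neumann image kernel cancels the first-order term at rf). Higher-order ff-coefficients are built inductively by solving the half-space model equations with the same boundary conditions and vanishing-at-$\infty$ conditions in $(\zeta',\xi_{n},\eta_{n})$, which is a standard elliptic-boundary-value computation.

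The delicate step, and the main obstacle, is verifying the compatibility condition at $\mathrm{ff}\cap\mathrm{td}$ in the sense of Remark \ref{rem:compatibility}, so that the two expansions glue into a polyhomogeneous distribution on $M_{h}^{2}$. Using the coordinate match (\ref{eq:correspond}), as $\vartheta\to0$ in the fftd-coordinate (\ref{eq:fftd_coord}) we approach a point of td with td-coordinates $(0,(x',x_{n}),(\zeta',\zeta_{n}))$; in this limit, $\xi_{n}=\theta_{n}/\varsigma$ and $\eta_{n}=1/\varsigma$ both tend to $+\infty$, so the Dirichlet image term $e^{-|\xi_{n}+\eta_{n}|^{2}/4}$ vanishes to infinite order, and both $N_{\mathrm{ff}}^{-n}(H^{(1)})_{\mathbf{t}}$ and $N_{\mathrm{ff}}^{-n}(H^{(1)})_{\mathbf{n}}$ reduce to $(4\pi)^{-n/2}e^{-|\zeta|^{2}/4}\otimes\mathrm{Id}$, matching $N_{\mathrm{td}}^{-n}(H^{(1)})$ at the boundary point $(x',0)$ as required by (\ref{eq:compatibility}). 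The analogous match holds term-by-term for the higher coefficients because both expansions solve the same model ODE restricted to $\mathrm{ff}\cap\mathrm{td}$. Once compatibility is verified, a Borel summation on $M_{h}^{2}$ along both ff and td (using the standard Borel lemma for polyhomogeneous expansions on manifolds with corners, cf.\ \parencite{melroseAtiyahPatodiSingerIndexTheorem2018}) produces $H^{(1)}\in\Psi_{\mathrm{e-h}}^{-2,-2,E_{\mathrm{lf}},E_{\mathrm{rf}}}$. The initial condition $\lim_{t\downarrow0}H^{(1)}=\delta_{y}$ follows from the normalization of $N_{\mathrm{td}}^{-n}(H^{(1)})$ together with the off-diagonal decay at tf. Finally, $P^{(1)}=t(\partial_{t}-\Delta_{x})H^{(1)}$ has its td-expansion killed to all orders by construction (giving $-\infty$ at td), its leading ff-coefficient killed by the model equation (improving $-2$ to $-3$ at ff), and its behaviour at lf and rf is smooth because applying $t(\partial_{t}-\Delta_{x})$ preserves $E_{\mathrm{lf}},E_{\mathrm{rf}}\subset\mathbb{N}_{0}\times\{0\}$, so $P^{(1)}\in\Psi_{\mathrm{e-h}}^{-3,-\infty,\mathbb{N}_{0},\mathbb{N}_{0}}$ as claimed.
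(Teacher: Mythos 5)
Your proposal follows the same essential strategy as the paper: prescribe the leading td coefficient as the Euclidean Gaussian, prescribe the leading ff coefficient by the method of images (Neumann on $\mathbf{t}$, Dirichlet on $\mathbf{n}$), verify compatibility at $\mathrm{ff}\cap\mathrm{td}$, inductively solve away all td coefficients using the invertibility of the td model operator on Schwartz data, and Borel-sum. Your compatibility computation (the reflected term $e^{-|\xi_{n}+\eta_{n}|^{2}/4}$ vanishing to infinite order as $\vartheta\to0$, so both $\mathbf{t}$ and $\mathbf{n}$ components limit to the Gaussian in $(\sigma',\sigma_{n})$) is exactly the paper's check.

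Two points where you deviate, neither fatal but worth flagging. First, the paper structures the argument differently and more economically: it fixes a single polyhomogeneous $A\in\Psi_{\mathrm{e-h}}^{-2,-2,E_{\mathrm{lf}},E_{\mathrm{rf}}}$ with the two prescribed leading coefficients (existence from compatibility plus Borel's lemma), and then observes that the td coefficients $A_{j}^{\mathrm{td}}$ for $j>-n$ can be modified \emph{freely} without disturbing the rf or lf index sets, because td is disjoint from rf and lf (shown via the fftd-coordinate system). This sidesteps the higher-order compatibility at $\mathrm{ff}\cap\mathrm{td}$ that your ``build two expansions separately, then glue'' framing leaves hand-waved (``the analogous match holds term-by-term$\dots$'' is asserted, not verified). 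Second, you propose to solve the ff model equations to higher order ``by a standard elliptic-boundary-value computation.'' This is more than the claim requires — only $N_{\mathrm{ff}}^{-n}(P^{(1)})=0$ is needed, i.e.\ the single drop from $-2$ to $-3$ at ff — and it is not obviously a standard elliptic BVP: the ff normal operator in (\ref{eq:leading_PDE_op}) has $-\Delta_{(\zeta',\xi_{n})}(x',0)$ but only a first-order scaling term $-\tfrac12\eta_{n}\partial_{\eta_{n}}$ in the $\eta_{n}$-direction, so it is not elliptic in $\eta_{n}$. The paper deliberately postpones the ff improvement: higher-order terms at rf are killed in Claim~2 via $N_{\mathrm{rf}}^{j}(r^{2}(\partial_{t}-\Delta_{x}))=-j(j-1)$, and the remaining ff error is removed by the Volterra series, not by inverting the ff normal operator directly. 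If you drop the unnecessary ff higher-order step and keep only the prescription of the leading ff coefficient, your argument lines up with the paper's.
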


\begin{proof}
To prove this claim, we construct $A\in\Psi_{\mathrm{e-h}}^{-2,-2,E_{\mathrm{lf}},E_{\mathrm{rf}}}$
such that
\begin{align}
N_{\mathrm{td}}^{-n}\left(A\right)\left(x,\zeta\right) & =\left(\frac{1}{4\pi}\right)^{n/2}e^{-\frac{|\zeta|_{g\left(x\right)}^{2}}{4}}\mathrm{Id}=\left(\frac{1}{4\pi}\right)^{n/2}e^{-\frac{|\zeta|_{g\left(x\right)}^{2}}{4}}\left.dx^{I}\right|_{x}\otimes\left.\partial_{y^{I}}\right|_{x}\label{eq:int_leading}\\
N_{\mathrm{ff}}^{-n}\left(A\right)\left(x',\xi_{n},\zeta',\eta_{n}\right) & =\left(\frac{1}{4\pi}\right)^{n/2}e^{-\frac{|\zeta'|_{g\left(x',0\right)}^{2}}{4}}\left(e^{-\frac{1}{4}|\xi_{n}-\eta_{n}|^{2}}\left(\mathbf{t}+\mathbf{n}\right)+e^{-\frac{1}{4}|\xi_{n}+\eta_{n}|^{2}}\left(\mathbf{t}-\mathbf{n}\right)\right)\nonumber \\
 & =1_{n\notin I}\left(\frac{1}{4\pi}\right)^{n/2}e^{-\frac{|\zeta'|_{g\left(x',0\right)}^{2}}{4}}\left(e^{-\frac{1}{4}|\xi_{n}-\eta_{n}|^{2}}+e^{-\frac{1}{4}|\xi_{n}+\eta_{n}|^{2}}\right)\left.dx^{I}\right|_{\left(x',0\right)}\otimes\left.\partial_{y^{I}}\right|_{\left(x',0\right)}\nonumber \\
 & \;\;\;\;\;\;\;\;\;+1_{n\in I}\left(\frac{1}{4\pi}\right)^{n/2}e^{-\frac{|\zeta'|_{g\left(x',0\right)}^{2}}{4}}\left(e^{-\frac{1}{4}|\xi_{n}-\eta_{n}|^{2}}-e^{-\frac{1}{4}|\xi_{n}+\eta_{n}|^{2}}\right)\left.dx^{I}\right|_{\left(x',0\right)}\otimes\left.\partial_{y^{I}}\right|_{\left(x',0\right)}\nonumber 
\end{align}

This choice satisfies the compatibility condition from (\ref{eq:compatibility})
(with $N_{\mathrm{td}}^{-n}\left(A\right)=N_{\mathrm{td}}^{0}\left(t^{\frac{n}{2}}A\right)$
and $N_{\mathrm{ff}}^{-n}\left(A\right)=N_{\mathrm{ff}}^{0}\left(t^{\frac{n}{2}}A\right)$),
since 
\[
e^{-\frac{1}{4}|\sigma'|_{g\left(y',0\right)}^{2}}\left(e^{-\frac{1}{4}|\sigma_{n}|^{2}}\left(\mathbf{t}+\mathbf{n}\right)+e^{-\frac{1}{4}|\sigma_{n}+\frac{2}{\vartheta}|^{2}}\left(\mathbf{t}-\mathbf{n}\right)\right)\xrightarrow{\vartheta\to0}e^{-\frac{1}{4}|\sigma'|_{g\left(y',0\right)}^{2}}\left(e^{-\frac{1}{4}|\sigma_{n}|^{2}}\right)=e^{-\frac{1}{4}|\left(\sigma',\sigma_{n}\right)|_{g\left(y',0\right)}^{2}}
\]

We note that $A$ is smooth on $\left(0,\infty\right)\times M\times M$,
and we can make $A$ have the same index set for rf as $N_{\mathrm{ff}}^{-n}\left(A\right)$,
therefore satisfying the absolute Neumann condition. Off-diagonal
decay is also explicit from these formulas (when $x\neq y$ stay fixed
and $t\to0$, we have $\zeta=\frac{x-y}{\sqrt{t}}\to\infty$).

By direct calculations, we see that $N_{\mathrm{ff}}^{-n}\left(t\left(\partial_{t}-\Delta_{x}\right)A\right)=0$
and $N_{\mathrm{td}}^{-n}\left(t\left(\partial_{t}-\Delta_{x}\right)A\right)=0$.
Therefore $t\left(\partial_{t}-\Delta_{x}\right)A\in\Psi_{\mathrm{e-h}}^{-3,-3,\mathbb{N}_{0},\mathbb{N}_{0}}$.
We then observe two facts:
\begin{itemize}
\item In the expansion of $A$ at td, $A_{j}^{\mathrm{td}}$ for $j>-n$
can be freely changed. 
\item For any smooth $f\left(x,\zeta\right)$ that is Schwartz in $\zeta$
(rapidly decaying) and $j\geq1$, there is a unique $F\left(x,\zeta\right)$
rapidly decaying in $\zeta$ such that 
\[
N_{\mathrm{td}}^{-n+j}\left(t\left(\partial_{t}-\Delta_{x}\right)\right)F\left(x,\zeta\right)=f\left(x,\zeta\right)
\]
In particular, by using the Fourier transform $\zeta\mapsto z$ (with
the convention $\widehat{F}(z)=\int_{\mathbb{R}^{n}}F(\zeta)e^{-i2\pi\zeta\cdot z}\;\mathrm{d}\zeta$):
\[
\widehat{F}\left(x,z\right)=\int_{0}^{1}\mathrm{d}s\;2s^{j-1}\hat{f}\left(x,sz\right)e^{-\left(4\pi^{2}\right)\left(1-s^{2}\right)\left|z\right|_{g(x)}^{2}}
\]
See also \parencite[Section 6.2]{albinLinearAnalysisManifolds2017}
for an explanation of this. It boils down to the fact that $\Delta_{\zeta}$
is smoothing (elliptic) for $\zeta$. 
\end{itemize}
Therefore it is possible to change $\left(A_{j}^{\mathrm{td}}\right)_{j>-n}$to
make $t\left(\partial_{t}-\Delta_{x}\right)A$ vanish to infinite
order at td. It boils down to solving
\[
N_{\mathrm{td}}^{j}\left(t\left(\partial_{t}-\Delta_{x}\right)\right)A_{j}^{\mathrm{td}}=B_{j},\;j>-n
\]
where $B_{j}$ is an inhomogeneous term depending on $A_{-n}^{\mathrm{td}},...,A_{j-1}^{\mathrm{td}}$.
Changing $\left(A_{j}^{\mathrm{td}}\right)_{j>-n}$ will not affect
the index set of $A$ at rf, since td does not intersect rf and lf,
by the above reasoning with (\ref{eq:fftd_coord}). $A$ is smooth
at rf and lf, and we therefore obtain $t\left(\partial_{t}-\Delta_{x}\right)A\in\Psi_{\mathrm{e-h}}^{-3,-\infty,\mathbb{N}_{0},\mathbb{N}_{0}}$. 

We finally note that $\lim_{t\downarrow0}A\left(t,x,y\right)=\delta_{y}\left(x\right)$
due to (\ref{eq:int_leading}), which is the ``universal'' formula
for the expansion of heat kernels in the interior of manifolds. The
claim is then proven. We refer to \parencite[Proposition 3.2]{mazzeoAnalyticTorsionManifolds2013}
for more details. 
\end{proof}
So we have solved away the leading coefficient of $t\left(\partial_{t}-\Delta_{x}\right)H$
at ff, as well as all the coefficients at td.

Next, we solve away all the coefficients at rf.
\begin{claim}
There is an element $H^{\left(2\right)}\in\Psi_{\mathrm{e-h}}^{-2,-2,E_{\mathrm{lf}},E_{\mathrm{rf}}}\left(M;\Lambda^{k}M\right)$
such that 
\[
\left\{ \begin{array}{rl}
P^{\left(2\right)} & :=t\left(\partial_{t}-\Delta_{x}\right)H^{\left(2\right)}\in\Psi_{\mathrm{e-h}}^{-3,-\infty,\mathbb{N}_{0},\emptyset}\\
\lim_{t\downarrow0}H^{\left(2\right)}\left(t,x,y\right) & =\delta_{y}\left(x\right)
\end{array}\right.
\]
\end{claim}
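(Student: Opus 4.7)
The plan is to mimic the iterative construction from the previous claim, but now targeting the right face rf instead of the time diagonal td. The key geometric input is that rf does not meet td (as established via the fftd-coordinate system \eqref{eq:fftd_coord}), so corrections introduced to kill the rf-expansion of $P^{(1)}$ can be chosen supported in a collar of rf disjoint from td, and hence will not disturb the infinite-order vanishing of $P^{(1)}$ at td. The absolute Neumann boundary conditions, encoded in $E_{\mathrm{rf}}$, play the role at rf that the interior Gaussian profile played at td: they select the unique admissible inverse of the model problem at each order.

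First I would identify the model problem at rf. In the rf-coordinates \eqref{eq:rf_coord}, a direct calculation gives
\[
L := t(\partial_t - \Delta_x) = T\partial_T - T\Delta_{(\theta',\theta_n)}(y',0) + O(y_n),
\]
and the model equation at rf is the leading parabolic equation on the half-line $\{\theta_n \geq 0\}$ (with $T$ as time and $(\theta', y', y_n)$ as parameters), subject to the Dirichlet condition on the normal component and the Neumann condition on the tangential component imposed by $E_{\mathrm{rf}}$. Since $P^{(1)}$ is smooth at rf with index set $\mathbb{N}_0$, I expand $P^{(1)} \sim \sum_{j \geq 0} P^{(1)}_j\, \theta_n^j$ and iteratively produce corrections $A_j \in \Psi_{\mathrm{e-h}}^{-2,-2,E_{\mathrm{lf}},E_{\mathrm{rf}}}$, each concentrated in a collar of rf disjoint from td, such that $L A_j$ cancels the $j$-th rf-coefficient of the residual error. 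The inversion at each order is carried out by the same reflection trick already used in \eqref{eq:int_leading} for the ff-profile.

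After constructing $(A_j)_{j \geq 0}$, I would invoke a Borel-summation argument on the manifold with corners $M_h^2$ to assemble a single polyhomogeneous $A \in \Psi_{\mathrm{e-h}}^{-2,-2,E_{\mathrm{lf}},E_{\mathrm{rf}}}$ whose rf-Taylor series matches $\sum_j A_j \theta_n^j$. Setting $H^{(2)} := H^{(1)} - A$ then gives $P^{(2)} = L H^{(2)} \in \Psi_{\mathrm{e-h}}^{-3,-\infty,\mathbb{N}_0,\emptyset}$, and the initial condition $\lim_{t\downarrow 0} H^{(2)}(t,x,y) = \delta_y(x)$ is preserved, because each $A_j$ vanishes to infinite order at td.

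The main obstacle will be ensuring the compatibility condition of Remark~\ref{rem:compatibility} at $\mathrm{ff}\cap\mathrm{rf}$: every correction $A_j$ must have an ff-trace compatible with the leading ff-profile of $H^{(1)}$ already fixed in \eqref{eq:int_leading}. This pins down a unique choice of inverse at each order and forces the expansion to respect the precise structure of $E_{\mathrm{rf}}$ (namely $E_{\mathrm{rf}}^{\mathbf{n}} \subset \mathbb{N}_1 \times \{0\}$ and $E_{\mathrm{rf}}^{\mathbf{t}} \subset (\mathbb{N}_0 \setminus \{1\}) \times \{0\}$), so that no constant term appears in the normal block and no linear term in $\theta_n$ appears in the tangential block. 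Once this matching is verified, the remainder of the argument is parallel to the proof of the previous claim.
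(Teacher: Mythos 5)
The plan (correct $H^{(1)}$ iteratively to kill its rf-expansion while leaving td untouched, then Borel-sum) is the right strategy, but the model problem you set up at rf is the wrong one, and this would derail the iteration.

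First, $L = t(\partial_t - \Delta_x)$ is not a $b$-operator at rf. In the rf-coordinates \eqref{eq:rf_coord} the boundary-defining function for rf is $\theta_n$, and your formula $L = T\partial_T - T\Delta_{(\theta',\theta_n)} + O(y_n)$ contains $T\partial_{\theta_n}^2$, which is a transversal (not $b$-) derivative in $\theta_n$, so $L$ does not restrict to a normal operator on rf. The paper instead conjugates by $r^2(\partial_t - \Delta_x)$ with $r(x) = x_n = y_n\theta_n$ near rf; then $r^2\partial_{x_n}^2 = \theta_n^2\partial_{\theta_n}^2$, which \emph{is} a $b$-operator in $\theta_n$, and it is simultaneously a $b$-operator at ff because $r = \tau\xi_n$ in the ff-coordinates \eqref{eq:ff_coor}.

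Second, and more importantly, the normal operator at rf is not a half-line parabolic equation to be inverted by a reflection trick. The reflection trick governs the \emph{ff} model (the half-space heat kernel), which was already fixed in \Corref{equiv_cN}\,'s predecessor step via \eqref{eq:int_leading}; at rf the normal operator is the \emph{indicial} operator, which acts on the $j$-th Taylor coefficient in $\theta_n$ as multiplication by the scalar $N_{\mathrm{rf}}^j(r^2(\partial_t-\Delta_x)) = -j(j-1)$. Thus the iteration at rf is purely algebraic: at each order $j \geq 2$ you solve $-j(j-1)J_j^{\mathrm{rf}} = C_j$ by dividing by a nonzero constant. There is no parabolic PDE and no boundary condition to impose at this stage.

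This uncovers the actual subtlety your proposal misses: the indicial roots at rf are $j = 0$ and $j = 1$, where $-j(j-1) = 0$. The iteration can only start at $j = 2$ because $B_0^{\mathrm{rf}} = B_1^{\mathrm{rf}} = 0$, which in turn holds precisely because $H^{(1)}$ already has index set $E_{\mathrm{rf}}$ at rf (the absolute Neumann condition), built in at the ff step via the reflection trick. If those first two coefficients were nonzero you could not solve. This solvability condition is where $E_{\mathrm{rf}}$ enters the rf argument, not through a boundary-value problem at each order. Likewise, the compatibility at $\mathrm{ff}\cap\mathrm{rf}$ is established in the paper not by "pinning down a unique admissible inverse" but by checking that the index sets of the correction $J$ at ff and td sit strictly above $-n$ (the powers of $y_n$ and $T$ never decrease in the induction), so the leading profiles $N_{\mathrm{ff}}^{-n}$ and $N_{\mathrm{td}}^{-n}$ of $H^{(1)}$ are automatically preserved in $H^{(2)} = H^{(1)} - J$.
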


\begin{proof}
Let $r(x)$ be a boundary-defining function for rf such that $r(x)=\mathrm{dist}\left(x,\partial M\right)=x_{n}$
near rf. We observe that $r^{2}\left(\partial_{t}-\Delta_{x}\right)$
is a $b$-operator which can be restricted to rf (defined by $\theta_{n}=0$
in the rf-coordinate system from (\ref{eq:rf_coord})). On the other
hand, in the ff-coordinate system from (\ref{eq:ff_coor}), $r=\tau\xi_{n}$,
so $r^{2}\left(\partial_{t}-\Delta_{x}\right)$ is also a $b$-operator
w.r.t. ff. 

We observe that $\left(\partial_{t}-\Delta_{x}\right)H^{\left(1\right)}\in\Psi_{\mathrm{e-h}}^{-1,-\infty,\mathbb{N}_{0},\mathbb{N}_{0}}$
and we want to obtain $\left(\partial_{t}-\Delta_{x}\right)H^{\left(2\right)}\in\Psi_{\mathrm{e-h}}^{-1,-\infty,\mathbb{N}_{0},\emptyset}$.
Therefore it is enough to find $J\in\Psi_{\mathrm{e-h}}^{-3,-3,E_{\mathrm{lf}},E_{\mathrm{rf}}}\left(M;\Lambda^{k}M\right)$
such that $r^{2}\left(\partial_{t}-\Delta_{x}\right)\left(H^{\left(1\right)}-J\right)$
vanishes to infinite order at rf.

Let $B=r^{2}\left(\partial_{t}-\Delta_{x}\right)H^{\left(1\right)}\in\Psi_{\mathrm{e-h}}^{-3,-\infty,\mathbb{N}_{0},\mathbb{N}_{0}+2}$.
We note that $B_{0}^{\mathrm{rf}}=B_{1}^{\mathrm{rf}}=0$, so it is
fine to set $J_{0}^{\mathrm{rf}}=J_{1}^{\mathrm{rf}}=0$. 

Recall that $\Delta_{x}=\sum_{ij}g^{ij}\left(x\right)\partial_{x_{i}}\partial_{x_{j}}+\sum_{i}b_{i}\partial_{x_{i}}+c$
where $b_{i},c$ are smooth. Then by translating $r^{2}\left(\partial_{t}-\Delta_{x}\right)$
into rf-coordinates, we have to solve the formal expansion at rf:
\begin{equation}
\theta_{n}^{2}\left(\partial_{T}-\sum_{i,j\neq n}g^{ij}\partial_{\theta_{i}}\partial_{\theta_{j}}-\sum_{i\neq n}y_{n}b_{i}\partial_{\theta_{i}}-\partial_{\theta_{n}}^{2}-y_{n}b_{n}\partial_{\theta_{n}}-cy_{n}^{2}\right)\left(\sum_{j\geq2}J_{j}^{\mathrm{rf}}\theta_{n}^{j}\right)=\sum_{j\geq2}B_{j}^{\mathrm{rf}}\theta_{n}^{j}\label{eq:explicit}
\end{equation}
Note that near rf, because we have chosen the geodesic normal coordinates,
$g^{in}=\delta^{in}$ for any $i\in\{1,...,n\}$. Then, (\ref{eq:explicit})
boils down to solving 
\[
N_{\mathrm{rf}}^{j}\left(r^{2}\left(\partial_{t}-\Delta_{x}\right)\right)J_{j}^{\mathrm{rf}}\left(T,\theta',y',y_{n}\right)=C_{j}\left(T,\theta',y',y_{n}\right),\;j\geq2
\]
where
\begin{itemize}
\item $N_{\mathrm{rf}}^{j}\left(r^{2}\left(\partial_{t}-\Delta_{x}\right)\right)=-j\left(j-1\right),\;j\geq1$. 
\item $C_{j}$ is an inhomogeneous term depending on $B_{j}^{\mathrm{rf}}$
and $J_{2}^{\mathrm{rf}},...,J_{j-1}^{\mathrm{rf}}$. In particular,
$C_{2}=B_{2}^{\mathrm{rf}}$. 
\end{itemize}
Solving this is trivial (and in fact the solutions are unique), since
for $j\geq2$, $N_{\mathrm{rf}}^{j}\left(r^{2}\left(\partial_{t}-\Delta_{x}\right)\right)$
is a nonzero constant. We note that $\left(J_{j}^{\mathrm{rf}}\right)_{j\geq2}$
inherits many properties from $\left(B_{j}^{\mathrm{rf}}\right)_{j\geq2}$
by induction:
\begin{itemize}
\item In the rf-coordinate system, $B_{j}^{\mathrm{rf}}$ is defined from
$\frac{1}{j!}\left.\partial_{\theta_{n}}^{j}\right|_{\theta_{n}=0}B$
(abuse of notation). But $y_{n}$ is the defining function for ff,
so the index set of $B_{j}^{\mathrm{rf}}$ at ff is the same as that
of $B$, and therefore this is also true for $J_{2}^{\mathrm{rf}}$.
This extends to $J_{j}^{\mathrm{rf}}$ $\forall j\geq2$, because
we can explicitly derive $C_{j}$ from (\ref{eq:explicit}), and see
that the powers of $y_{n}$ never get lowered (no $\partial_{y_{n}}$
or $\frac{1}{y_{n}}$).
\item The index sets of $B$ at td and tf are empty (i.e. $B=O\left(T^{\infty}\right)$
as $T\to0$), which implies $J_{j}^{\mathrm{rf}}=O\left(T^{\infty}\right)$.
\end{itemize}
Note that we also have to solve for $J_{j}^{\mathrm{rf}}$ where $y$
is away from the boundary (which means there is no rf-coordinate system).
In that case, we use the ts-coordinate system and solve the formal
expansion at rf. This proceeds in the same fashion (but it is even
simpler, since we are far away from ff).

Consequently, constructing $J$ from $\left(J_{j}^{\mathrm{rf}}\right)_{j\geq0}$
will give us $J\in\Psi_{\mathrm{e-h}}^{-3,-\infty,\mathbb{N}_{0},\mathbb{N}_{0}+2}\left(M;\Lambda^{k}M\right)$
such that $B-r^{2}\left(\partial_{t}-\Delta_{x}\right)J$ vanishes
to infinite order at rf.

With the index set at rf being $\mathbb{N}_{0}+2$, $J$ satisfies
the absolute Neumann boundary condition. Also, because the index sets
of $J$ at ff and td are higher than those of $H^{\left(1\right)}$,
we conclude 
\[
\left\{ \begin{array}{rl}
N_{\mathrm{ff}}^{-n}\left(H^{\left(1\right)}-J\right) & =N_{\mathrm{ff}}^{-n}\left(H^{\left(1\right)}\right)\\
N_{\mathrm{td}}^{-n}\left(H^{\left(1\right)}-J\right) & =N_{\mathrm{td}}^{-n}\left(H^{\left(1\right)}\right)
\end{array}\right.
\]
By setting $H^{\left(2\right)}=H^{\left(1\right)}-J$, the claim is
proven.
\end{proof}
For the last step, we consider the formal Volterra series:
\[
H=H^{\left(2\right)}+H^{\left(2\right)}*R^{\left(2\right)}+H^{\left(2\right)}*R^{\left(2\right)}*R^{\left(2\right)}+....
\]
where $R^{\left(2\right)}:=-\left(\partial_{t}-\Delta_{x}\right)H^{\left(2\right)}\in\Psi_{\mathrm{e-h}}^{-1,-\infty,\mathbb{N}_{0},\emptyset}$,
and the composition $A*B$ is defined by
\[
A*B\left(t,x,y\right)=\int_{0}^{t}\mathrm{d}s\int_{M}\mathrm{d}\mathrm{vol}_{g}\left(z\right)\;A(t-s,x,z)B(s,z,y)
\]
 By \parencite[Theorem 5.3]{mazzeoAnalyticTorsionManifolds2013},
if $Q_{\mathrm{lf}}+Q'_{\mathrm{rf}}>-1$; $\alpha,\gamma,\beta\in-\mathbb{N}_{1}$,
we have the formula
\[
\Psi_{\mathrm{e-h}}^{\alpha,\gamma,Q_{\mathrm{lf}},Q_{\mathrm{rf}}}*\Psi_{\mathrm{e-h}}^{\beta,-\infty,Q'_{\mathrm{lf}},Q'_{\mathrm{rf}}}\subset\Psi_{\mathrm{e-h}}^{\alpha+\beta,-\infty,P_{\mathrm{lf}},P_{\mathrm{rf}}}
\]
 where $P_{\mathrm{lf}}=Q'_{\mathrm{lf}}\overline{\cup}\left(Q_{\mathrm{lf}}-\beta\right)$;
$P_{\mathrm{rf}}=Q_{\mathrm{rf}}\overline{\cup}\left(Q'_{\mathrm{rf}}-\alpha\right)$.
This means that for $N\in\mathbb{N}_{1}:$
\[
H^{\left(2\right)}*\left(R^{\left(2\right)}\right)^{*N}\in\Psi_{\mathrm{e-h}}^{-2-N,-\infty,E_{\mathrm{lf},N},E_{\mathrm{rf}}}
\]
where $E_{\mathrm{lf,N}}$ is defined inductively by $E_{\mathrm{lf},1}=\mathbb{N}_{0}\overline{\cup}\left(\mathbb{N}_{0}+1\right)$
and $E_{\mathrm{lf},N+1}=\mathbb{N}_{0}\overline{\cup}\left(E_{\mathrm{lf},N}+1\right)$
for $N\geq1$. 

Letting $\mathbb{N}_{j}=\left\{ x\in\mathbb{N}:x\geq j\right\} $
and $\mathscr{N}=\overline{\bigcup}_{j\in\mathbb{N}_{0}}\mathbb{N}_{j}$,
we conclude that 
\[
\forall N:E_{\mathrm{lf},N}\subset\mathscr{N}=\{\left(x,y\right)\in\mathbb{N}_{0}^{2}:y\leq x\}
\]
 which is a well-defined index set. 

A common property of Volterra series is that they converge. We can
observe this from the fact that $\forall m\in\mathbb{N}_{2},$ $L^{*m}\left(t,x,y\right)$
is equal to
\[
\int_{M^{m-1}}\mathrm{d}\mathrm{vol}_{g}^{m-1}\left(z_{1},...,z_{m-1}\right)\int_{\Delta_{m-1}^{t}}\mathrm{d}\left(s_{1},...,s_{m-1}\right)\;L\left(t-s_{1}-...-s_{m-1},x,z_{m-1}\right)...L\left(s_{1},z_{1},y\right)
\]
where $\Delta_{m-1}^{t}$ is the simplex defined by $\left\{ 0\leq s_{1}\leq s_{1}+s_{2}\leq...\leq s_{1}+...+s_{m-1}\leq t\right\} $.
As the volume of $\Delta_{m-1}^{t}$ is $\frac{t^{m-1}}{\left(m-1\right)!}$,
the factorial factor $\frac{1}{\left(m-1\right)!}$ ultimately forces
strong convergence as $m\to\infty$. See \parencite[Section 2.4]{berlineHeatKernelsDirac2004},
\parencite[Section 3.2]{mazzeoAnalyticTorsionManifolds2013}, and
\parencite{melroseAtiyahPatodiSingerIndexTheorem2018} for more details
and estimates.

Consequently, we obtain $H\in\Psi_{\mathrm{e-h}}^{-2,-2,\mathscr{N},E_{\mathrm{rf}}}$.
Because of the identity $\left(\partial_{t}-\Delta_{x}\right)\left(H^{\left(2\right)}*\left(R^{\left(2\right)}\right)^{*N}\right)=\left(R^{\left(2\right)}\right)^{*N}-\left(R^{\left(2\right)}\right)^{*\left(N+1\right)},$
we conclude
\[
\left(\partial_{t}-\Delta_{x}\right)H=0
\]
Let us check that $H$ is the true Hodge-Neumann heat kernel.
\begin{itemize}
\item The index set $E_{\mathrm{rf}}$ satisfies the Neumann boundary condition.
\item For any $u\in L^{2}\left(M;\Lambda^{k}M\right)$: 
\[
H(t)u(x):=\int_{M}H\left(t,x,y\right)u(y)\;\mathrm{d}\mathrm{vol_{g}}y\in C^{\infty}\left(\left(0,\infty\right),\Omega_{\mathrm{hom}N}^{k}\right)
\]
and satisfies $\left(\partial_{t}-\Delta_{x}\right)\left(H(t)u(x)\right)=0$
on $\{t>0\}$. In particular, $H\left(t\right)\in\mathrm{End}\left(L^{2}\right)$
for all $t>0$ and 
\begin{equation}
\partial_{t}\left(\left\Vert H(t)u\right\Vert _{L^{2}}^{2}\right)\leq0\label{eq:decrea}
\end{equation}
 because the Neumann Laplacian $\widetilde{\Delta_{N}}$ is self-adjoint
and dissipative.
\item We have $N_{\mathrm{td}}^{-n}\left(H\right)=N_{\mathrm{td}}^{-n}\left(H^{\left(1\right)}\right)$,
therefore $\lim_{t\downarrow0}H\left(t,x,y\right)=\delta_{y}\left(x\right)$.
For any $u\in\Omega_{00}^{k}\left(M\right):H(t)u\xrightarrow[t\downarrow0]{L^{2}}u$,
which, along with (\ref{eq:decrea}), implies $\left\Vert H(t)u\right\Vert _{L^{2}}\leq\left\Vert u\right\Vert _{L^{2}}$
. By density, we conclude the same for $u\in L^{2}\left(M;\Lambda^{k}M\right)$.
Recall that $e^{t\widetilde{\Delta_{N}}}$ is the heat semigroup defined
by functional analysis. For any $u\in L^{2}\left(M;\Lambda^{k}M\right)$,
$U(t):=H(t)u-e^{t\widetilde{\Delta_{N}}}u$ is a $C_{t}^{0}L_{x}^{2}$
solution of
\[
\left\{ \begin{array}{rl}
\left(\partial_{t}-\Delta_{x}\right)U(t,x)= & 0\;\forall t>0\\
U(t)\xrightarrow[t\downarrow0]{L^{2}} & 0
\end{array}\right.
\]
By an energy argument just like (\ref{eq:decrea}), we must have $U\left(t\right)=0$
for all $t$. Then, $H(t)=e^{t\widetilde{\Delta}_{N}}.$
\end{itemize}
So $H$ is the true heat kernel, which must be smooth on $\left(0,\infty\right)\times M\times M$
by standard parabolic theory. Another way to see this is that the
heat kernel must be symmetric, therefore smoothness in $x$ implies
smoothness in $y$. Either way, because we have smoothness, there
are no log terms on lf, and we conclude $H\in\Psi_{\mathrm{e-h}}^{-2,-2,\mathbb{N}_{0},E_{\mathrm{rf}}}$.

\subsection{Relevant properties}

We extract some key properties from \Thmref{heat_calc} that we need
for this paper, and write them in a language more familiar with analysts.
\begin{enumerate}
\item (\textbf{off-diagonal decay}) For any multi-index $\gamma$ and $x\neq y$,
\begin{equation}
D_{t,x,y}^{\gamma}H\left(t,x,y\right)=O\left(t^{\infty}\right)\label{eq:1st_decay}
\end{equation}
 as $t\downarrow0$, locally uniform in $\left(x,y\right)\notin\Delta\left(M\right)$.
\item (\textbf{interior blow-up}) For $x\in\mathrm{int}\left(M\right)$,
locally in projective coordinates $\left(\tau,x,\zeta\right)=\left(\sqrt{t},x,\frac{x-y}{\sqrt{t}}\right)$,
with $\widetilde{H}$ being the pullback of $t^{\frac{n}{2}}H$ in
these coordinates, we have 
\begin{enumerate}
\item $\widetilde{H}$ smooth in $\tau,x,\zeta$, up to $\{\tau=0\}$.
\item (\textbf{rapid decay}) For any multi-index $\gamma$ and bounded $\tau$:
\begin{equation}
D_{\tau,x,\zeta}^{\gamma}\widetilde{H}\left(\tau,x,\zeta\right)=O\left(\left\langle \zeta\right\rangle ^{-\infty}\right)\label{eq:2nd_Decay}
\end{equation}
\end{enumerate}
\end{enumerate}
\begin{rem}
Both (\ref{eq:1st_decay}) and (\ref{eq:2nd_Decay}) come from the
empty index set at tf. We also refer to \parencite[Section 2.3.3]{kottkeLinearAnalysisManifolds2016a}
for an explanation of (\ref{eq:2nd_Decay}).

There are more specific properties from \Thmref{heat_calc}, which
we do not currently need.
\end{rem}

\printbibliography
\end{document}